\newtheorem {theorem}{Theorem}
\newtheorem {lemma}[theorem]{Lemma}
\newtheorem {proposition}[theorem]{Proposition}
\newtheorem {corollary}[theorem]{Corollary}
\newtheorem {conjecture}[theorem]{Conjecture}
\newtheorem {definition}[theorem]{Definition}
\newtheorem {question}[theorem]{Question}
\newtheorem {problem}[theorem]{Problem}
\theoremstyle{remark}
\newtheorem {claim}[theorem]{Claim}
\newtheorem {remark}[theorem]{Remark}
\numberwithin{equation}{section}
\numberwithin{theorem}{section}
\newcommand{\HFred}{HF_{\text{red}}}
\newcommand{\HFhat}{\widehat{HF}}
\DeclareMathOperator{\rank}{rank}
\newcommand{\comments}[1]{}
\newcommand{\spinc}{\mathfrak{s}}
\DeclareMathOperator{\coker}{coker}
\DeclareMathOperator{\lcm}{lcm}
\title{Rank inequalities for the Heegaard Floer homology of Seifert homology spheres}
\author{\c{C}a\u gr\i\; Karakurt and Tye\; Lidman}
\date{}
\begin{document}
\maketitle

\begin{abstract}
We establish three rank inequalities for the reduced flavor of Heegaard Floer homology of Seifert fibered integral homology spheres.  Combining these inequalities with the known classifications of non-zero degree maps between Seifert fibered spaces, we prove that a map $f:Y' \to Y$ between Seifert homology spheres yields the inequality $ |\deg f| \rank \HFred(Y)  \leq \rank \HFred(Y')$.  These inequalities are also applied in conjunction with an algorithm of N\'emethi to give a method to solve the botany problem for the Heegaard Floer homology of these manifolds.
\end{abstract}

\section{Introduction}
In the past several years, a great deal of progress has been made in the combinatorial description of the Heegaard Floer invariants of various objects in low-dimensional topology.  Many such algorithms come from the use of special Heegaard diagrams representing the objects in question, beginning with the advent of nice diagrams for closed $3$-manifolds in \cite{SW}.  This idea was used for knots and links in $S^3$ \cite{MOS, MOST}, which was then extended in \cite{MOT} to give a completely combinatorial description of the Heegaard Floer homology of closed three-manifolds and the Ozsv\'ath-Szab\'o invariants of closed, smooth $4$-manifolds with $b^+_2\geq 2$. 

\vspace{0.2cm}
   
In the case of Seifert fibered spaces, there is another combinatorial method to compute Heegaard Floer homology more effectively. This program was initiated by Ozsv\'ath and Szab\'o in their beautiful  paper \cite{OS1}, where they showed, by incorporating adjunction relations \cite{OS3}, that the Heegaard Floer homology of a $3$-manifold which bounds a certain type of plumbing in fact depends only on the intersection form of the plumbed $4$-manifold. They also found an algorithm which calculates the subgroup which is the kernel of the $U$ map on the plus version of  Heegaard Floer homology. Ozsv\'ath and Szab\'o's  result was extended by N\'emethi in \cite{N} which resulted in a fast algorithm calculating the full Heegaard Floer homology of any $3$-manifold which bounds a so--called almost rational plumbing. In the special case where the $3$-manifold is a Seifert  fibered integer homology sphere (or for short Seifert homology sphere), Can  and  the first author reformulated N\'emethi's algorithm in terms of a semigroup which is generated by a combination of Seifert invariants \cite{CK}. Despite this progress, no  closed formula is known  for the Heegaard Floer homology of Seifert homology spheres in terms of their Seifert invariants.  

\vspace{0.2cm}

The main purpose of the present article is to develop some combinatorial tools to compare the  Heegaard Floer homologies of two given Seifert  homology spheres without actually calculating them explicitly. We shall prove three  rank inequalities using these tools. Two of these inequalities arise from some geometric instances, namely the existence of certain kinds of maps between the spaces, but we never directly use these maps in our argument. Presumably one can prove  more general rank inequalities  (or possibly slightly different versions)  by incorporating these maps with a compatible version of Floer homology.  This strategy has been carried out for certain types of covering maps \cite{LM, LT}.  

\newpage

We start by stating the claimed inequalities. Henceforth $\Sigma(p_1,\ldots,p_l)$ denotes the Seifert homology sphere corresponding to a given $l$-tuple of pairwise relatively prime positive integers $(p_1,\ldots,p_l)$ with $p_i\geq 2$ for all $i=1,\dots,l$. Recall that the reduced version of Heegaard Floer homology  $\HFred(\Sigma(p_1,\ldots,p_l))$ is a finitely generated abelian group \cite{OS4}, and hence it has a well-defined rank.  We also recall that $S^3$ is the only Seifert homology sphere with $l \leq 2$ and that $\HFred(S^3) = 0$.

\vspace{0.2cm}

\subsection{Rank inequalities}

The first rank inequality concerns a particular type of branched cover between Seifert homology spheres. Let $Y = \Sigma(p_1,\ldots,p_l)$.  Fix $n \in \mathbb{Z}$ relatively prime to $p_1,\ldots,p_{l-1}$ and let $Y' = \Sigma(p_1,\ldots,p_{l-1},n p_l)$. The manifold $Y'$ is the $n$-fold cyclic branched cover of $Y$ branched along the singular fiber of order $p_l$. 

\vspace{0.2cm}

\begin{theorem}\label{thm:fiberbranched} (Rank inequality for branched covers along singular fibers)
 We have 
\begin{equation}\label{eq:fiberbranched}
n\left [ \rank \HFred(\Sigma(p_1,\ldots,p_l)) \right ] \leq \rank \HFred(\Sigma(p_1,\ldots,p_{l-1},n p_l)).  
\end{equation}

\end{theorem}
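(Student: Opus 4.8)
The plan is to run everything through N\'emethi's algorithm in the semigroup form of Can--Karakurt, and to reduce the inequality to a single generating-function identity. To $Y=\Sigma(p_1,\dots,p_l)$ with $p=p_1\cdots p_l$ and $g_i=p/p_i$ one attaches the numerical semigroup $\langle g_1,\dots,g_l\rangle$ and a tau function $\tau_Y\colon\mathbb{Z}_{\ge0}\to\mathbb{Z}$, with $\tau_Y(0)=0$ and increments $\tau_Y(k+1)-\tau_Y(k)=1-\delta_Y(k)$ for a nonnegative counting function $\delta_Y$. N\'emethi's graded root identifies $HF^+(Y)$ with the lattice homology of $\tau_Y$, which is supported in even gradings; hence there is no cancellation and $\rank\HFred(Y)$ is exactly the total length of the finite bars of $\tau_Y$ (equivalently, its total ascent corrected by its maximum). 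I would first fix this combinatorial reformulation on both sides, so that the theorem becomes an inequality between two explicitly defined integer sequences.

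The engine is the following identity. Packaging the Seifert data of $Y$ as the weighted-complete-intersection Poincar\'e series
\[
P_Y(t)=\frac{(1-t^{p})^{\,l-2}}{\prod_{i=1}^{l}(1-t^{g_i})},
\]
and passing to $Y'=\Sigma(p_1,\dots,p_{l-1},np_l)$, one has $p'=np$ and $g_i'=p'/p_i=n g_i$ for $i<l$, while $g_l'=p'/(np_l)=g_l$ is unchanged. Substituting and cancelling gives
\[
P_{Y'}(t)=\frac{(1-t^{np})^{\,l-2}}{\bigl(\prod_{i<l}(1-t^{n g_i})\bigr)(1-t^{g_l})}=P_Y(t^{n})\cdot\bigl(1+t^{g_l}+t^{2g_l}+\cdots+t^{(n-1)g_l}\bigr).
\]
Because $g_l=p_1\cdots p_{l-1}$ is coprime to $n$ by hypothesis, the $n$ exponents $0,g_l,\dots,(n-1)g_l$ occupy all residues modulo $n$; reading the coefficient sequence of $P_{Y'}$ along the resulting $n$ arithmetic progressions of step $n$ reproduces, in each progression, the coefficient sequence of $P_Y$. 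In other words the data of $Y'$ is exactly $n$ copies of the data of $Y$, horizontally dilated by $n$ and staggered by the offsets $j g_l$.

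From here I would extract the factor $n$. Translating the identity to tau functions yields $\tau_{Y'}(M)=\mathrm{const}+\sum_{j=0}^{n-1}\tau_Y\bigl(\lceil (M-jg_l)/n\rceil\bigr)$ for large $M$, so $\tau_{Y'}$ is a superposition of $n$ dilated, shifted copies of $\tau_Y$. The total ascent is then at least $n$ times that of $\tau_Y$ (each vanishing value of $\delta_Y$ recurs once in each of the $n$ progressions, and the staggering contributes only additional ascent), while the maximum of a sum is at most the sum of the maxima; combining these through the finite-bar-length formula gives $\rank\HFred(Y')\ge n\,\rank\HFred(Y)$.

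The step I expect to be the main obstacle is exactly this last passage, namely controlling how the $n$ staggered copies interact inside the graded root of $Y'$. Superimposing shifted copies of an oscillating function can in principle cancel oscillations and destroy finite bars, so one is not entitled to the naive additivity; the inequality survives only because the offsets $j g_l$ are pinned by the arithmetic of $g_l$ modulo $n$, which must be shown to align the copies' ascents favorably while keeping the subadditive bound on the maximum dominated by the surplus ascent. Making this bookkeeping precise — and ensuring the nonnegativity of the relevant counting function for all $l$, so that the even-degree support really forbids cancellation — is where the real work lies.
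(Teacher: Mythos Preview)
Your Poincar\'e-series identity
\[
P_{Y'}(t)\;=\;P_Y(t^n)\,\bigl(1+t^{g_l}+\cdots+t^{(n-1)g_l}\bigr),\qquad g_l=p_1\cdots p_{l-1},
\]
is correct and is exactly the decomposition the paper uses, only repackaged. The paper writes down the $n$ affine maps $\phi_k(z)=nz+kg_l$ for $0\le k\le n-1$ directly, checks the elementary identity $\bigl\lfloor (na_l+k)/(np_l)\bigr\rfloor=\bigl\lfloor a_l/p_l\bigr\rfloor$, and concludes $\Delta_{Y'}(\phi_k(z))=\Delta_Y(z)$ on the nose; coprimality of $g_l$ with $n$ gives disjointness of the images. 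So your observation that the $n$ residues $0,g_l,\dots,(n-1)g_l$ exhaust $\mathbb{Z}/n$ is precisely the disjointness argument, and your ``$n$ staggered copies'' are literally the $\phi_k$.

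Where the paper diverges from your outline is in the step you flagged as the obstacle. You try to pass through a superposition formula for $\tau_{Y'}$ and then argue with total ascent versus subadditivity of the maximum; this is where the cancellation worry enters and where your sketch stays heuristic (and note that for $l\ge 4$ the increment $\Delta_Y$ is not of the form $1-\delta$ with $\delta\ge 0$, so the ``nonnegative counting function'' normalization you invoke does not hold as stated). The paper bypasses all of this with a one-line general lemma: if $(X_1,\Delta_1),\dots,(X_n,\Delta_n)$ embed disjointly into $(X,\Delta)$ as abstract delta sequences, then $\sum_j\rank\mathbb H_{\mathrm{red}}(\Gamma_j)\le\rank\mathbb H_{\mathrm{red}}(\Gamma)$. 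This follows from the rank formula $\rank=\kappa+\min\tau$ together with the elementary inequality $\min\tau\ge\min\tau_1+\min\tau_2$ for a subsequence and its complement, iterated $n$ times. Since the $\phi_k$ preserve $\Delta$ exactly, no bookkeeping about aligned ascents is needed; the cancellation you feared simply cannot occur once the question is posed at the level of disjoint subsequences rather than a sum of shifted functions. Your generating-function identity is an elegant way to \emph{discover} the $\phi_k$, but for the inequality itself the direct delta-sequence argument is both shorter and gap-free.
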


\vspace{0.2cm}

There also exists a   certain degree one map $f:\Sigma(p_1,\ldots,p_l) \to \Sigma(p_1,\ldots,p_k,p_{k+1} \cdots p_l)$ called  a vertical pinch. See Section \ref{sec:nonzerodegree} for a geometric description. Our second inequality shows that Heegaard Floer homology is sensitive to the existence of vertical pinches.  

\vspace{0.2cm}

\begin{theorem}\label{thm:pinch} (Rank inequality for vertical pinches)
Let $l \geq 4$ and fix $2 \leq k \leq {l-2}$.  Then
\begin{equation}\label{eq:pinch}
 \rank \HFred(\Sigma(p_1,\ldots,p_k,p_{k+1} \cdots p_l) ) \leq  \rank \HFred(\Sigma(p_1,\ldots,p_l)).
\end{equation} 
\end{theorem}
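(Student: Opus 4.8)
The plan is to reduce \eqref{eq:pinch} to a combinatorial statement about the computation sequence produced by N\'emethi's algorithm \cite{N}, in the semigroup reformulation of \cite{CK}. Recall that to $\Sigma(p_1,\ldots,p_l)$, with $p=p_1\cdots p_l$, one associates the numerical semigroup $S=\langle p/p_1,\ldots,p/p_l\rangle\subseteq\mathbb{Z}_{\ge0}$ and a \emph{tau function} $\tau\colon\mathbb{Z}_{\ge0}\to\mathbb{Z}$ built from the Seifert data, which is eventually strictly increasing to $+\infty$. N\'emethi's identification $\Hred\cong\HFred$ together with the graded-root formalism recovers the reduced homology from the valleys of $\tau$: writing $c(h)$ for the number of connected components of the sublevel set $\{n\ge0:\tau(n)\le h\}$ (a finite union of intervals, since $\tau\to+\infty$), one has the finite sum $\rank\HFred=\sum_{h}\bigl(c(h)-1\bigr)$. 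I would take this description as the starting point, so that the theorem becomes an inequality between two such sums.

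The key structural observation is that the two manifolds share the same product. Set $A=p_1\cdots p_k$ and $B=p_{k+1}\cdots p_l$, so that $\Sigma(p_1,\ldots,p_l)$ and $\Sigma(p_1,\ldots,p_k,B)$ both have $p=AB$. Writing out generators, the semigroup $S'$ of the pinch $\Sigma(p_1,\ldots,p_k,B)$ is generated by $\{p/p_i\}_{i\le k}$ together with $p/B=A$, while $S$ is generated by the same $\{p/p_i\}_{i\le k}$ together with $p/p_j=A\,(B/p_j)$ for $k<j\le l$. Since each $A\,(B/p_j)$ is a positive multiple of $A\in S'$, every generator of $S$ lies in $S'$, giving the inclusion $S\subseteq S'$. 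In other words, the pinch replaces the $l-k\ge2$ generators $\{A\,(B/p_j)\}_{j>k}$ by the single smaller generator $A$ and thereby enlarges the semigroup; this is the combinatorial shadow of the degree-one pinch map.

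It remains to convert $S\subseteq S'$ into $\rank\HFred(\Sigma(p_1,\ldots,p_k,B))\le\rank\HFred(\Sigma(p_1,\ldots,p_l))$, and this is where the real difficulty lies. Morally, enlarging the semigroup fills in gaps and so merges or flattens the valleys of the tau function, which in the graded root collapses leaves and shortens internal edges, decreasing $\sum_h(c(h)-1)$. The obstacle is that this is genuinely more delicate than the inclusion itself: a pointwise bound between the two tau functions is both unavailable and, in any case, insufficient, because $\sum_h(c(h)-1)$ is not monotone under pointwise comparison---lowering a tau function can split an interval and create a new valley, while raising it can do the same. I would therefore argue directly at the level of the increments $\tau(n+1)-\tau(n)$, which are assembled fiber by fiber from the Seifert invariants, and show that replacing the contributions of the fibers $p_{k+1},\ldots,p_l$ by that of the single fiber $B$ only performs a rearrangement of the valley structure under which each valley of the pinch is dominated by a valley of $\Sigma(p_1,\ldots,p_l)$. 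Establishing this dominance---a careful local analysis of how the valleys of $\tau$ are indexed by $S$, rather than the clean semigroup inclusion---is the step I expect to be the main obstacle.
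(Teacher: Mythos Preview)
Your proposal is not a proof: you explicitly stop at the point where the work begins, admitting that the ``valley dominance'' step is ``the main obstacle'' without indicating how to overcome it. The semigroup inclusion $S\subseteq S'$ you record is correct but, as you yourself note, does not by itself control $\sum_h(c(h)-1)$: the two tau functions live on intervals of different length (since $N$ for the pinch is strictly smaller than $N$ for $\Sigma(p_1,\ldots,p_l)$), the ranges of their increments differ ($\Delta\in[-(k-1),k-1]$ versus $\Delta\in[-(l-2),l-2]$), and there is no pointwise comparison to exploit. Nothing in your sketch suggests a mechanism for matching valleys.

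The paper's argument is structurally different from what you outline. First, it reduces to splitting a single fiber, $qr\mapsto(q,r)$; you should do this too, since it isolates the two-generator semigroup $S_{q,r}=\langle q,r\rangle$ where number theory is available. Second, rather than using the inclusion $G_{Y'}\subseteq G_Y$ (original into pinch), the paper builds a map in the \emph{opposite} direction, $\phi:G_Y\to G_{Y'}$, by writing $x\in G_Y$ as $p_1\cdots p_l\bigl[z+qr\sum x_i/p_i\bigr]$ and replacing $z\in\mathbb{N}$ by $\psi(z)\in S_{q,r}$, where $\psi:\mathbb{N}\to S_{q,r}$ is the unique order-preserving bijection. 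This $\phi$ is a one-to-one semi-immersion of delta sequences; it is not an immersion because some defects $|\Delta_Y(x)|-|\Delta_{Y'}(\phi(x))|$ are $+1$. The crucial step---what replaces your missing ``dominance'' argument---is the construction of an explicit \emph{control function} pairing each bad point with an earlier good point, and this hinges on Sylvester's symmetry for $S_{q,r}$ (namely $n\in S_{q,r}\iff qr-q-r-n\notin S_{q,r}$). That symmetry is the real engine; the bare inclusion $S\subseteq S'$ plays no role.
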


\vspace{0.2cm}

\begin{remark}
Recall that the rank of the Instanton Floer homology of a Seifert homology sphere equals  its Casson invariant \cite{FS}. The splice-additivity of the Casson invariant, \cite{ FMK,NW},  implies the analogue of Theorem~\ref{thm:pinch} for Instanton Floer homology. 
\end{remark}

\vspace{0.2cm}

  Consider the following partial order on the set of $l$-tuples of integers.  We will write $ (p_1,\ldots,p_l) \leq  (q_1,\ldots,q_l)$, if there exists a permutation $\sigma$ of the set $\{1,\dots,l\}$ such that $p_i \leq q_{\sigma(i)}$ for all $i=1,\dots,l$.  This relation  naturally induces a partial order on the set of Seifert homology spheres with $l$ singular fibers. The following result states that the rank of the reduced Heegaard Floer homology is monotone under this partial order.

\vspace{0.2cm}

\begin{theorem}\label{thm:monotonicity} (Partial order rank inequality)
Let $Y = \Sigma(p_1,\ldots,p_l)$ and $Y' = \Sigma(q_1,\ldots,q_l)$ be Seifert homology spheres such that $(p_1,\ldots,p_l) \leq  (q_1,\ldots,q_l)$.  Then, we have 
\begin{equation}\label{eq:monotonicity}
\rank \HFred(Y)  \leq \rank \HFred(Y').
\end{equation}
\end{theorem}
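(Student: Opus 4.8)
The plan is to derive the inequality from the combinatorial model for $\HFred$ of a Seifert homology sphere rather than from a geometric map, since---unlike Theorems~\ref{thm:fiberbranched} and~\ref{thm:pinch}---the partial order cannot in general be realized by a single elementary move. First I would use the permutation $\sigma$ to arrange $p_i \le q_i$ for every $i$. One is then tempted to interpolate, changing one coordinate at a time and applying a one-coordinate version of the inequality; but the pairwise coprimality of the Seifert data obstructs this (already $(2,3,5)\le(3,4,5)$ admits no increasing chain of pairwise-coprime tuples that alters a single entry at each step). I would therefore compare $Y$ and $Y'$ directly at the level of the counting function of N\'emethi \cite{N}, in the semigroup reformulation of Can and the first author \cite{CK}: set $a=\prod_i p_i$, let $\Gamma=\langle a/p_1,\dots,a/p_l\rangle$ be the associated numerical semigroup, and recall that $\rank\HFred(\Sigma(p_1,\dots,p_l))$ is computed from the local maxima (the ``humps'') of the associated $\tau$-function, equivalently from the gap structure of $\Gamma$ together with its representation multiplicities.

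The core step is to show that this count is monotone under $(p_1,\dots,p_l)\le(q_1,\dots,q_l)$. I would express $\tau$ through the number of representations $n=\sum_i \ell_i (a/p_i)$ with $\ell_i\ge 0$, so that the humps of $\tau$---hence the rank---are governed by the lattice points in the cone spanned by the generators $a/p_i$. The goal is then to build an injection from the lattice-point data controlling the humps of $\tau_Y$ into that controlling $\tau_{Y'}$, using $p_i\le q_i$ to send each representation for $Y$ to one for $Y'$: enlarging a $p_i$ enlarges the corresponding orbifold data and can only create additional gaps, so heuristically each hump of $\tau_Y$ should be dominated by a hump of $\tau_{Y'}$. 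Where convenient I would reuse the technical lemmas on the behaviour of $\tau$ under the semigroup operations that already underlie Theorems~\ref{thm:fiberbranched} and~\ref{thm:pinch}, and I would fall back on Theorem~\ref{thm:fiberbranched} to normalize coprimality when comparing auxiliary data.

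The main obstacle is that the global scale $a=\prod_i p_i$ changes when the $p_i$ do: all generators $a/p_i$, the eventual slope of $\tau$, and the range over which the humps occur are simultaneously rescaled, so there is no naive term-by-term domination of $\tau_Y$ by $\tau_{Y'}$, and the injection must be set up in scale-invariant terms. I expect the crux to be finding the correct normalization---for instance comparing the rational generating functions $\prod_i(1-t^{a/p_i})^{-1}$ after clearing the common $(1-t^a)$ factors, or passing to a normalized delta sequence---under which a coordinatewise increase of the $p_i$ produces an honest domination of the hump count. Verifying that this domination survives the coprimality bookkeeping, and that no humps of $\tau_{Y'}$ are lost in the rescaling, is where the real work lies.
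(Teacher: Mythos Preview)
Your outline correctly isolates the strategy the paper uses---compare the delta sequences of $Y$ and $Y'$ directly via an injection of semigroup data---and you are right that one cannot interpolate one coordinate at a time because coprimality breaks.  But the proposal stops exactly at the point where the actual work begins: you identify the rescaling problem (the product $a=\prod p_i$ changes) and then gesture at generating functions or ``normalized delta sequences'' without producing the map.  As written this is a plan, not a proof, and the missing piece is precisely the construction that makes the argument go through.

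Here is the idea you are missing, and it is short.  Every $n\in G_Y$ has a \emph{unique} normal form
\[
n \;=\; p_1\cdots p_l\Bigl(k+\sum_{i=1}^l \tfrac{x_i}{p_i}\Bigr),\qquad k\ge 0,\ 0\le x_i<p_i,
\]
obtained by writing $n$ as a nonnegative combination of the generators and reducing each coefficient mod $p_i$.  Since $p_i\le q_i$, the same tuple $(k,x_1,\dots,x_l)$ is also a normal form for an element of $G_{Y'}$, so one may define
\[
\phi(n)\;=\;q_1\cdots q_l\Bigl(k+\sum_{i=1}^l \tfrac{x_i}{q_i}\Bigr)\in G_{Y'}.
\]
This \emph{is} the scale-invariant normalization you were looking for: one transports the rational number $n/(p_1\cdots p_l)$, not $n$ itself, and the constraint $x_i<p_i\le q_i$ is exactly what makes the target a legitimate normal form.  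By Theorem~\ref{theo:mainmore}(\ref{i:theo6}) one has $\Delta_Y(n)=1+k=\Delta_{Y'}(\phi(n))$, so $\phi$ preserves delta values on $S_Y$; extending by the symmetry $\phi(N_Y-x)=N_{Y'}-\phi(x)$ and checking the superadditivity $\phi(x)+\phi(\tilde x)\le\phi(x+\tilde x)$ shows $\phi$ is a one-to-one immersion in the sense of Definition~\ref{def:immers}, and Corollary~\ref{cor:rankimmers} finishes.  Your heuristic ``enlarging $p_i$ can only create additional gaps'' is not literally correct (all generators $a/p_j$ move at once), and the fallback to Theorem~\ref{thm:fiberbranched} for ``normalizing coprimality'' is a red herring; the normal-form map handles everything in one stroke.
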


\vspace{0.2cm}

\subsection{The botany problem for Seifert homology spheres} Rank inequalities are  useful when one studies global problems about Heegaard Floer homology. Recall that the plus flavor of Heegaard Floer homology of any integral homology sphere is a  $\mathbb{Z}$-graded $\mathbb{Z}[U]$-module.

\begin{question}  Let $M$ be a $\mathbb{Z}$-graded $\mathbb{Z}[U]$-module. Is there a Seifert homology sphere realizing $M$ as its Heegaard Floer homology? If there is at least one, what are all the Seifert homology spheres whose Heegaard Floer homology is isomorphic to $M$?
\end{question}

Of course one can ask many different versions of this question. For example instead of Seifert homology spheres, one can take any family of $3$-manifolds. We focused on Seifert homology spheres because  their Heegaard Floer homology can be calculated easily using an algorithm \cite{N}. Nevertheless the algorithm itself is not sufficient for problems involving infinite families. As an application of our rank inequalities we prove the following result.

\newpage

\begin{theorem}\label{thm:botany}
Suppose $\rank \HFred (\Sigma (p_1,\ldots,p_l))=n \geq 1$, then
\begin{enumerate}
\item $6 \leq l!< \max \{2n ,7 \}$,\label{i:bot1}
\item $\max\{p_1,\ldots,p_l \} < 6n+7$.\label{i:bot2}
\end{enumerate}
\end{theorem}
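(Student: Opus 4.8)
The plan is to compress the three rank inequalities into a single recursive estimate and then feed it a small number of explicit computations coming from N\'emethi's algorithm. First I would record the engine of the whole argument: for every $l \geq 4$ and every admissible tuple,
\begin{equation}
 p_l \cdot \rank \HFred(\Sigma(p_1,\ldots,p_{l-1})) \leq \rank \HFred(\Sigma(p_1,\ldots,p_l)).
\end{equation}
This follows by peeling off one fiber. Applying the vertical pinch of Theorem~\ref{thm:pinch} with $k=l-2$ to combine the last two fibers gives $\rank \HFred(\Sigma(p_1,\ldots,p_{l-2},p_{l-1}p_l)) \leq \rank \HFred(\Sigma(p_1,\ldots,p_l))$, while Theorem~\ref{thm:fiberbranched}, applied to the base $\Sigma(p_1,\ldots,p_{l-1})$ with branching index $m=p_l$ along the fiber of order $p_{l-1}$, gives $p_l \cdot \rank \HFred(\Sigma(p_1,\ldots,p_{l-1})) \leq \rank \HFred(\Sigma(p_1,\ldots,p_{l-2},p_{l-1}p_l))$. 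Since a Seifert homology sphere is insensitive to the ordering of its Seifert invariants, I may permute the fibers freely, so I can peel any chosen fiber over any chosen $(l-1)$-fiber subcollection; the coprimality hypothesis of Theorem~\ref{thm:fiberbranched} is automatic as the $p_i$ are pairwise coprime. Theorem~\ref{thm:monotonicity} then lets me replace any tuple by the componentwise-smallest admissible one, namely the first $l$ primes, whenever I only want a universal lower bound.

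For part~(\ref{i:bot1}) I first note $l\geq 3$, since $l\leq 2$ forces $Y=S^3$ and $\rank\HFred=0$, contradicting $n\geq 1$; this gives $6\leq l!$. When $n\leq 3$ I must show $l=3$, and when $n\geq 4$ I must show $l!<2n$, i.e. $n>l!/2$. Both follow from a single seed computation together with the recursion. The seed is $\rank\HFred(\Sigma(2,3,5,7))\geq 13$, which I would verify directly from N\'emethi's algorithm \cite{N} (equivalently the semigroup reformulation \cite{CK}). By monotonicity every four-fiber space has rank at least this value, so $n\leq 3$ is impossible once $l\geq 4$, forcing $l=3$ and hence $l!=6<7=\max\{2n,7\}$; and for the minimal $l$-tuple the recursion gives $n_l\geq q_l\,n_{l-1}$, where $q_l\geq l$ is the $l$-th prime. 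Since $13>4!/2$, an immediate induction yields $n_l>l!/2$ for all $l\geq 4$, because the multiplier $q_l$ dominates the factor $l$ relating $l!/2$ to $(l-1)!/2$. Combining with monotonicity gives $n>l!/2$ for every $l$-tuple, i.e. $l!<2n=\max\{2n,7\}$.

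For part~(\ref{i:bot2}) I would isolate the largest fiber. After relabelling so that $p_l=\max_i p_i$, I keep $p_l$ inside the retained triple and use the recursion to peel the remaining $l-3$ fibers, obtaining $\rank\HFred(\Sigma(p_1,\ldots,p_l))\geq \rank\HFred(\Sigma(a,b,p_l))$ for two of the small fibers $a,b$; monotonicity then bounds this below by $\rank\HFred(\Sigma(2,3,p_l))$. Thus for every $l$ the problem reduces to the three-fiber family $\Sigma(2,3,p)$, and the entire inequality comes down to the explicit estimate $\rank\HFred(\Sigma(2,3,p))\geq (p-5)/6$, valid for $p$ coprime to $6$, which I would read off from \cite{N,CK}. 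This yields $p_l\leq 6n+5<6n+7$, as desired, the two-unit slack comfortably absorbing the passage from $\Sigma(a,b,p_l)$ to $\Sigma(2,3,p_l)$.

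The main obstacle is not the soft propagation but the two numerical seeds: the bound $\rank\HFred(\Sigma(2,3,5,7))\geq 13$ and the linear growth $\rank\HFred(\Sigma(2,3,p))\gtrsim p/6$. The three inequalities of this paper only move information between manifolds and cannot by themselves pin down a constant; the factorial threshold and the coefficient $6$ both originate in the arithmetic of N\'emethi's computation sequence (respectively the semigroup counting function of \cite{CK}), so the real work is extracting these explicit estimates. A secondary point requiring care is the bookkeeping in the peeling step: one must check at each stage that the index constraint $2\leq k\leq l-2$ of Theorem~\ref{thm:pinch} is met and that the combined fiber $p_{l-1}p_l$ remains coprime to the others, which it does precisely because the $p_i$ are pairwise coprime.
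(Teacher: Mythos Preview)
Your argument tracks the paper's proof closely: the same peeling recursion (pinch followed by branched cover), the same seed $\rank\HFred(\Sigma(2,3,5,7))=13$ for part~(\ref{i:bot1}), and the same reduction of part~(\ref{i:bot2}) to the three-fiber family containing the largest multiplicity.

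The one place your write-up diverges, and where there is a genuine gap, is the endgame of part~(\ref{i:bot2}). You invoke Theorem~\ref{thm:monotonicity} to pass from $\Sigma(a,b,p_l)$ to $\Sigma(2,3,p_l)$, but that theorem requires both manifolds to be Seifert homology spheres, and $\Sigma(2,3,p_l)$ is one only when $\gcd(p_l,6)=1$. Nothing in the hypotheses forces this; for instance $Y=\Sigma(3,5,8)$ has $p_l=8$. You explicitly note the coprimality restriction on your seed estimate but never say what to do when it fails, and the ``two-unit slack'' you mention does not cover it: that slack is just the gap between your lower bound $(p-5)/6$ and the exact value $\lfloor p/6\rfloor$, not a cushion for a missing coprimality hypothesis. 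The paper sidesteps the issue by arguing by contradiction: assuming $p_l\ge 6n+7$, it applies monotonicity with the \emph{fixed} target $\Sigma(2,3,6n+7)$, which is always a Seifert homology sphere since $6n+7\equiv 1\pmod 6$, and the exact computation $\rank\HFred(\Sigma(2,3,6n+7))=n+1$ then gives $n\ge n+1$. Your route can be salvaged by replacing $p_l$ with the largest $p'\le p_l$ coprime to $6$ and running through the residue classes of $p_l$ modulo $6$, but the contradiction argument is cleaner and avoids the case analysis.
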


Given $n\geq 1$, there are only finitely many tuples of pairwise relatively prime integers $(p_1,\ldots,p_l)$ satisfying the conditions in the above theorem. Hence one can solve the botany problem by calculating the Heegaard Floer homology of a finite set of Seifert homology spheres. To illustrate this, we list  all Seifert homology spheres with rank at most $12$ in Table~\ref{tab:rank}.

\begin{remark}
Previously it was shown in \cite{CK} that there can be only finitely many Seifert homology spheres with fixed Heegaard Floer homology (as a graded $\mathbb{Z}[U]$-module), without explicitly giving bounds on the number of singular fibers and their multiplicities. 
\end{remark}

\subsection{Non-zero degree maps between Seifert homology spheres}
\begin{question}
Given $3$-manifolds $Y$ and $Y'$, does there exist a map $f:Y'\to Y$ with $\mathrm{deg}(f)\neq 0$?
\end{question}

We will not attempt to answer this question here, but our work provides  evidence that Heegaard Floer homology could be useful in this direction. Indeed, in order to show that such a map does not exist, one should look for some obstructions.   Various topological quantities obstruct the  existence of a non-zero degree map. The rank of the first singular homology (i.e. the first betti number) is such a quantity, as one can easily check with elementary algebraic topology. Then a natural question is whether the rank of the Heegaard Floer homology  gives a similar obstruction. One of the goals of this paper is to study the behavior under non-zero degree maps of the Heegaard Floer homology of Seifert homology spheres.  It turns out that  such maps are  well-understood \cite{Hu, Ro2, Ro1}. By combining these results with our rank inequalities we prove the following theorem.

\begin{theorem}\label{thm:nonzeroexist}
Let $f:Y' \to Y$ be a map between Seifert homology spheres. Then 
\begin{equation}\label{eqn:rankinequality}
\left | \mathrm{deg}(f) \right |  \rank \HFred(Y) \leq  \rank \HFred(Y').
\end{equation}
\end{theorem}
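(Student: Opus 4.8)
The plan is to reduce Theorem~\ref{thm:nonzeroexist} to the three rank inequalities already established (Theorems~\ref{thm:fiberbranched}, \ref{thm:pinch}, and \ref{thm:monotonicity}) by invoking the classification of non-zero degree maps between Seifert homology spheres due to Hu and Rong \cite{Hu, Ro2, Ro1}. The key structural fact I would exploit is that any such map factors, up to homotopy, as a composition of the geometrically standard pieces that appear in that classification: covering maps (in particular the fiberwise cyclic branched covers that realize Theorem~\ref{thm:fiberbranched}), vertical pinches (Theorem~\ref{thm:pinch}), and maps that only alter the Seifert invariants in a way controlled by the partial order (Theorem~\ref{thm:monotonicity}). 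First I would state precisely, as a citation-backed lemma, the decomposition of $f$ into these elementary maps, keeping careful track of how $\deg(f)$ multiplies across the composition.

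The main argument is then multiplicative bookkeeping. Each elementary piece $g$ contributes an inequality of the form $|\deg(g)|\rank\HFred(\text{target}) \leq \rank\HFred(\text{source})$: for a fiber branched cover this is exactly \eqref{eq:fiberbranched} with $|\deg(g)| = n$; for a vertical pinch this is \eqref{eq:pinch} with $|\deg(g)| = 1$; and for a map realizing a partial-order comparison this is \eqref{eq:monotonicity}, again with degree one. I would compose these inequalities along the factorization of $f$. Since degree is multiplicative under composition, $|\deg(f)|$ equals the product of the degrees of the pieces, and chaining the per-piece inequalities telescopes to give
\begin{equation*}
|\deg(f)|\rank\HFred(Y) \leq \rank\HFred(Y'),
\end{equation*}
which is precisely \eqref{eqn:rankinequality}.

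The hard part will be verifying that the classification results genuinely supply a factorization into exactly the three types of maps for which we have proved inequalities, with matching degrees, and that no residual map type falls outside our toolkit. In particular I would need to check the edge cases: maps whose degree is realized partly by a branched cover in a fiber we have not singled out (which may require first permuting the $p_i$, harmlessly, or applying Theorem~\ref{thm:fiberbranched} after reordering), and the degenerate situations with few singular fibers where the target is $S^3$ and $\HFred(S^3)=0$ makes the inequality trivially hold. A secondary subtlety is that the cited classification may be stated up to homotopy or up to fiber-preserving homotopy rather than on the nose; since $\rank\HFred$ is a homeomorphism invariant, homotopy equivalence of the relevant manifolds suffices, but I would confirm that the degree is preserved under whatever equivalence the classification uses. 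Once the factorization lemma is pinned down, the remainder is the routine telescoping described above.
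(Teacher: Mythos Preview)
Your overall strategy---invoke the Hu--Rong classification, factor $f$ into elementary pieces, and telescope the per-piece rank inequalities---is exactly what the paper does. The correction you need is in your list of elementary pieces: Theorem~\ref{thm:monotonicity} plays no role here. The classification produces only (i) vertical pinches (degree one) and (ii) fiber-preserving branched covers; there are no ``partial-order maps'' in the factorization. The branched covers in turn decompose (Proposition~\ref{prop:classificationbranched} in the paper) into cyclic covers branched along a \emph{singular} fiber, $\Sigma(p_1,\ldots,np_l)\to\Sigma(p_1,\ldots,p_l)$, which is exactly Theorem~\ref{thm:fiberbranched}, and cyclic covers branched along a \emph{regular} fiber, $\Sigma(p_1,\ldots,p_l,n)\to\Sigma(p_1,\ldots,p_l)$. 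This second type is not covered by any single one of your three inequalities; the paper handles it by chaining Theorems~\ref{thm:fiberbranched} and~\ref{thm:pinch}:
\[
n\,\rank\HFred(\Sigma(p_1,\ldots,p_l))\le\rank\HFred(\Sigma(p_1,\ldots,np_l))\le\rank\HFred(\Sigma(p_1,\ldots,p_l,n)).
\]
A second point you should not gloss over: the classification (Theorem~\ref{thm:rongnonzero}) applies only when both manifolds have infinite fundamental group, so you must first dispose of $S^3$ and $\Sigma(2,3,5)$ on the target side (trivial since $\HFred=0$) and then argue via Proposition~\ref{fact:nonzero} that the source must also have infinite $\pi_1$. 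A further technical step the paper carries out is showing that the orbifold degree of any fiber-preserving branched cover between ISHS's is $1$ (Proposition~\ref{prop:degreeisfiberdegree}); without this, the factorization in Proposition~\ref{prop:classificationbranched} does not go through. Once these adjustments are made, your telescoping argument is precisely the paper's.
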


\noindent Note that Theorem~\ref{thm:nonzeroexist} is trivial if $\mathrm{deg}(f) = 0$.

\vspace{0.2cm}

The organization is as follows: In Section~\ref{s:Calc}, we review N\'emethi's method for calculating the Heegaard Floer homology of Seifert homology spheres. In Section~\ref{s:absdel}, we build the theory of abstract delta sequences and their morphisms to develop the combinatorial machinery to prove the three rank inequalities. Inequalities (\ref{eq:fiberbranched}), (\ref{eq:monotonicity}), and (\ref{eq:pinch}) are proven in Section~\ref{sec:bra}, Section~\ref{sec:par}, and Section~\ref{sec:pin} respectively.  We discuss the applications to the botany problem and  nonzero degree maps in Section~\ref{s:botany} and Section~\ref{sec:nonzerodegree} respectively. In the last section we address some further directions and open problems.

\section*{Acknowledgments}
We would like to thank Ian Agol for responding to our question on MathOverflow, leading to a proof of Proposition \ref{prop:classificationbranched}.  In the course of this work, the first author  was supported by a Simons Fellowship and the National Science Foundation FRG Grant DMS-1065178.  The second author was partially supported by the National Science Foundation RTG Grant DMS-0636643.

\section{Calculating the Heegaard Floer homology of Seifert homology spheres}\label{s:Calc}

Graded roots are certain infinite trees that naturally encode Heegaard Floer homology \cite{N}. These objects can be described by sequences as follows.  Let $\tau$ be a given sequence of integers which is either finite or  non-decreasing after a finite index $N$. For every $n\in \mathbb{N}$, let $R_n$ be the infinite graph with vertex set $\mathbb{Z}\cap [\tau (n), \infty)$ and the edge set $\{[k,k+1]: k \in \mathbb{Z}\cap [\tau (n), \infty) \}$. We identify all common vertices and edges in $R_n$ and $R_{n+1}$ for each $n\in \mathbb{N}$ to get an infinite tree $\Gamma_\tau$. To each vertex $v$ of $\Gamma_\tau$, we can assign a grading $\chi_\tau(v)$ which is the unique integer corresponding to $v$ in any $R_n$ to which $v$ belongs. The pair $(\Gamma_\tau, \chi_\tau)$ is called a \emph{graded root}. Most of the time, we drop the grading function $\chi_\tau$ from our notation for brevity.   Clearly many different sequences can give the same graded graded root. For example $\Gamma_\tau$ does not depend on the values $\tau(n)$ for $n>N$. In fact, $\Gamma_\tau$ is completely determined by the subsequence of local maximum and local minimum  values of $\tau$. See Figure~\ref{fig:gradedroot} for an example of a graded root given by the sequence $\tau=(-2,-1,-2,0,-2,\dots)$ where $\tau$ is increasing after the first five terms.

\begin{figure}[h]
	\includegraphics[width=0.40\textwidth]{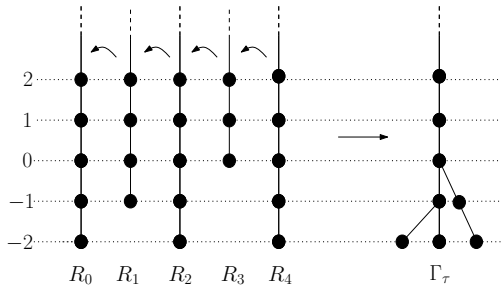}
	\caption{Graded root for $\tau=(-2,-1,-2,0,-2,\nearrow)$.}
	\label{fig:gradedroot}
\end{figure}

To any graded root $\Gamma_\tau$, we associate a $\mathbb{Z}$-graded $\mathbb{Z}[U]$-module as follows: Let $\mathbb{H} (\Gamma_\tau)$ be the free $\mathbb{Z}$-module on the vertex set of $\Gamma _\tau$.  We require that the degree of the generator corresponding to each vertex $v$  has degree $2 \chi_\tau (v)$. We define a degree $-2$ endomorphism $U$ of $\mathbb{H} (\Gamma_\tau)$ by sending each vertex $v$ to the sum of the vertices $w$ where  $w$ is connected to $v$ by an edge and $\chi_\tau(w)<\chi_\tau(v)$, or to zero if no such vertices exist. The group  $\mathbb{H} (\Gamma_\tau)$ is not finitely generated, but we can build two finitely generated groups by exploiting the $U$-action. Define
\begin{eqnarray*}
\mathbb{H}_{\mathrm{red}}(\Gamma_\tau)&:=&\mathrm{Coker}(U^n),\; \mathrm{for} \; \mathrm{large} \;n,\\
\widehat{\mathbb{H}}(\Gamma_\tau)&:=& \mathrm{Ker}(U)\oplus \mathrm{Coker}(U)[-1].
\end{eqnarray*}

\noindent It can be checked from the definition of a graded root that the first group is well defined.  The symbol $[-1]$ in the second equation indicates that we lower the degree of each homogeneous element of $\mathrm{Coker}(U)$ by one.

In his seminal work \cite{N}, N\'emethi 

\begin{itemize}
\item Constructed an abstract graded root $\Gamma_Y$ for every rational homology sphere $Y$ which bounds a special type of plumbed $4$-manifold $X$, called an almost rational plumbing, starting with the intersection form of $X$.

\item Showed that the Heegaard Floer homology group $HF^+(-Y)$ is isomorphic to $\mathbb{H}(\Gamma_Y)$ with a fixed degree shift.

\item Gave an algorithm for calculating a sequence $\tau_Y$ which generates $\Gamma_Y$, in a way similar to Laufer's method for finding Artin's fundamental cycle.

\item Explicitly calculated $\tau_Y$ for   all Seifert  rational homology spheres (with base orbifold $S^2$) in terms of their Seifert invariants.  
\end{itemize}

We now review N\'emethi's formulation of $\tau_Y$ for  Seifert homology spheres. For every positive integer $l$, let $(p_1,p_2,\dots,p_l)$ be an $l$-tuple of pairwise relatively prime  positive integers with $p_i\geq 2$ for all $i=1,\dots,l$. Denote by $Y=\Sigma(p_1,\ldots,p_l)$ the Seifert fibered space with base orbifold $S^2$ and normalized Seifert invariants  $(e_0,(p'_1,p_1),\ldots,(p'_l,p_l))$ 
where $e_0,p'_1,p'_2,\dots,p'_l$ are defined by
\begin{equation}\label{e:seifert}
e_0p_1p_2\cdots p_l+p_1'p_2\cdots p_l+p_1p_2'\cdots p_l+\cdots+p_1p_2\cdots p_l'=-1,
\end{equation}

\noindent with $0\leq p'_i\leq p_i-1$, for all $i=1,\dots,l$.  Such manifolds $Y$ are precisely the Seifert homology spheres.  The numbers $p_i$ are called the \emph{multiplicities} of the singular fibers of $Y$.  Permutations of multiplicities do not change the Seifert homology sphere. 

Let $\Delta_Y: \mathbb{N}\to \mathbb{Z}$ denote the function 
\begin{equation}\label{e:delta}
\Delta_Y (n)=  1 + |e_0|n - \sum_{i=1}^{l}\left \lceil \frac{np_i'}{p_i}  \right \rceil ,
\end{equation}
\noindent where $\lceil x \rceil$ is smallest integer greater than $x$. Let $\tau _Y$ be the unique solution of the difference equation
\begin{equation}\label{e:tau}
\tau _Y(n+1)- \tau _Y (n)= \Delta _Y (n) , \; \mathrm{with} \; \tau_Y (0)=0.
\end{equation}
It can be checked that $\tau_Y$ is non-decreasing after a finite index so it defines a graded root $\Gamma_{\tau_Y}$. 
\begin{theorem}[Nemethi, Section 11.13 of \cite{N}] \label{theo:Nemethi}
We have the following isomorphisms   of $\mathbb{Z}[U]$-modules
up to  an overall degree shift.
\begin{enumerate}
\item $HF^+(-Y)\cong \mathbb{H}(\Gamma_{\tau_Y})$,
\item $\HFred (-Y) \cong \mathbb{H}_{\mathrm{red}}(\Gamma_{\tau_Y})$,
\item $\HFhat (-Y) \cong \widehat{\mathbb{H}}(\Gamma_{\tau_Y})$.
\end{enumerate}
\end{theorem}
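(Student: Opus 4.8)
The plan is to deduce this from Ozsv\'ath and Szab\'o's combinatorial computation of $HF^+$ for plumbed $3$-manifolds \cite{OS1} together with N\'emethi's reduction of the resulting lattice invariant to the single sequence $\tau_Y$. The starting point is to realize $Y=\Sigma(p_1,\ldots,p_l)$ as the oriented boundary of a negative-definite star-shaped plumbed $4$-manifold $X=X(G)$: a central vertex $v_0$ of weight $e_0$ is joined to $l$ legs, the $i$-th leg being the linear plumbing encoding the continued-fraction expansion of $p_i/p_i'$. One checks from (\ref{e:seifert}) that $G$ is negative-definite and almost rational, so that only the central vertex $v_0$ can fail the rationality (``bad vertex'') condition and Ozsv\'ath and Szab\'o's theorem applies.

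First I would invoke \cite{OS1} (using the adjunction relations of \cite{OS3}): for a negative-definite plumbing tree with at most one bad vertex, $HF^+(-Y)$ is isomorphic, up to an overall grading shift, to the combinatorial module $\mathbb{H}^+(G)$ built from the lattice $L=H_2(X)\cong\mathbb{Z}^{|G|}$, its dual $L'$, and the Riemann--Roch weight function $\chi_k(x)=-\tfrac12\bigl(k(x)+(x,x)\bigr)$ attached to the unique (since $Y$ is a homology sphere) spin$^c$ structure. Concretely, $\mathbb{H}^+(G)$ records the $\mathbb{Z}[U]$-module structure of the direct limit of the homologies of the sublevel sets $\{x:\chi(x)\le m\}$ as $m\to\infty$; since the form is negative-definite this function is bounded below and proper, and for almost rational $G$ only its zeroth lattice cohomology survives, so $\mathbb{H}^+(G)$ is exactly the module $\mathbb{H}$ of a graded root.

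The heart of the argument is N\'emethi's reduction theorem, which I expect to be the main obstacle. Here one must show that, for an almost rational graph, the multivariable weight function $\chi$ on $L$ can be replaced, without changing the homotopy type of the sublevel sets, by its restriction to a one-parameter \emph{computation sequence} $x(0),x(1),\ldots$ --- a generalized Laufer sequence in which the coordinate of $v_0$ increases by one at each stage while the leg coordinates are chosen to minimize $\chi$. The minimization over each leg, given central coordinate $n$, is a continued-fraction optimization whose value is governed by the rounding terms $\lceil np_i'/p_i\rceil$; carrying this out and tracking the increments yields precisely $\chi(x(n+1))-\chi(x(n))=\Delta_Y(n)$ with $\Delta_Y$ as in (\ref{e:delta}). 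Setting $\tau_Y(n)=\chi(x(n))$, normalized so that $\tau_Y(0)=0$, then solves (\ref{e:tau}), and the graded root $\Gamma_{\tau_Y}$ built from the local extrema of $\tau_Y$ reproduces the sublevel-set topology, giving $\mathbb{H}^+(G)\cong\mathbb{H}(\Gamma_{\tau_Y})$ and hence part (1).

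Finally, parts (2) and (3) are formal consequences. Applying $\HFred=\operatorname{Coker}(U^n)$ for large $n$ and the algebraic definition $\HFhat=\operatorname{Ker}(U)\oplus\operatorname{Coker}(U)[-1]$ to the isomorphism of part (1), and using that these operations commute with the grading-preserving isomorphism up to the fixed overall shift, yields $\HFred(-Y)\cong\mathbb{H}_{\mathrm{red}}(\Gamma_{\tau_Y})$ and $\HFhat(-Y)\cong\widehat{\mathbb{H}}(\Gamma_{\tau_Y})$. The only point requiring care is that the degree shift from \cite{OS1} is the same in all three flavors, which follows because all three invariants are derived functorially from the single $\mathbb{Z}[U]$-module $HF^+(-Y)$.
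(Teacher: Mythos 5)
The paper offers no proof of this statement: it is quoted directly from N\'emethi \cite[Section 11.13]{N}, and the surrounding text merely summarizes what that source does. Your outline --- realizing $Y$ as the boundary of a negative-definite star-shaped (almost rational) plumbing, invoking the Ozsv\'ath--Szab\'o computation \cite{OS1} for at most one bad vertex, reducing the lattice weight function to a generalized Laufer computation sequence whose increments are exactly $\Delta_Y(n)$ so that $\tau_Y(n)=\chi(x(n))$, and then obtaining the reduced and hat flavors formally from the graded $\mathbb{Z}[U]$-module $HF^+(-Y)$ --- is precisely the architecture of N\'emethi's argument as the paper itself summarizes it (construction of $\Gamma_Y$ for almost rational plumbings, the isomorphism $HF^+(-Y)\cong\mathbb{H}(\Gamma_Y)$ up to a degree shift, and the Laufer-type algorithm for $\tau_Y$), so your proposal is correct in approach and consistent with the cited source.
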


This theorem is sufficient for calculating the Heegaard Floer homology of a fixed Seifert homology sphere. On the other hand, one needs to develop a better understanding of the term $\Delta _Y$ in Equation (\ref{e:delta}),  in order to prove theorems regarding the Heegaard  Floer homology of infinite  families of Seifert homology spheres. The following result serves that purpose.  Recall that $Y=\Sigma(p_1,\ldots,p_l)$. Denote by  $G_Y$ the numerical semigroup  generated by $\displaystyle\frac{p_1p_2\dots p_l}{p_i}$ for $i=1,2,\dots,l$.  Define the constant
$$
N_Y=p_1p_2\cdots p_l\left ( \left (l-2 \right )-\sum_{i=1}^l\frac{1}{p_i}\right ).
$$

\begin{theorem} [Can-Karakurt, Theorem 4.1 of \cite{CK}] \label{theo:mainmore} $\;$

\begin{enumerate}
	\item \label{i:theo1} $N_Y$ is a positive integer, unless $l\leq 2$ or $l=3$ with $\{p_1,p_2,p_3\}=\{2,3,5\}$.
	\item \label{i:theo2} $\Delta_Y (n) \geq 0 $,  for all $n>N_Y$.
	\item \label{i:theo3} $\Delta_Y(n)=-\Delta_Y(N_Y-n)$,  for all $0\leq n\leq N_Y$.
		\item \label{i:theo4} $\Delta_Y(n)\in \mathbb{Z}\cap [-l+2,l-2]$, for all $n$ with $0\leq n \leq N_Y$.
	\item \label{i:theo5}For  $0\leq n \leq N_Y$, one has $\Delta_Y (n)\geq1$ if and only if $n \in G_Y$.
	\item \label{i:theo6} If $n\in G_Y$ is written in the form $\displaystyle n=p_1p_2\cdots p_l\left (\sum _{i=1}^l \frac{x_i}{p_i} \right )$ then $ \displaystyle \Delta_Y (n)=1 + \sum _{i=1}^l \left \lfloor \frac{x_i}{p_i} \right \rfloor$.
\end{enumerate}
\end{theorem}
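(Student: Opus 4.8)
The plan is to reduce all six assertions to a single master formula for $\Delta_Y$ in terms of a ``reduced representative'' modulo the period of the semigroup. Throughout I would write $P = p_1\cdots p_l$ and $P_i = P/p_i$, so that $G_Y = \langle P_1,\dots,P_l\rangle$ and $N_Y = (l-2)P - \sum_i P_i$; the latter expression is manifestly an integer, which disposes of the integrality half of \eqref{i:theo1}. From the defining relation \eqref{e:seifert} I would extract two arithmetic facts. Reducing modulo $P$ shows $e_0<0$ and $|e_0|P - \sum_i p_i' P_i = 1$, hence $|e_0| - \sum_i p_i'/p_i = 1/P$. Reducing modulo $p_i$, and using $p_i \mid P_j$ for $j\neq i$, shows $p_i' P_i \equiv -1 \pmod{p_i}$, so each $p_i'$ and each $P_i$ is a unit modulo $p_i$.

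Next I would set up the reduced-representative machinery. By the Chinese remainder theorem, for every $n$ there are unique $x_i(n)\in\{0,\dots,p_i-1\}$ with $x_i(n)P_i \equiv n \pmod{p_i}$ for all $i$; put $v(n) = \sum_i x_i(n)P_i$ and $k(n) = (n - v(n))/P \in \mathbb{Z}$, so that $n = v(n) + k(n)P$ and $0 \le v(n) \le v_0 := \sum_i (p_i-1)P_i = lP - \sum_i P_i$. A short argument shows $v(n)$ is the smallest element of $G_Y$ in the residue class of $n$ modulo $P$: one may reduce any nonnegative coefficient vector modulo the $p_i$ without changing the class, and this only decreases the value. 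Consequently, for $n\ge 0$ one has $n\in G_Y \iff k(n)\ge 0$. The heart of the proof is then the master formula
\[
\Delta_Y(n) = 1 + k(n) \qquad\text{for all } n .
\]
I would prove this in two steps. First the quasi-periodicity $\Delta_Y(m+P) = \Delta_Y(m) + 1$, which is immediate because $Pp_i'/p_i = P_i p_i'$ is an integer and $|e_0|P - \sum_i p_i' P_i = 1$. Second, the base case $\Delta_Y(v) = 1$ for every reduced $v = \sum_i x_i P_i$ with $0\le x_i<p_i$. This base case is the one genuine computation, and it is where I expect the main obstacle to lie, since it consumes all of the Seifert arithmetic while everything afterward is formal: using $v p_i' \equiv -x_i \pmod{p_i}$ to evaluate each ceiling in \eqref{e:delta}, and then the identity $|e_0| - \sum_i p_i'/p_i = 1/P$ to cancel the fractional contributions, one finds $\Delta_Y(v) = 1 + v/P - v/P = 1$.

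With the master formula in hand the remaining items fall out. For \eqref{i:theo6}, writing $n=\sum_i x_i P_i$ with $x_i\ge 0$ gives $k(n)=\sum_i \lfloor x_i/p_i\rfloor$, so $\Delta_Y(n)=1+\sum_i\lfloor x_i/p_i\rfloor$. For \eqref{i:theo5}, the chain $\Delta_Y(n)\ge 1 \iff k(n)\ge 0 \iff n\in G_Y$ is immediate for $n \ge 0$. For the symmetry \eqref{i:theo3}, I would note $N_Y \equiv -P_i \pmod{p_i}$, whence $x_i(N_Y-n) = p_i-1-x_i(n)$ and therefore $v(n)+v(N_Y-n) = v_0 = N_Y + 2P$; substituting into the master formula gives $\Delta_Y(n)+\Delta_Y(N_Y-n) = 2 + (N_Y - v_0)/P = 0$. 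Finally \eqref{i:theo4} and \eqref{i:theo2} are size estimates on $k(n)$: from $0\le v(n)\le v_0$ and $N_Y = v_0-2P$ one gets, for $0\le n\le N_Y$, the bounds $-(l-1)\le k(n)\le l-3$ (so $\Delta_Y(n)\in[-l+2,l-2]$), and for $n>N_Y$ the bound $k(n) > -2$ (so $\Delta_Y(n)\ge 0$). It remains only to complete \eqref{i:theo1} by verifying positivity of $N_Y$, i.e.\ $l-2>\sum_i 1/p_i$: this is a short case analysis using pairwise coprimality, where for $l=3$ the inequality fails precisely for $\{2,3,5\}$, and for $l\ge 4$ a crude estimate such as $\sum_i 1/p_i \le \tfrac12 + \tfrac{l-1}{3} < l-2$ always holds.
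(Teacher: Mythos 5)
The paper does not actually prove this statement: it is imported verbatim from Can--Karakurt \cite{CK}, so there is no internal argument to compare yours against, and the question is simply whether your blind proof stands on its own. It does; I checked the pivotal computations. From Equation~(\ref{e:seifert}) you correctly extract $|e_0|P - \sum_i p_i' P_i = 1$ and $p_i' P_i \equiv -1 \pmod{p_i}$ (so $P_i$ is a unit mod $p_i$ and your $x_i(n)$, $v(n)$, $k(n)$ are well defined, with $v(n)$ the least element of $G_Y$ in the class of $n$ mod $P$). The master formula $\Delta_Y(n) = 1 + k(n)$ then follows as you say: the ceilings in Equation~(\ref{e:delta}) shift by the integers $p_i' P_i$ under $n \mapsto n+P$, giving $\Delta_Y(n+P) = \Delta_Y(n) + |e_0|P - \sum_i p_i'P_i = \Delta_Y(n)+1$, and in the base case $v p_i' \equiv -x_i \pmod{p_i}$ gives $\lceil vp_i'/p_i \rceil = (vp_i' + x_i)/p_i$, after which $|e_0| - \sum_i p_i'/p_i = 1/P$ and $\sum_i x_i/p_i = v/P$ make the fractional parts cancel exactly as claimed. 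The downstream deductions are also sound: $N_Y \equiv -P_i \pmod{p_i}$ gives $x_i(N_Y) = p_i - 1$, hence $v(n) + v(N_Y - n) = v_0 = N_Y + 2P$ exactly (the candidate value $p_i - 1 - x_i(n)$ already lies in $[0,p_i-1]$), so $k(n) + k(N_Y-n) = -2$ and item (\ref{i:theo3}) follows; integrality of $k(n)$ upgrades the strict bounds $-l < k(n) < l-2$ for $0 \le n \le N_Y$ and $k(n) > -2$ for $n > N_Y$ into items (\ref{i:theo4}) and (\ref{i:theo2}); and your estimate $\tfrac{1}{2} + \tfrac{l-1}{3} < l-2$ for $l \ge 4$ is valid since pairwise coprimality allows at most one $p_i = 2$, with the $l=3$ case reducing to the finite check that isolates $\{2,3,5\}$. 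What your organization buys, compared with proving the six items one at a time, is that everything is a one-line consequence of a single closed formula; it even yields slightly more than the statement, since your equivalence in (\ref{i:theo5}) holds for every $n \ge 0$ (the hypothesis $n \le N_Y$ is not needed) and formula (\ref{i:theo6}) holds for arbitrary integer coefficients $x_i$, not only nonnegative ones.
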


\section{Abstract delta sequences and their morphisms}\label{s:absdel}

The discussion in the previous section provides an explicit method for the calculation of the graded roots and hence Heegaard Floer homology of Seifert homology spheres. On the other hand, it is a challenging question to give a closed formula for these objects.  Rather than attempting to find such a closed formula, we will develop some techniques to compare ranks of Heegaard Floer homology of two given Seifert homology spheres. That is why we shall define abstract delta sequences and study their morphisms in this section.  These objects essentially reduce the rank comparison problems to the existence of certain  maps between some combinatorial objects.

\subsection{Basic definitions and notation}\label{ss:def}

\begin{definition}\label{def:deltaseq}
A delta sequence is a pair $(X,\Delta)$ where
\begin{enumerate}
	\item \label{deltaseqit1} $X$ is a well--ordered finite set,
	\item \label{deltaseqit2}$\Delta : X\to \mathbb{Z}\setminus \{ 0 \}$ with $\Delta (z_0)>0$, where $z_0$ is the minimum of $X$.
\end{enumerate}
\end{definition}

We shall denote a delta sequence $(X,\Delta)$ simply by $\Delta$ if the set $X$ is clear from the context. Any delta sequence $\Delta$ naturally induces a graded root $\Gamma _\Delta$ as follows.  Write the ordered set as
$$X=\{z_0,z_1,\dots,z_{k-1} \},$$
\noindent with
$$z_0<z_1<\dots<z_{k-1}.$$
\noindent Then define a function
$$\tau_\Delta : \{0, 1,\dots,k \} \to \mathbb{Z}$$
\noindent using the recurrence relation
$$\tau_\Delta (n+1) -\tau_\Delta (n)= \Delta (z_{n}),$$ 
\noindent for $ n=0,\dots,k-1$, together with the initial condition
$$\tau _\Delta (0) =0.$$
  
\noindent The graded root $\Gamma_\Delta$ is the one induced by the function $\tau_\Delta$ as explained in Section \ref{s:Calc}. Conversely, every graded root comes from an abstract delta sequence. Of course many different delta sequences may induce the same graded root.  Let $X^+$ denote the well-ordered set $X\cup \{z^+ \}$ where $z^+>z$ for all $z\in X$. We find it convenient to think of the domain of $\tau_\Delta$ as being $X^+$ rather than $\{0,1,\dots,|X|\}$. Hence sometimes we abuse notation and write $\tau_\Delta (z)$ for $z\in X^+$ but actually mean $\tau_\Delta (n(z))$ where $n:X^+\to \{0,\dots,|X|\}$ is the order preserving enumeration of $X^+$.   With this convention, we have 
\begin{equation}\label{equ:tau}
\tau_\Delta(z)=\underset{w<z}{\sum_{w\in X}}\Delta(w),\;\mathrm{for}\;\mathrm{all}\;z\in X^+.
\end{equation} 

Recall that every graded root $\Gamma$ has an associated graded $\mathbb{Z}[U]$-module $\mathbb{H}(\Gamma)$ from which one obtains finitely generated free $\mathbb{Z}$-modules $\mathbb{H}_{\mathrm{red}} (\Gamma)$ and $\widehat{\mathbb{H}}(\Gamma)$, as described in Section~\ref{s:Calc}.  Our aim now is to give formulas calculating the ranks of these modules directly in terms of a delta sequence $\Delta$ inducing $\Gamma$. Let
$$ S_\Delta := \{ x \in X : \Delta (x) >0\},$$ 
$$ Q_\Delta := \{ y \in X : \Delta (y) <0\},$$ 
$$\kappa_\Delta := - \sum_{y\in Q_\Delta} \Delta (y).$$

\noindent Let $c_\Delta$ denote the number of $n$ such that $\Delta (z_n)<0$ and $\Delta (z_{n+1})>0$, for $n=1,\dots,k-2$. If $\Delta (z_{k-1})<0$, we also add one to $c_\Delta$.  

\begin{proposition}\label{prop:rank} We have
\begin{enumerate}
	\item \label{prop:rankitem1} $\mathrm{rank} (\widehat{\mathbb{H}}(\Gamma_\Delta)) = 2 c_\Delta + 1$,
	\item \label{prop:rankitem2}  $\displaystyle \mathrm{rank}(\mathbb{H}_{\mathrm{red}} (\Gamma_\Delta)) = \kappa _\Delta + \min _{z\in X} \tau _\Delta (z)$.

\end{enumerate}
\end{proposition}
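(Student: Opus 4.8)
The plan is to analyze the structure of the graded root $\Gamma_\Delta$ directly from the sign pattern of the sequence $\Delta(z_0), \Delta(z_1), \ldots, \Delta(z_{k-1})$, since the geometry of $\Gamma_\Delta$ is entirely governed by the local maxima and minima of the associated function $\tau_\Delta$. The key observation is that $\tau_\Delta$ increases exactly along the positions where $\Delta > 0$ and decreases exactly where $\Delta < 0$. A local minimum of $\tau_\Delta$ therefore occurs precisely at those positions $z_n$ where $\Delta(z_{n-1}) < 0$ and $\Delta(z_n) > 0$ (a ``valley''), together with the initial value $\tau_\Delta(z_0) = 0$; a local maximum occurs at positions where $\Delta(z_{n-1}) > 0$ and $\Delta(z_n) < 0$ (a ``peak''), together with the terminal value $\tau_\Delta(z^+)$ when $\Delta(z_{k-1}) > 0$. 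I expect to prove both statements by translating these sign-change counts into the combinatorics of the tree.

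For part (\ref{prop:rankitem1}), I would first recall from Section \ref{s:Calc} that $\widehat{\mathbb{H}}(\Gamma) = \mathrm{Ker}(U) \oplus \mathrm{Coker}(U)[-1]$. The rank of $\mathrm{Ker}(U)$ counts the number of local minima of $\tau_\Delta$ (the ``leaves'' at the bottom, where $U$ kills a generator), while the rank of $\mathrm{Coker}(U)$ counts the local maxima (the branch points and the single infinite upward stem). The count $c_\Delta$ is defined to tally exactly the valleys among the interior indices $n = 1, \ldots, k-2$, with the convention of adding one when $\Delta(z_{k-1}) < 0$. The plan is to show that the number of local minima equals $c_\Delta + 1$ (the extra $+1$ coming from the forced initial minimum, since $\Delta(z_0) > 0$ by Definition \ref{def:deltaseq}), and that the number of local maxima is likewise $c_\Delta$, after accounting for the single infinite ray at the top. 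Summing $\mathrm{rank}\,\mathrm{Ker}(U) + \mathrm{rank}\,\mathrm{Coker}(U)$ should then yield $2c_\Delta + 1$. The bookkeeping at the two boundary indices $n = 0$ and $n = k-1$ is where the off-by-one terms must be reconciled carefully.

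For part (\ref{prop:rankitem2}), I would use $\mathbb{H}_{\mathrm{red}}(\Gamma) = \mathrm{Coker}(U^n)$ for large $n$, whose rank counts, with multiplicity, the vertices lying strictly below the ``merging height'' of the infinite stem — equivalently, the total number of edges one must climb from the bottom of the tree up to the point where all branches have joined the single infinite ray. Concretely, each local minimum at height $h$ contributes a chain of vertices until it merges with the rest of the tree, and summing these contributions gives a total that can be written as $\kappa_\Delta + \min_{z \in X} \tau_\Delta(z)$. The plan is to compute this rank as an alternating sum over the excursions of $\tau_\Delta$: the quantity $\kappa_\Delta = -\sum_{y \in Q_\Delta}\Delta(y)$ measures the total descent of $\tau_\Delta$, and since $\tau_\Delta$ starts at $0$ and eventually increases to $+\infty$, balancing total ascent against total descent and normalizing by the global minimum $\min_z \tau_\Delta(z)$ produces the stated formula. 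The main obstacle I anticipate is making precise the identification between $\mathrm{rank}\,\mathrm{Coker}(U^n)$ and the ``area'' counted by the descents — that is, verifying that each negative step of $\Delta$ contributes exactly its magnitude to the reduced rank while correctly subtracting off the portion of the tree that lies below the global minimum, which is what the $\min_z \tau_\Delta(z)$ correction encodes. I would settle this by induction on the number of valleys, peeling off one excursion of $\tau_\Delta$ at a time and tracking how both sides of the equation change.
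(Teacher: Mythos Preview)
Your plan for part~(\ref{prop:rankitem1}) is the paper's argument: the paper observes that the univalent vertices of $\Gamma_\Delta$ are exactly the local minima of $\tau_\Delta$, counts them as $c_\Delta+1$, and reads off $\rank(\ker U)=c_\Delta+1$ and $\rank(\coker U)=c_\Delta$ directly from the description of the $U$-action. Your bookkeeping of sign changes at the boundary indices is the same computation written out more explicitly.

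For part~(\ref{prop:rankitem2}) the routes differ. The paper gives no argument whatsoever and simply cites N\'emethi \cite[Corollary~3.7]{N}. Your proposal --- interpreting $\kappa_\Delta$ as the total descent of $\tau_\Delta$, identifying $\rank\mathbb{H}_{\mathrm{red}}(\Gamma_\Delta)$ with the count of vertices lying off the infinite stem, and then inducting on the number of excursions of $\tau_\Delta$ --- is a legitimate self-contained alternative, and the induction step (peeling off one valley and comparing both sides) is routine once set up. The trade-off is the obvious one: the paper's citation is a single line but outsources the work, while your argument costs a page of combinatorics but keeps the exposition independent of \cite{N}.
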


\begin{proof}
For part (\ref{prop:rankitem1}), observe that the number of univalent vertices of $\Gamma_\Delta$ is exactly $c_\Delta + 1$. From the description of the $U$ action, it is now clear that $\rank (\ker U)=c_\Delta + 1$ and $\rank(\coker U )= c_\Delta$. Part (\ref {prop:rankitem2}) is \cite[Corollary 3.7]{N}. 
\end{proof}

\subsection{Operations on delta sequences} In this subsection, we discuss some methods to generate new delta sequences out of a given one. 
Henceforth for a given delta sequence $(X,\Delta)$, we shall reserve the symbols $\tau,\; \Gamma,\; S,\; Q,\; \kappa$, and $c$ to denote the objects introduced in Section \ref{ss:def}. Whenever a delta sequence admits a decoration, corresponding objects pick up the same decoration. Also the subscript $\Delta$ will be dropped for brevity. For example, if $(X_1,\Delta _1)$ is a delta sequence, then $S_1$ denotes the set of elements in $X_1$ for which $\Delta_1$ is positive. 

\begin{definition}
Let $(X,\Delta)$ be a delta sequence. A \emph{delta subsequence} of  $(X,\Delta)$ is a delta sequence $(X_1,\Delta _1)$ where  $X_1\subset X$ and $\displaystyle \Delta_1=\Delta|_{X_1}$.
\end{definition}

\begin{proposition}\label{prop:hatcomp}
If $(X_1,\Delta_1)$ is a delta subsequence of $(X,\Delta)$ then $$\mathrm{rank}(\widehat{\mathbb{H}}(\Gamma_1)) \leq \mathrm{rank}(\widehat{\mathbb{H}}(\Gamma)).$$
\end{proposition}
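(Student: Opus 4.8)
The plan is to use the rank formula from Proposition~\ref{prop:rank}(\ref{prop:rankitem1}), which says that $\mathrm{rank}(\widehat{\mathbb{H}}(\Gamma_\Delta)) = 2c_\Delta + 1$. Thus the inequality $\mathrm{rank}(\widehat{\mathbb{H}}(\Gamma_1)) \leq \mathrm{rank}(\widehat{\mathbb{H}}(\Gamma))$ is equivalent to the purely combinatorial statement $c_1 \leq c$. So the entire problem reduces to comparing the number of sign changes (from negative to positive) in the two delta sequences. First I would recall the definition: $c_\Delta$ counts the number of indices $n$ with $\Delta(z_n) < 0$ and $\Delta(z_{n+1}) > 0$, plus one more if the final term $\Delta(z_{k-1})$ is negative. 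Intuitively, $c_\Delta$ measures the number of ``valleys'' of the sequence, which equals the number of leaves (univalent vertices) of the graded root minus one.

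The key observation is that a delta subsequence $(X_1, \Delta_1)$ is obtained by restricting $\Delta$ to a subset $X_1 \subseteq X$, keeping the same sign on each retained element, and inheriting the induced well-ordering. So I would argue that deleting elements from $X$ can only merge valleys or destroy them, never create new ones. Concretely, each valley of $\Delta_1$ corresponds to a maximal block of consecutive negative elements of $X_1$ that is immediately followed by a positive element (or that terminates the sequence). I would set up an injection from the valleys of $\Delta_1$ into the valleys of $\Delta$: given a negative-to-positive transition in $X_1$ occurring between some $x \in Q_1$ and the next element $x' \in S_1$, I would look at the elements of $X$ lying strictly between $x$ and $x'$ in the order on $X$. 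Since $x$ is negative and $x'$ is positive, somewhere along the chain from $x$ to $x'$ inside $X$ there must be a last negative element followed by a positive one, giving a genuine valley of $\Delta$; mapping the valley of $\Delta_1$ to that transition should be well-defined and injective because distinct transitions in $X_1$ are separated by the intervening positive element $x'$, which forces the associated valleys in $X$ to be distinct.

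I would then handle the terminal correction term separately: if $X_1$ ends in a negative element, that element is also negative in $X$ and is followed in $X$ only by elements that were deleted; tracing forward, either the sequence $X$ also terminates in a negative run (contributing the $+1$ to $c$) or there is a later positive element in $X$, which still produces a valley not already used by the injection. Either way the terminal contribution to $c_1$ is absorbed into a distinct contribution to $c$. Combining the injection on interior valleys with the bookkeeping for the terminal term yields $c_1 \leq c$, and hence the desired rank inequality.

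The main obstacle I anticipate is making the injection genuinely well-defined and verifying injectivity without double-counting, especially at the boundary: the interaction between the terminal $+1$ correction and the last interior valley requires care, since a negative run in $X_1$ at the very end might or might not be separated from an earlier valley after passing to $X$. Getting the case analysis clean --- rather than appealing vaguely to ``valleys can only merge'' --- is where the real work lies, though the underlying topological picture (the graded root $\Gamma_1$ is obtained from $\Gamma$ by a process that cannot increase the number of leaves) makes the conclusion unsurprising.
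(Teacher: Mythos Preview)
Your approach is exactly the paper's: reduce via Proposition~\ref{prop:rank}(\ref{prop:rankitem1}) to the inequality $c_1 \leq c$. The paper simply asserts this is clear and moves on, whereas you supply the (correct) injection-of-valleys argument in detail; so your proposal is a fully fleshed-out version of the same proof.
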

\begin{proof}
Clearly $c_1\leq c$. By Proposition \ref{prop:rank}, we are done.
\end{proof}

For the rank of $\mathbb{H}_{\mathrm{red}}(\Gamma_1)$, we shall prove a stronger inequality. To this end we introduce complementary subsequences. Suppose $(X_1,\Delta_1)$ is a delta subsequence of $(X,\Delta)$. From this we construct a new delta sequence by letting 
$X_2:=X\setminus X_1$, and $\displaystyle \Delta_2:= \Delta |_{X_2}$. The pair $(X_2,\Delta _2)$ may not satisfy Property (\ref{deltaseqit2}) of Definition \ref{def:deltaseq}, but we can modify it to become a delta sequence as follows: Let $x$ be the minimum of $S_2$. Remove all $y\in Q_2$ with $y\leq x$ from $X_2$. By abuse of notation we denote the resulting delta sequence by the same symbol, $(X_2,\Delta_2)$. This delta sequence is called the \emph{complementary delta subsequence} of $(X_1,\Delta_1)$ in $(X,\Delta)$.

\begin{proposition}\label{prop:compsub}
Let $(X,\Delta)$ be a delta sequence. Let $(X_1,\Delta_1)$ be a delta subsequence and $(X_2,\Delta_2)$  its complementary subsequence.  Then
$$ \mathrm{rank} (\mathbb{H}_{\mathrm{red}}(\Gamma_1)) + \mathrm{rank} (\mathbb{H}_{\mathrm{red}}(\Gamma_2)) \leq \mathrm{rank} (\mathbb{H}_{\mathrm{red}}(\Gamma)).$$
\end{proposition}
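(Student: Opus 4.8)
The plan is to use the rank formula from Proposition~\ref{prop:rank}(\ref{prop:rankitem2}), which asserts that $\rank(\mathbb{H}_{\mathrm{red}}(\Gamma_\Delta)) = \kappa_\Delta + \min_{z\in X}\tau_\Delta(z)$, and compare the three terms $\kappa + \min\tau$, $\kappa_1 + \min\tau_1$, and $\kappa_2 + \min\tau_2$ directly. The first step is to understand how $\kappa$ splits: since $X = X_1 \sqcup X_2$ as sets (up to the finitely many negative elements removed from $X_2$ during the normalization), the total negative weight $\kappa = -\sum_{y\in Q}\Delta(y)$ decomposes as a sum of the contribution from $Q_1$ and the contribution from $Q_2$, \emph{minus} the weight of the negative elements that were discarded when forming the complementary subsequence. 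Thus I expect an identity of the shape $\kappa = \kappa_1 + \kappa_2 + D$, where $D \geq 0$ is the total absolute value of the $\Delta$-values deleted from $X_2$ below its first positive element.

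The main work is then to control the two $\min\tau$ terms against $\min\tau$. Here I would unpack the formula \eqref{equ:tau}, namely $\tau_\Delta(z) = \sum_{w<z,\,w\in X}\Delta(w)$, which expresses each $\tau$-value as a partial sum of the $\Delta$-weights. Because $X_1$ and (the normalized) $X_2$ are subsets of $X$ carrying the restricted weights, $\tau_1$ and $\tau_2$ are partial sums over subsets of the terms summed by $\tau$. The crucial observation is that for any fixed cutoff $z$, the quantity $\tau(z)$ equals $\tau_1(z) + \tau_2(z)$ up to the correction coming from the deleted negative elements; so at the point where the overall minimum of $\tau$ is achieved, I can bound $\min\tau$ below by $\min\tau_1 + \min\tau_2$ plus the same deletion term that appears in the $\kappa$ splitting. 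The goal is to show that the deletion term $D$ cancels appropriately, so that
\[
\kappa_1 + \min\tau_1 + \kappa_2 + \min\tau_2 \leq \kappa + \min\tau,
\]
which is exactly the desired inequality after applying Proposition~\ref{prop:rank}.

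The hard part will be the bookkeeping around the normalization of $(X_2,\Delta_2)$: removing the elements $y\in Q_2$ with $y\leq x$ (where $x=\min S_2$) shifts both $\kappa_2$ and the partial sums $\tau_2(z)$, and I must verify that these two shifts are governed by the \emph{same} deletion quantity $D$ so they cancel rather than accumulate. Concretely, deleting a negative element $y$ increases $\tau_2$ on every $z>y$ by $|\Delta(y)|$ while decreasing $\kappa_2$ by $|\Delta(y)|$; I expect that for the elements deleted during normalization (all of which lie below every positive element of $X_2$, hence below where the minimum of the normalized $\tau_2$ can occur) these effects combine so that $\kappa_2 + \min\tau_2$ is unchanged under the deletion. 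Establishing this invariance of $\kappa_2 + \min\tau_2$ under the normalization step is the key lemma; once it is in place, the inequality reduces to the term-by-term comparison of partial sums, which is a routine consequence of the fact that interleaving the weights of $X_1$ and $X_2$ reconstructs the weights of $X$. I would also check the edge cases where $S_2 = \emptyset$ (so $X_2$ becomes empty after normalization and $\mathbb{H}_{\mathrm{red}}(\Gamma_2)=0$), since the formula must still hold trivially there.
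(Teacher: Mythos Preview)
Your plan is correct and follows essentially the same route as the paper. The paper also uses Proposition~\ref{prop:rank}(\ref{prop:rankitem2}), writes $\kappa = \kappa_1 + \kappa_2 - \sum_{y\in Q\setminus(Q_1\cup Q_2)}\Delta(y)$ (your $D$), proves the key inequality $\min\tau \geq \min\tau_1 + \min\tau_2 + \sum_{y\in Q\setminus(Q_1\cup Q_2)}\Delta(y)$ by evaluating $\tau$ at its minimizer $w_0$ and splitting the partial sum over $X_1$, $X_2$, and the deleted set, and then cancels $D$ exactly as you anticipate. Your only organizational difference is that you isolate the invariance of $\kappa_2+\min\tau_2$ under the normalization step as a separate lemma (reducing to the raw complement $X\setminus X_1$ where $\kappa=\kappa_1+\kappa_2'$ holds on the nose), whereas the paper carries the deletion term through both computations; the content is identical.
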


\begin{proof}
We use $\tau$, $\tau_1$, and $\tau_2$ for $\tau_\Delta$, $\tau_{\Delta_1}$, and $\tau_{\Delta_2}$ respectively.  First note that $S=S_1\cup S_2$, $Q\supseteq Q_1 \cup Q_2$, and
$$\kappa = \kappa_1 +\kappa _2 - \sum _{y\in Q\setminus (Q_1\cup Q_2)} \Delta (y).$$ 

\noindent Next we claim  that
\begin{equation} \label{eq:minineq}
\min \tau \geq \min \tau_1 + \min \tau _2 + \sum_{y \in Q\setminus(Q_1\cup Q_2)} \Delta (y).
\end{equation}

\noindent Let us first see why this inequality  finishes the proof. Rearranging the terms and adding $\kappa_1 +\kappa _2$ to both sides, we get
$$\min \tau + \kappa \geq \min \tau _1 + \min \tau _2 + \kappa _1 + \kappa_2.$$

\noindent By Proposition \ref{prop:rank} we are done.

\vspace{0.2cm}

To see why Inequality (\ref{eq:minineq}) holds, let $w_0\in X^+$ be an element where $\tau$ attains its minimum.   Then by Equation~(\ref{equ:tau})
\begin{eqnarray*}
\tau (w_0) &=& \underset{z< w_0}{\sum_{z\in X_1} }\Delta(z)+\underset{z< w_0}{\sum_{z\in X_2} }\Delta(z)+\underset{z< w_0}{\sum_{z\in X\setminus (X_1\cup X_2)} }\Delta(z)\\
 &=& \underset{z< w_0}{\sum_{z\in X_1} }\Delta(z)+\underset{z< w_0}{\sum_{z\in X_2} }\Delta(z)+ \underset{y<w_0}{\sum_{y\in Q\setminus (Q_1\cup Q_2)} }\Delta(y)\\
&\geq & \min  \tau_1 +  \min \tau_2 + \sum_{y\in Q\setminus (Q_1\cup Q_2)} \Delta(y),
\end{eqnarray*}
where the last inequality follows by noting that there exists $w_i\in X_i^+$ for $i=1,2$ such that
$$\underset{z< w_0}{\sum_{z\in X_i} }\Delta(z)=\tau(w_i).$$
\end{proof}

We shall define two more operations on delta sequences which are inverses of each other. Let  $(X,\Delta)$ be a delta sequence. Let $t$ be a positive integer  and $z\in X$ with $|\Delta (z)|\geq t$. From this we construct a new delta sequence $(X',\Delta ')$ as follows.  The set $X'$ is obtained by removing $z$ from $X$ and putting $t$ consecutive elements $z_1,\dots,z_t$  in its place. Now, choose non-zero integers $n_1,\dots,n_t$  each with the same sign as $\Delta(z)$, such that $n_1 + \dots + n_t = \Delta(z)$.  The new delta function $\Delta'$ agrees with $\Delta$ on $X\setminus \{z\}$ and it satisfies
$$\Delta ' (z_i) = n_i, \; \mathrm{for}\; i=1,\dots,t.$$
\noindent The delta sequence $(X',\Delta ')$ is called a \emph{refinement} of $(X,\Delta)$ at $z$. Conversely $(X,\Delta)$ is called the \emph{merge} of $(X',\Delta ')$ at $z_1,\dots,z_t$. 

\begin{proposition}\label{prop:refinementsmerges}
Refinements and merges do not change $\mathbb{H}_{\mathrm{red}}$ and $\widehat{\mathbb{H}}$.
\end{proposition}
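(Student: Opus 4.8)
The plan is to prove the stronger statement that a refinement leaves the graded root $\Gamma_\Delta$ itself unchanged; since $\mathbb{H}_{\mathrm{red}}(\Gamma_\Delta)$ and $\widehat{\mathbb{H}}(\Gamma_\Delta)$ are defined purely in terms of $\Gamma_\Delta$, this immediately yields that both are preserved as graded $\mathbb{Z}[U]$-modules, and in particular in rank. Because a merge is by definition the inverse of a refinement---if $(X,\Delta)$ is the merge of $(X',\Delta')$ at $z_1,\dots,z_t$, then $(X',\Delta')$ is the refinement of $(X,\Delta)$ at the merged element---it suffices to treat refinements. Recall from Section~\ref{s:Calc} that $\Gamma_\tau$ is completely determined by the subsequence of local maximum and local minimum values of $\tau$, so I would reduce the claim to showing that a refinement does not alter this subsequence of local extrema of $\tau_\Delta$.

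First I would compare $\tau_{\Delta'}$ with $\tau_\Delta$ directly from Equation~(\ref{equ:tau}). Let $z$ be the element being refined, let $z^-$ and $w$ denote its immediate predecessor and successor in $X^+$, and recall that in $X'$ the block $z_1<\dots<z_t$ occupies the slot of $z$, with the same predecessor $z^-$ and the same successor $w$. The elements of $X'$ below $z_1$ are exactly the elements of $X$ below $z$, and $\Delta'$ agrees with $\Delta$ there, so $\tau_{\Delta'}(z_1)=\tau_\Delta(z)$; telescoping the defining recurrence then gives
$$\tau_{\Delta'}(z_i)=\tau_\Delta(z)+\sum_{j=1}^{i-1} n_j,\qquad i=1,\dots,t,$$
and, summing all $t$ increments, $\tau_{\Delta'}(w)=\tau_\Delta(z)+\Delta(z)=\tau_\Delta(w)$. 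Thus $\tau_{\Delta'}$ agrees with $\tau_\Delta$ at every element the two share, and the only genuinely new values are $\tau_{\Delta'}(z_2),\dots,\tau_{\Delta'}(z_t)$.

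The decisive point is that every $n_j$ has the same sign as $\Delta(z)$, so the partial sums above are strictly monotone: the refined values interpolate strictly monotonically between $\tau_\Delta(z)$ and $\tau_\Delta(w)$, in the same direction as the single step that $\tau_\Delta$ takes across $z$. I would then argue that replacing one monotone step by a finer monotone run neither creates nor destroys a local extremum. Each new vertex $z_2,\dots,z_t$ has incoming and outgoing increments both of sign $\Delta(z)$, hence lies in the interior of a monotone run and is neither a local maximum nor a local minimum; meanwhile $z_1$ carries the value $\tau_\Delta(z)$, has incoming increment $\Delta(z^-)$ and outgoing increment $n_1$ of sign $\Delta(z)$, so it is an extremum of exactly the same type as $z$ was in $\tau_\Delta$, and symmetrically $w$ retains its type since its incoming increment changes from $\Delta(z)$ to $n_t$, which has the same sign. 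Consequently the ordered sequence of local extreme values of $\tau_{\Delta'}$ coincides with that of $\tau_\Delta$, so $\Gamma_{\Delta'}=\Gamma_\Delta$, which finishes the proof.

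I expect the only delicate part to be the bookkeeping of the extremum type at the two endpoints $z_1$ and $w$, together with the degenerate cases in which $z$ is the minimum or the maximum of $X$ (so that $z^-$ or $w$ is absent); in each such case one checks directly that the sign pattern governing the local extrema is unchanged, noting that when $z$ is the minimum the equality $\Delta(z)>0$ forces every $n_j>0$, so $(X',\Delta')$ is again a legitimate delta sequence. As an independent check on the $\widehat{\mathbb{H}}$ part, note that refinement manifestly preserves $c_\Delta$: splitting $\Delta(z)$ into a block of increments of one fixed sign creates no new negative-to-positive sign change and alters none at the two block boundaries, so $c_{\Delta'}=c_\Delta$ and $\mathrm{rank}\,\widehat{\mathbb{H}}$ is preserved by Proposition~\ref{prop:rank}.
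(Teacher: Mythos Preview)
Your argument is correct and is essentially a detailed expansion of what the paper has in mind: the paper's proof is the single line ``This follows easily from the definitions,'' and your verification that the refined $\tau$ interpolates strictly monotonically between $\tau_\Delta(z)$ and $\tau_\Delta(w)$---hence leaves the sequence of local extrema, and therefore the graded root itself, unchanged---is precisely the content behind that line. In fact you establish the slightly stronger (and expected) statement $\Gamma_{\Delta'}=\Gamma_\Delta$, from which the proposition is immediate.
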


\begin{proof}
This follows easily from the definitions.
\end{proof}

\subsection{Isomorphisms and embeddings}

We will define various kinds of maps between delta sequences and study their properties.

\begin{definition}\label{def:morph}
 A \emph{morphism} between delta sequences $(X_1,\Delta _1)$ and $(X_2,\Delta _2)$ is a map $\phi:X_1\to X_2$ with $\phi(S_1)\subseteq S_2$ and $\phi(Q_1)\subseteq Q_2$. A morphism $\phi$ is called an \emph{isomorphism} if it is an order preserving bijection satisfying $\Delta_2(\phi(z))=\Delta_1(z)$ for all $z\in X_1$. 
\end{definition}

\noindent Clearly isomorphic delta sequences induce the same graded root. Consequently we have the following.

\begin{proposition}\label{prop:iso}
If $(X_1,\Delta _1)$ and $(X_2,\Delta_2)$ are isomorphic delta sequences then  $\mathrm{rank}(\mathbb{H}_{\mathrm{red}}(\Gamma_1))=\mathrm{rank}(\mathbb{H}_{\mathrm{red}}(\Gamma_2)) $ and $\mathrm{rank}(\widehat{\mathbb{H}}(\Gamma_1))=\mathrm{rank}(\widehat{\mathbb{H}}(\Gamma_2))$.
\end{proposition}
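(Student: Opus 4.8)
The plan is to reduce the proposition to the single observation that an isomorphism of delta sequences produces the \emph{same} sequence of $\tau$-values, and hence the same graded root. Let $\phi : X_1 \to X_2$ be the given isomorphism, so that $\phi$ is an order-preserving bijection with $\Delta_2(\phi(z)) = \Delta_1(z)$ for all $z \in X_1$. Extending $\phi$ to an order-preserving bijection $X_1^+ \to X_2^+$ (sending the formal top element to the formal top element), I would first verify that $\tau_{\Delta_1}(z) = \tau_{\Delta_2}(\phi(z))$ for every $z \in X_1^+$.

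This identity is immediate from Equation~(\ref{equ:tau}): since $\phi$ is an order-preserving bijection, the condition $w < \phi(z)$ for $w \in X_2$ is equivalent to $w = \phi(w')$ for some $w' \in X_1$ with $w' < z$, and therefore
$$\tau_{\Delta_2}(\phi(z)) = \underset{w < \phi(z)}{\sum_{w \in X_2}} \Delta_2(w) = \underset{w' < z}{\sum_{w' \in X_1}} \Delta_2(\phi(w')) = \underset{w' < z}{\sum_{w' \in X_1}} \Delta_1(w') = \tau_{\Delta_1}(z),$$
using $\Delta_2 \circ \phi = \Delta_1$ in the penultimate step. Thus $\tau_{\Delta_1}$ and $\tau_{\Delta_2}$ agree after the order-preserving reindexing, so they carry the same ordered list of values and in particular the same subsequence of local maxima and minima. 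By the construction recalled in Section~\ref{s:Calc}, this means $\Gamma_1$ and $\Gamma_2$ are identical graded roots, whence $\mathbb{H}(\Gamma_1) \cong \mathbb{H}(\Gamma_2)$ as graded $\mathbb{Z}[U]$-modules and the desired equalities of ranks of $\mathbb{H}_{\mathrm{red}}$ and $\widehat{\mathbb{H}}$ follow at once.

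Alternatively---and this is perhaps the most economical route for the rank statement alone---I could invoke Proposition~\ref{prop:rank} directly. Because $\phi$ is a bijection preserving $\Delta$-values, it restricts to bijections $S_1 \to S_2$ and $Q_1 \to Q_2$, so $\kappa_1 = \kappa_2$; it is order- and sign-preserving, so the sign-change count together with its boundary correction gives $c_1 = c_2$; and by the computation above $\min \tau_{\Delta_1} = \min \tau_{\Delta_2}$. Feeding these equalities into the formulas $\mathrm{rank}(\widehat{\mathbb{H}}(\Gamma)) = 2c + 1$ and $\mathrm{rank}(\mathbb{H}_{\mathrm{red}}(\Gamma)) = \kappa + \min \tau$ yields the proposition.

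There is no genuine obstacle here; the content is purely a matter of unwinding the definitions. The only point requiring a little care is the bookkeeping with $X^+$ and the order-preserving enumeration $n : X^+ \to \{0, \dots, |X|\}$ that implicitly underlies the notation $\tau_\Delta(z)$: one must check that $\phi$ intertwines the two enumerations, which is automatic since $\phi$ is an order-preserving bijection. Everything else is a formal consequence of the fact that isomorphic delta sequences carry identical combinatorial data.
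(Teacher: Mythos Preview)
Your argument is correct and follows the same approach as the paper: the paper simply notes (just before stating the proposition) that isomorphic delta sequences induce the same graded root, and your proof spells out exactly why this is so. Your alternative route via Proposition~\ref{prop:rank} is also valid but unnecessary once the graded roots are identified.
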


In general, isomorphisms are difficult to construct. Most of the time we are content with isomorphisms up to refinements. We say that two delta sequences $(X_1,\Delta_1)$ and $(X_2,\Delta_2)$ are \emph{equivalent} if they are isomorphic after a sequence of refinements of each.

\begin{definition}\label{def:embed}
An \emph{embedding} of  $(X_1,\Delta_1)$ into $(X_2,\Delta_2)$ is a morphism $\phi: (X_1,\Delta_1)\to (X_2,\Delta _2)$ satisfying
\begin{enumerate}
\item \label{defn:emb1} For every $x\in S_1$ and $y\in Q_1$, we have $x < y$ if and only if $\phi (x) < \phi (y)$.
\item \label{defn:emb2} For every $z\in \phi(X_1),$ $\displaystyle |\Delta _2 (z)| \geq \sum _{w\in \phi^{-1}(z)} |\Delta_1(w)|$.
\end{enumerate}   
\end{definition}
 Property (\ref{defn:emb1}) of Definition~\ref{def:embed} says that the order of elements of $S_1$ relative to elements of $Q_1$ is preserved under an embedding.   Note also that unlike what the name might suggest,  an embedding of delta sequences need not be injective. The following result says we can achieve injectivity after we do appropriate modifications to the domain and target delta sequences. 

\begin{theorem}\label{theo:embedd}
If there is an embedding $\phi :(X_1,\Delta _1) \to (X_2,\Delta_2)$ then there are refinements $(X'_1,\Delta'_1)$ and $(X'_2,\Delta'_2)$ of $(X_1,\Delta _1)$ and $(X_2,\Delta_2)$ respectively such that $(X'_1,\Delta'_1)$ is isomorphic to a delta subsequence  of $(X'_2,\Delta'_2)$. In other words, if $\Delta_1$ embeds into $\Delta_2$, then $\Delta_1$ is equivalent to a delta subsequence of $\Delta _2$.

\end{theorem}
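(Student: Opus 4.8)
The plan is to construct the required refinements of $(X_1,\Delta_1)$ and $(X_2,\Delta_2)$ so as to convert the (possibly non-injective) embedding $\phi$ into a genuine injection whose image is a delta subsequence. The key observation is that the two defining properties of an embedding already give us exactly the numerical and order-theoretic room we need: Property~(\ref{defn:emb2}) of Definition~\ref{def:embed} tells us that for each $z \in \phi(X_1)$, the target value $|\Delta_2(z)|$ is at least the sum $\sum_{w \in \phi^{-1}(z)} |\Delta_1(w)|$ of the absolute values of all domain elements mapping to $z$, which is precisely the size constraint that lets us \emph{refine} the single element $z$ in $X_2$ into enough consecutive pieces to accommodate all its preimages. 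First I would refine $(X_2,\Delta_2)$: for each $z \in \phi(X_1)$ with preimages $w_1 < \dots < w_m$ under $\phi$, I would refine $z$ into $m$ consecutive elements (plus possibly one extra slack element to absorb the difference when the inequality in~(\ref{defn:emb2}) is strict), assigning the refined values $\Delta_1(w_1), \dots, \Delta_1(w_m)$ to these pieces. Because all $\Delta_1(w_j)$ share the same sign as $\Delta_2(z)$ (as $\phi$ is a morphism, sending $S_1$ into $S_2$ and $Q_1$ into $Q_2$), this refinement is legitimate in the sense of the refinement operation defined before Proposition~\ref{prop:refinementsmerges}.

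\emph{Next} I would define the candidate isomorphism: send each $w_j \in X_1$ to the corresponding refined piece in $X'_2$ carrying the value $\Delta_1(w_j)$. By construction this map preserves the $\Delta$-values and is injective, so its image is a delta subsequence of $(X'_2,\Delta'_2)$ (after discarding the slack pieces, which simply become additional elements of $X'_2$ not in the image). The one subtlety is that this map must be \emph{order preserving} in order to be an isomorphism of delta sequences onto its image. This is where Property~(\ref{defn:emb1}) enters: it guarantees that the relative order between positive and negative elements is respected by $\phi$. The remaining ordering constraints---between two positive elements, or between two negative elements---are not directly controlled by $\phi$, and this is the part that forces the \emph{refinement of the domain} $(X_1,\Delta_1)$ as well. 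When several domain elements of the same sign map to a single target element $z$ but appear in an order inconsistent with the order of the refined pieces, I would instead \emph{merge} those domain elements into a single element (using the inverse operation) and then refine it to match the chosen internal order of the refined block at $z$; by Proposition~\ref{prop:refinementsmerges} neither operation changes $\mathbb{H}_{\mathrm{red}}$ or $\widehat{\mathbb{H}}$, so this is harmless.

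\textbf{The main obstacle} I anticipate is precisely the bookkeeping of the global order. Property~(\ref{defn:emb1}) only constrains the interleaving of $S_1$ against $Q_1$; within a single fiber $\phi^{-1}(z)$ the domain elements may be arranged in an order that clashes with the natural left-to-right order one wants on the refined pieces of $z$, and across different fibers one must check that the blocks stitch together monotonically. The careful point is to verify that the order on $X_1$, the order on the refined set $X'_2$, and the sign-separation enforced by~(\ref{defn:emb1}) are mutually compatible so that a single order-preserving bijection onto a subsequence can be assembled. I expect the cleanest way to organize this is to process the elements of $X_1$ in increasing order, maintaining a pointer into $X'_2$, and to argue that Property~(\ref{defn:emb1}) prevents any ``crossing'' that would obstruct monotonicity; whenever a potential clash within a fiber arises, the merge--refine trick on the domain resolves it without affecting the invariants. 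Once injectivity, value-preservation, and order-preservation are all in hand, the conclusion that $(X'_1,\Delta'_1)$ is isomorphic to a delta subsequence of $(X'_2,\Delta'_2)$---equivalently, that $\Delta_1$ is equivalent to a delta subsequence of $\Delta_2$---follows immediately from the definitions.
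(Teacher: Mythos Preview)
Your proposal is correct and follows essentially the same approach as the paper: refine $(X_2,\Delta_2)$ at each $z\in\phi(X_1)$ using Property~(\ref{defn:emb2}) to achieve injectivity and value-matching, then repair order-preservation on the domain via merges and refinements. The paper organizes the second step slightly more cleanly than your sketch: rather than distinguishing ``within a fiber'' from ``across fibers,'' it observes that Property~(\ref{defn:emb1}) forces any order violation to occur between \emph{consecutive} elements of $X_1$ lying in the same set $S_1$ or $Q_1$, and then fixes each such adjacent pair by a single merge-and-refine transposition---so your anticipated ``main obstacle'' dissolves once you note that cross-fiber violations within a same-sign block are handled by exactly the same local swap.
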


\begin{proof}
We first make $\phi$ injective using refinements on $(X_2,\Delta_2)$. Let $z\in\phi (X_1)$ such that $|\phi ^{-1}(z)| \geq 2$. Write $\phi^{-1}(z)=\{w_1,\dots,w_t\}$. We refine $\Delta_2$ at $z$ to $t+1$ elements $z_1,\dots,z_{t+1}$ such that $\Delta_2(z_n)=\Delta_1(w_n)$ for all $n=1,\dots,t$. This is possible by Property (\ref{defn:emb2}) of Definition \ref{def:embed}. We extend $\phi$ to this refinement by sending $w_n$ to $z_n$ for all $n=1,\dots,t$. Repeating this process for every $z\in \phi (X_1)$ with $|\phi^{-1}(z)| \geq 2$, we get a refined delta sequence $(X'_2,\Delta'_2)$, and an injective embedding $\phi':(X_1,\Delta_1)\to (X'_2,\Delta'_2)$.  If necessary, we can refine $(X'_2,\Delta'_2)$ further to achieve that $\Delta'_2(\phi'(w))=\Delta  _1(w)$ for all $w\in X_1$; this is again guaranteed by Property (\ref{defn:emb2}) of Definition \ref{def:embed}.

\vspace{0.2cm}

The morphism $\phi '$ may not be order preserving, so we need to modify it to have this property.  Since $\phi'$ is an embedding, the relative positions of elements of $S_1$ do not change with respect to the elements of $Q_1$. On the other hand, $\phi'$ can rearrange some consecutive elements in $S_1$ (and respectively in $Q_1$). We can permute the order of these elements by a sequence of refinements and merges and re-defining $\phi'$ accordingly. To see that this is possible, suppose $x,\tilde{x}\in S_1$ with $x<\tilde{x}$ but $\phi ' (\tilde{x})<\phi '(x)$. Suppose also that there is no $w\in X_1$ with $x<w<\tilde{x}$. Merge $x$ and $\tilde{x}$ together and subsequently refine the resulting element into $x'$, $\tilde{x}'$ with $x'<\tilde{x}'$ to obtain a new delta sequence $(X'_1,\Delta'_1)$ so that $\Delta '_1(x')=\Delta_1(\tilde{x})$ and $\Delta ' _1(\tilde{x}')=\Delta_1(x)$. Define a morphism $\phi'':(X'_1,\Delta'_1) \to (X'_2,\Delta'_2)$ so that $\phi '' (x')=\phi' (\tilde{x})$, $\phi''(\tilde{x}')=\phi'(x)$, and $\phi''$ agrees with $\phi'$ otherwise. Hence $\phi ''$ is still an injective embedding and now it preserves the order of the elements $x'$ and $\tilde{x}'$. Repeating this process for every pair in $S_1$ (and respectively $Q_1$) where the order preserving fails, eventually we make $\phi ''$ order preserving, since transpositions generate the whole permutation group.  We use the same symbols $\phi'': (X'_1,\Delta'_1) \to (X'_2,\Delta'_2)$ to denote the resulting morphism after making all necessary adjustments.  

\vspace{0.2cm}

Now $\phi''$ becomes an order preserving injective morphism with $\Delta'_2(\phi''(w))=\Delta ' _1(w)$ for all $x\in X'_1$. Hence it is an  isomorphism onto its image.
\end{proof}

In light of Theorem~\ref{theo:embedd}, given an embedding of $\Delta_1$ into $\Delta$, we can find a delta subsequence of $\Delta$ equivalent to $\Delta_1$.  By abuse of terminology, we will refer to the corresponding complementary subsequence as the complementary delta subsequence of $\Delta_1$.  Note that the complementary subsequence implicitly depends on the choice of embedding.  
 
\begin{corollary}\label{cor:embed}
Suppose $\Delta_1$ embeds into $\Delta$ and let $\Delta_2$ denote the complementary delta subsequence.  Then
\begin{enumerate}
	\item $\mathrm{rank}(\widehat{\mathbb{H}}(\Gamma_1)) \leq \mathrm{rank}(\widehat{\mathbb{H}}(\Gamma))$,
	\item $\mathrm{rank} (\mathbb{H}_{\mathrm{red}}(\Gamma _1)) + \mathrm{rank} (\mathbb{H}_{\mathrm{red}}(\Gamma _2)) \leq \mathrm{rank} (\mathbb{H}_{\mathrm{red}}(\Gamma ))$.
\end{enumerate}

\end{corollary}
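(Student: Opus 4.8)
The plan is to reduce both inequalities to the subsequence-versus-complement estimates already in hand, namely Proposition~\ref{prop:hatcomp} and Proposition~\ref{prop:compsub}, using that refinements and merges leave the relevant ranks unchanged. First I would invoke Theorem~\ref{theo:embedd} to upgrade the embedding $\phi$ to an honest subsequence relation: it produces a refinement $\Delta_1'$ of $\Delta_1$ and a refinement $\Delta'$ of $\Delta$, together with an isomorphism of $\Delta_1'$ onto a genuine delta subsequence $\tilde{\Delta}_1$ of $\Delta'$. By the definition of the complementary delta subsequence attached to the embedding (the abuse of terminology set up just before the corollary), $\Delta_2$ is exactly the complement $\tilde{\Delta}_2$ of $\tilde{\Delta}_1$ inside $\Delta'$. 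Thus the entire statement now lives inside the single ambient delta sequence $\Delta'$, where the earlier propositions apply verbatim.

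For part (1), I would apply Proposition~\ref{prop:hatcomp} to the subsequence $\tilde{\Delta}_1 \subseteq \Delta'$, which gives $\mathrm{rank}(\widehat{\mathbb{H}}(\Gamma_{\tilde{\Delta}_1})) \leq \mathrm{rank}(\widehat{\mathbb{H}}(\Gamma'))$. Since $\tilde{\Delta}_1$ is isomorphic to the refinement $\Delta_1'$ of $\Delta_1$ and $\Delta'$ is a refinement of $\Delta$, Proposition~\ref{prop:iso} and Proposition~\ref{prop:refinementsmerges} identify the left-hand rank with $\mathrm{rank}(\widehat{\mathbb{H}}(\Gamma_1))$ and the right-hand rank with $\mathrm{rank}(\widehat{\mathbb{H}}(\Gamma))$, which is the claimed inequality.

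For part (2), the shape of the argument is the same but uses Proposition~\ref{prop:compsub} applied to $\tilde{\Delta}_1$ and its complement $\tilde{\Delta}_2 = \Delta_2$ inside $\Delta'$:
$$\mathrm{rank}(\mathbb{H}_{\mathrm{red}}(\Gamma_{\tilde{\Delta}_1})) + \mathrm{rank}(\mathbb{H}_{\mathrm{red}}(\Gamma_{\tilde{\Delta}_2})) \leq \mathrm{rank}(\mathbb{H}_{\mathrm{red}}(\Gamma')).$$
Proposition~\ref{prop:iso} and Proposition~\ref{prop:refinementsmerges} again convert the first summand into $\mathrm{rank}(\mathbb{H}_{\mathrm{red}}(\Gamma_1))$ and the right-hand side into $\mathrm{rank}(\mathbb{H}_{\mathrm{red}}(\Gamma))$, while the middle term is literally $\mathrm{rank}(\mathbb{H}_{\mathrm{red}}(\Gamma_2))$ since $\Delta_2 = \tilde{\Delta}_2$. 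This yields the desired estimate.

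The main obstacle I anticipate is bookkeeping rather than a genuine difficulty: I must be careful that the complementary subsequence $\Delta_2$ named in the statement really is the complement $\tilde{\Delta}_2$ produced inside the refined sequence $\Delta'$ — which is precisely how $\Delta_2$ was defined, and why the remark that it depends on the embedding matters — and that the three ranks transform correctly under the refinements and merges of Theorem~\ref{theo:embedd}. The latter is exactly the content of Proposition~\ref{prop:refinementsmerges} together with Proposition~\ref{prop:iso}, so once the identification of $\Delta_2$ with $\tilde{\Delta}_2$ is made explicit, the proof is a matter of assembling the previously established results.
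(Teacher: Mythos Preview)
Your proposal is correct and follows essentially the same approach as the paper's proof, which simply cites the combination of Proposition~\ref{prop:hatcomp} (respectively Proposition~\ref{prop:compsub}) with Proposition~\ref{prop:refinementsmerges}, Proposition~\ref{prop:iso}, and Theorem~\ref{theo:embedd}. You have spelled out the bookkeeping in more detail than the paper does, and your attention to the fact that $\Delta_2$ is by definition the complement inside the refined sequence $\Delta'$ is exactly the point that makes the argument go through.
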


\begin{proof}
The first inequality (second inequality respectively) follows from combining Proposition~\ref{prop:hatcomp} (respectively Proposition~\ref{prop:compsub}) with Proposition~\ref{prop:refinementsmerges}, Proposition~\ref{prop:iso}, and  Theorem~\ref{theo:embedd}.
\end{proof}

\begin{corollary}\label{cor:disjointemb}
If $(X_1,\Delta _1),\dots,(X_p,\Delta_p)$  can be disjointly embedded into $(X,\Delta)$ then 
$$\sum_{j=1}^p \mathrm{rank}(\mathbb{H}_{\mathrm{red}}(\Gamma_j)) \leq \mathrm{rank}( \mathbb{H}_{\mathrm{red}}(\Gamma)). $$
\end{corollary}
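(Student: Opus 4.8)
The plan is to argue by induction on $p$, peeling off one embedding at a time and passing to a complementary subsequence. Throughout I take ``disjointly embedded'' to mean that the embeddings $\phi_j \colon (X_j,\Delta_j)\to(X,\Delta)$, $j=1,\dots,p$, have pairwise disjoint images $\phi_j(X_j)\subseteq X$. The base case $p=1$ is immediate from Corollary~\ref{cor:embed}: since $\mathrm{rank}(\mathbb{H}_{\mathrm{red}}(\Gamma_2))\ge 0$, part (2) of that corollary gives $\mathrm{rank}(\mathbb{H}_{\mathrm{red}}(\Gamma_1)) \le \mathrm{rank}(\mathbb{H}_{\mathrm{red}}(\Gamma))$. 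For the inductive step I would single out $\phi_1$ and invoke Theorem~\ref{theo:embedd}: after suitable refinements of $\Delta_1$ and of $\Delta$, the sequence $\Delta_1$ is realized as a genuine delta subsequence of the refined $\Delta$. Writing $\Delta'$ for its complementary delta subsequence, Proposition~\ref{prop:compsub} yields
\[
\mathrm{rank}(\mathbb{H}_{\mathrm{red}}(\Gamma_1)) + \mathrm{rank}(\mathbb{H}_{\mathrm{red}}(\Gamma')) \le \mathrm{rank}(\mathbb{H}_{\mathrm{red}}(\Gamma)).
\]

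The heart of the argument is to show that $\Delta_2,\dots,\Delta_p$ remain disjointly embedded, now into $\Delta'$; the inductive hypothesis then gives $\sum_{j=2}^p \mathrm{rank}(\mathbb{H}_{\mathrm{red}}(\Gamma_j)) \le \mathrm{rank}(\mathbb{H}_{\mathrm{red}}(\Gamma'))$, and combining with the displayed inequality closes the induction. To establish this I would verify two things. First, the refinements of $\Delta$ used to realize $\Delta_1$ do not disturb the other embeddings: inspecting the proof of Theorem~\ref{theo:embedd}, the target $\Delta$ is only ever refined at elements of $\phi_1(X_1)$, which are disjoint from each $\phi_j(X_j)$ by hypothesis; hence each $\phi_j$ with $j\ge 2$ transports unchanged, with the same image, into the refined sequence, and (via Proposition~\ref{prop:refinementsmerges}) this costs nothing in rank. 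Second, and this is the key point, I must check that these images land inside $\Delta'$ rather than among the negative elements deleted when forming the complement. Here I would use Property~(\ref{defn:emb1}) of Definition~\ref{def:embed} together with Definition~\ref{def:deltaseq}(\ref{deltaseqit2}): the minimum element $s_0^{(j)}$ of $X_j$ lies in $S_j$, so $\phi_j(s_0^{(j)})$ is a positive element of $\Delta'$ and therefore $\ge \min S'$, the threshold $x$ at which the complement is truncated; since every $q\in Q_j$ satisfies $s_0^{(j)}<q$, Property~(\ref{defn:emb1}) forces $\phi_j(q) > \phi_j(s_0^{(j)}) \ge x$. Thus no image of a negative element of $X_j$ lies at or below the truncation threshold, so $\phi_j$ restricts to a bona fide embedding into $\Delta'$, and disjointness of images is inherited verbatim.

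The main obstacle I anticipate is precisely this bookkeeping around the complementary subsequence: one must confirm both that the refinements chosen to place $\Delta_1$ as a subsequence can be taken compatibly with the remaining embeddings (handled by the observation that refinement of the target touches only $\phi_1(X_1)$), and that the front-end deletion of negative elements in the complement never swallows part of another $\Delta_j$ (handled by the order-preservation Property~(\ref{defn:emb1}), which confines each $\phi_j(Q_j)$ strictly above the truncation threshold). Once these two points are secured, the induction proceeds cleanly, and no further estimate beyond Proposition~\ref{prop:compsub} and Corollary~\ref{cor:embed} is required.
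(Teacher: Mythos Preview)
Your proposal is correct and follows essentially the same inductive strategy as the paper's proof. The paper's argument is terser---it simply asserts that disjointness ensures the remaining $(X_1,\Delta_1),\dots,(X_{p-1},\Delta_{p-1})$ embed into the complementary subsequence of $(X_p,\Delta_p)$ and then invokes Corollary~\ref{cor:embed} with induction---whereas you have supplied the bookkeeping the paper leaves implicit: that the refinements in Theorem~\ref{theo:embedd} touch only $\phi_1(X_1)$, and that the front-end truncation in forming the complement cannot swallow any $\phi_j(Q_j)$ because Property~(\ref{defn:emb1}) forces each such image strictly above $\phi_j(s_0^{(j)})\in S'$.
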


\begin{proof}
The disjointness ensures that $(X_1,\Delta _1),\dots,(X_{p-1},\Delta_{p-1})$ embed into the complementary subsequence of $(X_p, \Delta_p)$. Now use Corollary \ref{cor:embed} and do induction on $p$.   
\end{proof}
\subsection{Right-veering maps and immersions} We now relax the condition on the preservation of  orderings of our morphisms. Though we can still prove rank inequalities  for $\mathbb{H}_{\mathrm{red}}$ under these new kinds of morphisms, we need to give up the rank inequality for $\widehat{\mathbb{H}}$.

\begin{definition}\label{def:immn}
A \emph{right-veering morphism} between delta sequences $(X_1,\Delta_1)$ and $(X_2,\Delta_2)$ is a bijective morphism $\phi :(X_1,\Delta_1) \to (X_2,\Delta_2)$ such that
\begin{enumerate}
\item \label{def:immn1}Both of the maps $\phi|_{S_1}$ and $\phi|_{Q_1}$ are order preserving,
\item \label{def:immn2}For all $x\in S_1$, $y\in Q_1$ with $x< y$, we have $\phi(x)< \phi (y)$,
\item \label{def:immn3}$\Delta_1(z)=\Delta _2(\phi (z))$ for all $z\in X_1$. 
\end{enumerate}
\end{definition}

Note that a right-veering morphism $\phi$ is almost  the same thing as an isomorphism except that one may have elements $x\in S_1$, $y\in Q_1$ with $y\leq x$ and $\phi(x)\leq \phi(y)$. In other words $\phi$ ``moves'' the elements of $Q_1$ to the ``right'' of elements of $S_1$.

\begin{proposition}\label{prop:rightveer}
If there is a right-veering morphism $\phi:(X_1,\Delta_1)\to(X_2,\Delta_2)$ then 
$$\mathrm{rank} (\mathbb{H}_{\mathrm{red}}(\Gamma _1))\leq \mathrm{rank} (\mathbb{H}_{\mathrm{red}}(\Gamma _2)).$$
\end{proposition}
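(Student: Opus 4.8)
The plan is to reduce the statement to the rank formula of Proposition~\ref{prop:rank}(\ref{prop:rankitem2}), which expresses $\mathrm{rank}(\mathbb{H}_{\mathrm{red}}(\Gamma_\Delta))$ as $\kappa_\Delta + \min\tau_\Delta$, and then to compare the two summands separately. First I would observe that a right-veering morphism $\phi$ is a bijection carrying $S_1$ onto $S_2$ and $Q_1$ onto $Q_2$ with $\Delta_1 = \Delta_2\circ\phi$ by Property~(\ref{def:immn3}); this immediately gives $\kappa_1 = \kappa_2$, since both equal $-\sum\Delta$ over the negative elements. Consequently the inequality we want collapses to the single claim $\min\tau_1 \leq \min\tau_2$, where, following the convention of Section~\ref{ss:def} used in Proposition~\ref{prop:compsub}, the minima are taken over $X_1^+$ and $X_2^+$.

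To establish $\min\tau_1\leq\min\tau_2$, I would show that every value of $\tau_2$ dominates some value of $\tau_1$: for each $z\in X_2^+$ I would produce $z'\in X_1^+$ with $\tau_1(z')\leq\tau_2(z)$. Writing the down-set $P=\{w\in X_2: w<z\}$ and pulling it back, I set $A=\phi^{-1}(P\cap S_2)\subseteq S_1$ and $B=\phi^{-1}(P\cap Q_2)\subseteq Q_1$, so that Property~(\ref{def:immn3}) gives $\tau_2(z)=\sum_{w\in A}\Delta_1(w)+\sum_{w\in B}\Delta_1(w)$. Because $\phi|_{S_1}$ and $\phi|_{Q_1}$ are order-preserving bijections by Property~(\ref{def:immn1}), both $A$ and $B$ are down-sets, in $S_1$ and $Q_1$ respectively. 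If $B=\emptyset$, then $\tau_2(z)=\sum_{w\in A}\Delta_1(w)\geq 0$, which already bounds $\min\tau_1$ from below (recall $\tau_1$ vanishes at the minimum of $X_1$, so $\min\tau_1\leq 0$). Otherwise I would take $z'$ to be the successor in $X_1^+$ of $q_b:=\max B$.

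The heart of the argument, and the step I expect to require the most care, is verifying that this $z'$ works; it is precisely here that Property~(\ref{def:immn2}) is used. Since $B$ is a down-set in $Q_1$ with maximum $q_b$, the negative elements of $X_1$ lying below $z'$ are exactly $B$, so their $\Delta_1$-contribution equals the $B$-term of $\tau_2(z)$. For the positive part I would argue that every $x\in S_1$ with $x<q_b$ satisfies $\phi(x)<\phi(q_b)<z$ by Property~(\ref{def:immn2}) (together with $\phi(q_b)\in P$), hence $x\in A$; thus the set of positive elements below $z'$ is contained in $A$, and summing the positive values $\Delta_1$ over a subset can only decrease the total. Combining these observations yields $\tau_1(z')=\sum_{x<z'}\Delta_1(x)+\sum_{w\in B}\Delta_1(w)\leq\sum_{w\in A}\Delta_1(w)+\sum_{w\in B}\Delta_1(w)=\tau_2(z)$. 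Taking the minimum over all $z\in X_2^+$ then gives $\min\tau_1\leq\min\tau_2$, and with $\kappa_1=\kappa_2$ the proof is complete.
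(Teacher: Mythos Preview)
Your proof is correct and follows essentially the same strategy as the paper: reduce via Proposition~\ref{prop:rank}(\ref{prop:rankitem2}) to $\min\tau_1\le\min\tau_2$ after noting $\kappa_1=\kappa_2$, and establish this by producing, for each $z\in X_2^+$, a point $z'\in X_1^+$ with $\tau_1(z')\le\tau_2(z)$ by comparing the $S$- and $Q$-contributions separately. The only difference is the choice of comparison point---the paper takes $z'=\phi^{-1}(z)$ (handling $z=z_2^+$ separately), whereas you take $z'$ to be the successor of $\max B$; both choices work for the same reason, namely Properties~(\ref{def:immn1}) and~(\ref{def:immn2}).
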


\begin{proof}
First observe that $\kappa_1=\kappa_2$ by Property (\ref{def:immn3}) of Definition \ref{def:immn} and the fact that $\phi$ is a bijective morphism. Hence by Proposition \ref{prop:rank}, it suffices to show that  $\min \tau_1 \leq \min \tau_2$. To do this we shall show that given any $z_2\in X_2^+$ there exists $z_1\in X_1^+$ such that $\tau_1(z_1)\leq \tau_2(z_2)$.  Let $z_2\in X_2^+$ be given. If $z_2$ is the maximal element $z_2^+\in X_2^+$ then we choose $z_1$ to be the maximal element $z_1^+\in X_1^+$, and we have $\tau(z_1)=\tau(z_2)$ by Property (\ref{def:immn3}) and the fact that  $\phi$ is a bijection. Otherwise $z_2\in X_2$, and we  choose $z_1:=\phi^{-1}(z_2)$.

\vspace{0.2cm}

Using Properties (\ref{def:immn1}) and (\ref{def:immn2}) in Definition~\ref{def:immn}, we see that for all $x_1\in S_1$, if $x_1 \leq z_1$ then $\phi(x_1) \leq z_2$.  Therefore, 
\begin{equation}\label{eqn:rv1}
\underset{x_1< z_1}{\sum_{x_1\in S_1} }\Delta_1(x_1) = \underset{x_1< z_1}{\sum_{x_1\in S_1} }\Delta_2(\phi (x_1)) \leq \underset{x_2< z_2}{\sum_{x_2\in S_2} }\Delta_2(x_2),  
\end{equation}
where the last inequality is in fact an equality if $z_2\in S_2$. Similarly we see that for all $y_2\in Q_2$, $\phi^{-1}(y_2)\leq z_1$ if $y_2\leq z_2$. Hence we have 
\begin{equation}\label{eqn:rv2}
\underset{y_1< z_1}{\sum_{y_1\in Q_1} }\Delta_1(y_1) = \underset{y_1< z_1}{\sum_{y_1\in Q_1} }\Delta_2(\phi (x_1)) \leq \underset{y_2< z_2}{\sum_{y_2\in Q_2} }\Delta_2(y_2),
\end{equation}
where the last inequality is in fact an equality if $z_2\in Q_2$. Combining Equations (\ref{eqn:rv1}) and (\ref{eqn:rv2}), we see 
\begin{eqnarray*}
\tau_1(z_1) &=& \underset{x_1< z_1}{\sum_{x_1\in S_1} }\Delta_1(x_1)+\underset{y_1< z_1}{\sum_{y_1\in Q_1} }\Delta_1(y_1)  \\
&\leq & \underset{x_2< z_2}{\sum_{x_2\in S_2} }\Delta_2(x_2) +\underset{y_2< z_2}{\sum_{y_2\in Q_2} }\Delta_2(y_2) \\
&=& \tau_2(z_2).
\end{eqnarray*}
\end{proof}

\begin{definition}\label{def:immers}
A morphism $\phi :(X_1,\Delta _1) \to (X_2,\Delta _2)$ is called a \emph{semi-immersion} if for every $x\in S_1$ and $y\in Q_1$ with $x< y$, we have $\phi(x) < \phi (y)$.  A semi-immersion is called an \emph{immersion}, if  it satisfies $\displaystyle |\Delta_2 (z) |\geq  \sum _{x\in \phi ^{-1}(z)} |\Delta _1 (x)|$ for all $z\in \phi (X_1)$.   
\end{definition}

\begin{theorem}\label{theo:immerssub}
Suppose there exists an immersion from $(X_1,\Delta _1)$ to $(X_2,\Delta _2)$. Then the following exist.
\begin{itemize}
\item A refinement $(X'_1,\Delta'_1)$ of $(X_1,\Delta _1)$.
\item A refinement $(X'_2,\Delta'_2)$ of $(X_2,\Delta _2)$.
\item A delta subsequence  $(\widetilde{X}_2,\widetilde{\Delta} _2)$ of $(X'_2,\Delta'_2)$.
\item A right-veering morphism $\widetilde{\phi} :(X'_1,\Delta'_1) \to (\widetilde{X}_2,\widetilde{\Delta} _2)$.
\end{itemize}
\end{theorem}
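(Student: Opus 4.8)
My plan is to follow the architecture of the proof of Theorem~\ref{theo:embedd} as closely as possible, the essential new feature being that the defining inequality of a semi-immersion (Definition~\ref{def:immers}) is only the forward half of Property~(\ref{defn:emb1}) in the definition of an embedding. Since we no longer control the relative order of $S_1$ against $Q_1$ in \emph{both} directions, we cannot hope to land in an isomorphism onto a subsequence; the surviving forward inequality is precisely Property~(\ref{def:immn2}) in the definition of a right-veering morphism, so a right-veering morphism is exactly the right target.

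The first step is identical to the opening of Theorem~\ref{theo:embedd}: using the immersion inequality $|\Delta_2(z)| \ge \sum_{x\in\phi^{-1}(z)}|\Delta_1(x)|$, I refine $(X_2,\Delta_2)$ to make $\phi$ injective and then refine once more so that the resulting injective morphism $\phi'$ satisfies $\Delta'_2(\phi'(w)) = \Delta_1(w)$ for every $w\in X_1$. Set $(\widetilde{X}_2,\widetilde{\Delta}_2) := (\phi'(X_1),\, \Delta'_2|_{\phi'(X_1)})$. This is a genuine delta subsequence: the minimum $z_0$ of $X_1$ lies in $S_1$, and the semi-immersion inequality forces $\phi'(z_0) < \phi'(y)$ for every $y\in Q_1$, so every negative element of $\widetilde{X}_2$ lies above $\phi'(z_0)$ and the minimum of $\widetilde{X}_2$ is positive. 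By construction $\phi':X_1\to\widetilde{X}_2$ is a bijection preserving $\Delta$-values and the semi-immersion inequality.

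At this point the argument must diverge from Theorem~\ref{theo:embedd}. There, preservation of $S$-$Q$ order in both directions guaranteed that the only failures of order-preservation occurred among \emph{consecutive} elements of a single sign, and these were corrected by merge-and-refine transpositions. Under a semi-immersion this fails: an element of $Q_1$ sitting between two elements of $S_1$ may be sent to the far right, so $\phi'$ can reverse the order of two positive (or two negative) elements separated by an element of the opposite sign, and such an inversion cannot be undone by transpositions inside a single block. To sidestep this, I refine every element of $X_1$ and of $\widetilde{X}_2$ into $\pm 1$ steps; since $\phi'$ matched $\Delta$-values, the two refined sequences carry the same number of positive steps and the same number of negative steps. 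I then \emph{define} $\widetilde{\phi}$ from scratch, ignoring $\phi'$, by sending the $i$-th positive step of $X'_1$ to the $i$-th positive step of $\widetilde{X}_2$ and the $j$-th negative step to the $j$-th negative step, both in increasing order. This is a $\Delta$-preserving bijection that is order-preserving on positives and on negatives by fiat, so Properties~(\ref{def:immn1}) and (\ref{def:immn3}) of Definition~\ref{def:immn} are automatic, and the passage to $\pm 1$ refinements is harmless by Proposition~\ref{prop:refinementsmerges}. Note that $X'_1$ is here an honest refinement of $X_1$, with no merges required.

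The remaining and main point is Property~(\ref{def:immn2}): whenever a positive step precedes a negative step in $X'_1$, the same must hold after applying $\widetilde{\phi}$. Writing $P_Z(q)$ for the number of positive steps lying to the left of a negative element $q$ in a sequence $Z$, this reduces to showing that the $j$-th negative step of $\widetilde{X}_2$ has at least as many positive steps before it as the $j$-th negative step of $X'_1$. The semi-immersion inequality supplies this comparison \emph{elementwise}: since $s<q$ implies $\phi'(s)<\phi'(q)$, every positive step left of $q$ maps to a positive step left of $\phi'(q)$, whence $P_{X_1}(q) \le P_{\widetilde{X}_2}(\phi'(q))$ for each $q$. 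Because $\widetilde{\phi}$ matches negative steps by rank rather than through $\phi'$, I then upgrade this elementwise domination to the required rank-wise inequality via the elementary fact that an order-increasing bijection between two finite multisets stays term-by-term increasing once both are sorted. I expect this final passage — converting the elementwise domination handed over by the semi-immersion into the rank-wise statement that verifies Property~(\ref{def:immn2}) — to be the only genuinely delicate step, the rest being a transcription of the embedding argument plus bookkeeping about $\pm 1$ refinements.
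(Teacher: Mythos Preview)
Your proof is correct and is essentially what the paper intends: the paper's entire proof is the one-line ``Similar to the proof of Theorem~\ref{theo:embedd}'', and you have carried out exactly that program, correctly identifying the one place where the transcription cannot be literal and supplying a clean fix. Your observation that in the immersion setting two elements of $S_1$ whose images are out of order need not be adjacent in $X_1$ (so the merge-and-refine transposition of Theorem~\ref{theo:embedd} is unavailable) is the key point, and your remedy---refining everything to $\pm 1$ steps and redefining $\widetilde{\phi}$ as the rank-matching bijection on each sign class---is valid. The final step, upgrading the elementwise inequality $P_{X'_1}(q)\le P_{\widetilde{X}_2}(\phi''(q))$ to the rank-wise inequality $P_{X'_1}(q_j)\le P_{\widetilde{X}_2}(q'_j)$, is indeed the only delicate point; your appeal to the sorting lemma (if $a_j\le b_{\sigma(j)}$ for some bijection $\sigma$ and both sequences are nondecreasing, then $a_j\le b_j$) is correct and is easily verified by a pigeonhole argument.
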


\begin{proof}
Similar to the proof of Theorem \ref{theo:embedd}.
\end{proof}

\begin{corollary}\label{cor:rankimmers}
If $(X_1,\Delta_1)$ can be immersed into $(X_2,\Delta_2)$ then 
$$\mathrm{rank} (\mathbb{H}_{\mathrm{red}}(\Gamma _1))\leq \mathrm{rank} (\mathbb{H}_{\mathrm{red}}(\Gamma _2)).$$
\end{corollary}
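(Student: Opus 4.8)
The plan is to deduce Corollary~\ref{cor:rankimmers} directly from the structural decomposition provided by Theorem~\ref{theo:immerssub}, combined with the rank-preservation and rank-monotonicity results already established for refinements, subsequences, and right-veering morphisms. The entire strategy is to reduce the statement about immersions to the statement about right-veering morphisms (Proposition~\ref{prop:rightveer}) by passing through refinements and a delta subsequence.

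First I would invoke Theorem~\ref{theo:immerssub} on the given immersion $\phi:(X_1,\Delta_1)\to(X_2,\Delta_2)$. This yields a refinement $(X'_1,\Delta'_1)$ of $(X_1,\Delta_1)$, a refinement $(X'_2,\Delta'_2)$ of $(X_2,\Delta_2)$, a delta subsequence $(\widetilde{X}_2,\widetilde{\Delta}_2)$ of $(X'_2,\Delta'_2)$, and a right-veering morphism $\widetilde{\phi}:(X'_1,\Delta'_1)\to(\widetilde{X}_2,\widetilde{\Delta}_2)$. Next I would apply Proposition~\ref{prop:rightveer} to $\widetilde{\phi}$ to obtain $\mathrm{rank}(\mathbb{H}_{\mathrm{red}}(\Gamma'_1))\leq \mathrm{rank}(\mathbb{H}_{\mathrm{red}}(\widetilde{\Gamma}_2))$, where $\Gamma'_1$ and $\widetilde{\Gamma}_2$ are the graded roots associated to $(X'_1,\Delta'_1)$ and $(\widetilde{X}_2,\widetilde{\Delta}_2)$.

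The remaining work is to relate these intermediate ranks back to $\mathrm{rank}(\mathbb{H}_{\mathrm{red}}(\Gamma_1))$ and $\mathrm{rank}(\mathbb{H}_{\mathrm{red}}(\Gamma_2))$. Since $(X'_1,\Delta'_1)$ is a refinement of $(X_1,\Delta_1)$, Proposition~\ref{prop:refinementsmerges} gives $\mathrm{rank}(\mathbb{H}_{\mathrm{red}}(\Gamma'_1))=\mathrm{rank}(\mathbb{H}_{\mathrm{red}}(\Gamma_1))$. On the target side, $(\widetilde{X}_2,\widetilde{\Delta}_2)$ is a delta subsequence of the refinement $(X'_2,\Delta'_2)$; applying Proposition~\ref{prop:compsub} (dropping the nonnegative $\mathbb{H}_{\mathrm{red}}$-rank of the complementary subsequence) gives $\mathrm{rank}(\mathbb{H}_{\mathrm{red}}(\widetilde{\Gamma}_2))\leq \mathrm{rank}(\mathbb{H}_{\mathrm{red}}(\Gamma'_2))$, and then Proposition~\ref{prop:refinementsmerges} again identifies $\mathrm{rank}(\mathbb{H}_{\mathrm{red}}(\Gamma'_2))=\mathrm{rank}(\mathbb{H}_{\mathrm{red}}(\Gamma_2))$. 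Chaining these relations yields
\[
\mathrm{rank}(\mathbb{H}_{\mathrm{red}}(\Gamma_1))=\mathrm{rank}(\mathbb{H}_{\mathrm{red}}(\Gamma'_1))\leq \mathrm{rank}(\mathbb{H}_{\mathrm{red}}(\widetilde{\Gamma}_2))\leq \mathrm{rank}(\mathbb{H}_{\mathrm{red}}(\Gamma'_2))=\mathrm{rank}(\mathbb{H}_{\mathrm{red}}(\Gamma_2)),
\]
which is exactly the desired inequality. This mirrors the proof of Corollary~\ref{cor:embed}, simply substituting the right-veering rank bound for the subsequence rank bound.

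Since Theorem~\ref{theo:immerssub} is quoted as already established, the corollary itself involves no genuine obstacle: it is a bookkeeping exercise in assembling known rank comparisons along the correct chain. The only point requiring a moment of care is confirming that the complementary-subsequence bound from Proposition~\ref{prop:compsub} applies to $(\widetilde{X}_2,\widetilde{\Delta}_2)$ inside $(X'_2,\Delta'_2)$, but since any delta subsequence is automatically handled by that proposition (its complementary subsequence has nonnegative $\mathbb{H}_{\mathrm{red}}$-rank), this step is immediate.
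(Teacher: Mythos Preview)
Your proof is correct and follows essentially the same approach as the paper, which simply cites Proposition~\ref{prop:compsub}, Proposition~\ref{prop:rightveer}, and Theorem~\ref{theo:immerssub}. You have spelled out the chain of inequalities in more detail, including the invocation of Proposition~\ref{prop:refinementsmerges} to handle the refinement steps, which the paper leaves implicit.
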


\begin{proof}
This follows from Proposition  \ref {prop:compsub}, Proposition \ref{prop:rightveer}, and Theorem \ref{theo:immerssub}.  
\end{proof}

\subsection{Well-behaved semi-immersions}\label{subsec:wel}
Let $\phi:(X_1,\Delta _1)\to (X_2,\Delta _2)$ be a one-to-one semi-immersion. For $z\in X_1$, the \emph{defect} of $z$ is the number
$$d_\phi(z):=|\Delta_1 (z)|-|\Delta_2(\phi(z))|.$$ 
An element $z\in X_1$ is called
\begin{itemize}
\item a \emph{bad point} if it has positive defect,
\item a \emph{good point} if it has negative defect,
\item a \emph{neutral point} if it has zero defect.
\end{itemize}

\noindent We shall call the images of bad (respectively good, neutral) points, \emph{bad} (respectively \emph{good, neutral}) \emph{values}.

\vspace{0.2cm}

Of course a one-to-one semi-immersion $\phi$ is an immersion if it has no bad points. We shall investigate how much we can relax this condition to still obtain rank inequalities for $\mathbb{H}_{\mathrm{red}}$ under one-to-one semi-immersions. Denote the set of good points and the set of bad points by $G(\phi)$ and $B(\phi)$  respectively.

\begin{definition}\label{d:control} A \emph{control function} is an injection $\theta:B(\phi)\to G(\phi)$ satisfying:
\begin{enumerate}
\item \label{def:cont1} $z\in S_1$ if and only if $\theta(z)\in S_1$,
\item \label{def:cont2} $\theta (z) <z $ if $z\in S_1 \cap B(\phi)$, and $\theta (z) >z $ if $z\in Q_1 \cap B(\phi)$,
\item \label{def:cont3} $|d_\phi(z)|\leq |d_\phi(\theta(z))|$ for all $z\in B(\phi)$. 
\end{enumerate}
\end{definition}

\noindent We will use control functions to ``fix'' defects of bad points with merges and refinements using corresponding good points. Call a one-to-one semi-immersion \emph{well-behaved} if it admits a control function. 

\begin{theorem}\label{theo:rankwelbeh}
Let $\phi : (X_1,\Delta_1) \to (X_2,\Delta_2)$ be delta sequences. Suppose there is a well-behaved one-to-one semi-immersion $\phi:(X_1,\Delta_1) \to (X_2,\Delta_2)$. Then there exist refinements $(X'_1,\Delta '_1)$ and $(X'_2,\Delta'_2)$ of $(X_1,\Delta_1)$ and $(X_2,\Delta_2)$ respectively, and a one-to-one immersion $\phi ' : (X'_1,\Delta '_1)\to (X'_2,\Delta'_2)$.
\end{theorem}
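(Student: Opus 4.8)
The plan is to observe that a one-to-one semi-immersion fails to be an immersion precisely at its bad points: for a one-to-one map the immersion inequality of Definition~\ref{def:immers} reduces to $|\Delta_2(\phi(z))| \geq |\Delta_1(z)|$ for every $z$, i.e. to the absence of positive defect. Thus the goal is to eliminate all bad points by performing local refinements on the domain and the target while keeping the map a one-to-one semi-immersion. I would use the control function $\theta$ to repair the bad points one at a time, transferring the excess $d_\phi(z)$ of each bad point $z$ into the slack available at its partner good point $\theta(z)$, the slack being exactly $|d_\phi(\theta(z))| = |\Delta_2(\phi(\theta(z)))| - |\Delta_1(\theta(z))|$.

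Concretely, fix a bad point $z$, say $z \in S_1$ (the case $z \in Q_1$ being symmetric), and write $w = \theta(z)$, which by Property~(\ref{def:cont1}) of Definition~\ref{d:control} also lies in $S_1$ and by Property~(\ref{def:cont2}) satisfies $w < z$. First I refine the domain at $z$ into two consecutive positive elements $z_1, z_2$ with $\Delta_1'(z_1) = \Delta_2(\phi(z))$ and $\Delta_1'(z_2) = d_\phi(z)$, so that $z_1 \mapsto \phi(z)$ becomes neutral. Next, using $|d_\phi(z)| \leq |d_\phi(w)|$ from Property~(\ref{def:cont3}), I refine the target at $\phi(w)$ into two consecutive positive elements of values $d_\phi(z)$ and $|\Delta_2(\phi(w))| - d_\phi(z) \geq |\Delta_1(w)| \geq 1$; the inequality guarantees that this is a legal refinement and that $w$ may be sent to the larger piece without becoming bad. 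Finally I redefine the map to send the excess element $z_2$ to the piece of value $d_\phi(z)$, so that $z_2$ becomes neutral as well. After carrying this out for every bad point, the resulting map $\phi'$ has no bad points, hence is a one-to-one immersion.

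Two points must then be verified. Injectivity of $\phi'$ and the absence of interference between the individual surgeries follow from the injectivity of $\theta$ together with the disjointness of good and bad points: since $\phi$ is one-to-one, $\phi(\theta(z)) = \phi(\theta(z'))$ would force $\theta(z) = \theta(z')$, so no target element is refined twice and no landing piece is reused, and the element $\phi(z)$ receiving $z_1$ is never one of the refined images $\phi(\theta(z'))$ because $z$ is bad while $\theta(z')$ is good. The essential point is that $\phi'$ remains a semi-immersion, i.e. that $\phi'(x) < \phi'(y)$ whenever $x \in S'$, $y \in Q'$ and $x < y$. The only altered mappings are those of the rerouted excess elements, and here the side condition $w < z$ is exactly what is needed: for any $y \in Q_1$ with $z_2 < y$ we have $w < z < y$ with $w \in S_1$, $y \in Q_1$, so the original semi-immersion gives $\phi(w) < \phi(y)$; since $\phi'(z_2)$ occupies the position of $\phi(w)$ in $X_2'$, this yields $\phi'(z_2) < \phi'(y)$. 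The symmetric condition $\theta(z) > z$ handles the case $z \in Q_1$.

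I expect the main obstacle to be the bookkeeping in this last verification: organizing the simultaneous local refinements so that the cross-order condition between $S'$ and $Q'$ is preserved globally, and confirming that the newly created neutral elements and the formerly good elements introduce no new bad points. This is precisely where all three defining properties of a control function and the injectivity of $\theta$ are used in an essential way. The construction is otherwise parallel to the proof of Theorem~\ref{theo:embedd}, where merges and refinements are used to repair a failure of the order-preserving property, so I would import that machinery rather than repeat it.
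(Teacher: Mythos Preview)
Your proposal is correct and follows essentially the same approach as the paper: refine the domain at each bad point into a neutral piece plus an excess piece, refine the target at the image of the paired good point, and reroute the excess there, using Properties~(\ref{def:cont1})--(\ref{def:cont3}) of the control function to guarantee respectively that the morphism, semi-immersion, and defect conditions survive. The only cosmetic difference is that the paper splits $\phi(\theta(b))$ as $\Delta_1(\theta(b)) + (\text{rest})$, making $\theta(b)$ neutral and $b_2$ non-bad, whereas you split it as $d_\phi(z) + (\text{rest})$, making $z_2$ neutral and $w$ non-bad; both work for the same reason, namely $|d_\phi(z)| \leq |d_\phi(\theta(z))|$.
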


\begin{proof}
Let $\theta:B(\phi)\to G(\phi)$ be a control function. We define a refinement $(X'_1,\Delta'_1)$ of $(X_1,\Delta_1)$ at each $b\in B(\phi)$ by replacing it with a consecutive pair $b_1$, $b_2$ with the additional requirement that $\Delta_1'(b_1)=\Delta_2(\phi(b))$. Note that this automatically ensures $|\Delta_1'(b_2)|=|d_\phi(b)|$.   Similarly define a refinement $(X'_2,\Delta'_2)$ of $(X_2,\Delta_2)$ at each $g=\phi(\theta(b))$ for $b\in B(\phi)$, by replacing $g$ with a consecutive pair $g_1$, $g_2$ with the requirement that $\Delta_2'(g_1)=\Delta_1(\theta(b))$. Again this implies $|\Delta_2'(g_2)|=|d_\phi(\theta(b))|$.

\vspace{0.2cm}

Now define a map $\phi':(X_1',\Delta_1')\to (X_2',\Delta_2')$ by 
\begin{equation}\label{e:refine}
\phi'(z) =
\left\{
\begin{array} {rl}
\phi(z) &\text{ if }\; z \in X_1 \setminus (B(\phi)\cup \theta (B(\phi))), \\[2pt]
\phi(b) & \text{ if }\; z = b_1 \; \text{for} \; \text{some} \; b\in B(\phi), \\[2pt]
g_1 & \text{ if }\; z = \theta(b)  \; \text{for} \; \text{some} \; b\in B(\phi) \; \text{and} \; g=\phi(\theta(b)), \\[2pt]
g_2 & \text{ if }\;  z = b_2  \; \text{for} \; \text{some} \; b\in B(\phi) \; \text{and} \; g=\phi(\theta(b)).
\end{array} \right.
\end{equation}
We check that $\phi'$ satisfies all the claimed properties. Clearly $\phi'$ is injective if $\phi$ is injective. Property (\ref{def:cont1}) of control functions ensures that $\phi'$ is a morphism. Property (\ref{def:cont2}) of being a control function implies $\phi'$ is a semi-immersion.  It is left to show that $\phi '$ has no bad points. Clearly no element of $X_1\setminus (B(\phi)\cup \theta (B(\phi)))$ is bad. Suppose $b\in B(\phi)$, we will show none of $b_1,b_2$ and $\theta (b)$ are bad. Equation (\ref{e:refine}) shows that under $\phi'$, $b_1$ and $\theta(b)$ are neutral.  Finally by Property (\ref{def:cont3}) of control functions, we have
$$|\Delta_1'(b_2)|=|d_\phi(b)|\leq |d_\phi(\theta(b))| =|\Delta_2 '(g_2)|.$$
\noindent Hence $b_2$ is not a bad point.
\end{proof}

\begin{corollary}\label{cor:ranksemi}
If there is a well-behaved one-to-one semi immersion $\phi:(X_1,\Delta_1)\to (X_2,\Delta_2)$ then
$$\mathrm{rank} (\mathbb{H}_{\mathrm{red}}(\Gamma _1))\leq \mathrm{rank} (\mathbb{H}_{\mathrm{red}}(\Gamma _2)).$$
\end{corollary}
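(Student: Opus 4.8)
The plan is to reduce the statement to the immersion case, which is already settled by Corollary~\ref{cor:rankimmers}. The key observation is that Theorem~\ref{theo:rankwelbeh} has done essentially all of the combinatorial work: it converts a well-behaved one-to-one semi-immersion into an honest one-to-one immersion at the cost of passing to refinements of the source and target. Since $\mathbb{H}_{\mathrm{red}}$ is insensitive to refinements by Proposition~\ref{prop:refinementsmerges}, these refinements are harmless for the rank comparison, and the three results should chain together directly.

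Concretely, I would proceed as follows. First, apply Theorem~\ref{theo:rankwelbeh} to the given well-behaved one-to-one semi-immersion $\phi$ to obtain refinements $(X'_1,\Delta'_1)$ and $(X'_2,\Delta'_2)$ of $(X_1,\Delta_1)$ and $(X_2,\Delta_2)$, together with a one-to-one immersion $\phi':(X'_1,\Delta'_1)\to(X'_2,\Delta'_2)$. Second, observe that a one-to-one immersion is in particular an immersion, so Corollary~\ref{cor:rankimmers} applies to $\phi'$ and yields
$$\mathrm{rank}(\mathbb{H}_{\mathrm{red}}(\Gamma'_1))\leq\mathrm{rank}(\mathbb{H}_{\mathrm{red}}(\Gamma'_2)),$$
where $\Gamma'_1$ and $\Gamma'_2$ are the graded roots of the refined sequences. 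Third, invoke Proposition~\ref{prop:refinementsmerges} to identify these ranks with the original ones, $\mathrm{rank}(\mathbb{H}_{\mathrm{red}}(\Gamma'_i))=\mathrm{rank}(\mathbb{H}_{\mathrm{red}}(\Gamma_i))$ for $i=1,2$, and conclude.

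I do not expect any genuine obstacle here, since the difficulty has been front-loaded into Theorem~\ref{theo:rankwelbeh}. The only points that require a moment of care are bookkeeping: checking that the map $\phi'$ produced by that theorem really does meet the hypotheses of Corollary~\ref{cor:rankimmers} (namely that one-to-oneness is not lost and that $\phi'$ is a semi-immersion satisfying the $\Delta$-value inequality defining an immersion), and confirming that Proposition~\ref{prop:refinementsmerges} applies to the source and target refinements simultaneously. If one wanted a self-contained argument avoiding the reduction, one could instead mimic the proof of Corollary~\ref{cor:rankimmers} directly, using the control function to absorb each bad point against its assigned good point via a matched refinement--merge pair; but the reduction through Theorem~\ref{theo:rankwelbeh} is cleaner and reuses the existing machinery, so that is the route I would take.
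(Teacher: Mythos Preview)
Your proposal is correct and matches the paper's approach: the paper's one-line proof cites Proposition~\ref{prop:iso}, Corollary~\ref{cor:rankimmers}, and Theorem~\ref{theo:rankwelbeh}, which is exactly the reduction you describe. The only cosmetic difference is that you invoke Proposition~\ref{prop:refinementsmerges} rather than Proposition~\ref{prop:iso} for the refinement-invariance step, which is arguably the more direct citation.
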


\begin{proof}
This follows from  Proposition~\ref{prop:iso}, Corollary~\ref{cor:rankimmers},  and Theorem~\ref{theo:rankwelbeh}.
\end{proof}

\subsection{Delta sequences of Seifert homology spheres}\label{ss:deltaforseif}

It is our goal to reduce the comparison of the Heegaard Floer homology of Seifert homology  spheres to the comparison of the associated  delta sequences. In order to use the techniques developed in this section we need to see that the graded root of a Seifert homology sphere is naturally induced by an abstract delta sequence. 

\vspace{0.2cm}

Let $Y=\Sigma (p_1,\dots,p_l)$ be a Seifert homology sphere, and $\Delta _Y$  its delta function. We shall use Theorem \ref{theo:mainmore} and  the notation introduced there. In particular $G_Y$ denotes the numerical semigroup generated by 
$$\frac{p_1\cdots p_l}{p_i}\quad \text{for $i=1,\dots,l$}.$$
The graded root and hence the Heegaard Floer homology of $Y$ is completely determined by the values of $\Delta _Y$ on the interval $[0,N_Y]$ because it becomes non-negative afterward. On the  interval $[0,N_Y]$,  $\Delta_Y$ takes all its positive values on the set  $S_Y:=G_Y\cap [0,N_Y]$, and it takes all its negative values on the set $Q_Y:=\{N_Y-x: x \in S_Y \} $. Note that $S_Y\cap Q_Y = \emptyset$. Let $X_Y:=S_Y\cup Q_Y$.

\begin{proposition}
For any Seifert homology sphere $Y$ which is not $S^3$ or $\Sigma(2,3,5)$, the pair $(X_Y,\Delta_Y|_{X_Y})$ is an abstract delta sequence in the sense of Definition \ref{def:deltaseq}. The graded root induced by this abstract delta sequence is the same as the graded root of $Y$. 
\end{proposition}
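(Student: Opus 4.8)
The plan is to verify the two clauses of Definition~\ref{def:deltaseq} and then to identify the two graded roots by comparing their underlying $\tau$-sequences. Everything rests on the properties of $\Delta_Y$ collected in Theorem~\ref{theo:mainmore}, which are available precisely because the hypotheses exclude the two cases---$l\leq 2$ and $l=3$ with $\{p_1,p_2,p_3\}=\{2,3,5\}$---in which $N_Y$ fails to be a positive integer.

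\emph{That $(X_Y,\Delta_Y|_{X_Y})$ is a delta sequence.} The sets $S_Y=G_Y\cap[0,N_Y]$ and $Q_Y=\{N_Y-x:x\in S_Y\}$ are finite subsets of $\mathbb{Z}$, so $X_Y=S_Y\cup Q_Y$ is a finite, hence well-ordered, subset of $\mathbb{Z}$; this gives clause~(\ref{deltaseqit1}). For clause~(\ref{deltaseqit2}), Theorem~\ref{theo:mainmore}(\ref{i:theo5}) shows $\Delta_Y>0$ on $S_Y$, while the symmetry $\Delta_Y(n)=-\Delta_Y(N_Y-n)$ of part~(\ref{i:theo3}) shows $\Delta_Y<0$ on $Q_Y$; in particular $\Delta_Y|_{X_Y}$ is nowhere zero. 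Finally, $0$ is the empty combination of the generators of $G_Y$, so $0\in S_Y$ is the minimum of $X_Y$, and evaluating~(\ref{e:delta}) gives $\Delta_Y(0)=1>0$, as required.

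\emph{That the induced graded root is that of $Y$.} The crucial point is that on $[0,N_Y]$ the function $\Delta_Y$ vanishes exactly off $X_Y$: by part~(\ref{i:theo5}) it is positive precisely on $S_Y$, and by the symmetry~(\ref{i:theo3}) it is negative precisely on $Q_Y$, hence zero on $[0,N_Y]\setminus X_Y$. Since every $z\in X_Y$ satisfies $z\leq N_Y$, the indices omitted in the first sum below all lie in $[0,N_Y]$ and carry $\Delta_Y=0$, so
\[ \tau_{\Delta_Y|_{X_Y}}(z)=\underset{w<z}{\sum_{w\in X_Y}}\Delta_Y(w)=\sum_{n=0}^{z-1}\Delta_Y(n)=\tau_Y(z); \]
thus the two $\tau$-functions agree at each point of $X_Y$, both emanating from the common value $0$. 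I would then invoke the fact recalled in Section~\ref{s:Calc} that a graded root is completely determined by the subsequence of local maxima and minima of its defining sequence. Passing from $\tau_Y$ to $\tau_{\Delta_Y|_{X_Y}}$ simply deletes the indices on which $\Delta_Y=0$, i.e. collapses the constant stretches of $\tau_Y$; together with the fact that $\tau_Y$ is non-decreasing beyond $N_Y$ by part~(\ref{i:theo2}), this creates and destroys no extremum, and the surviving extrema occur at identical values in both sequences. Hence the two sequences define the same graded root, which by Theorem~\ref{theo:Nemethi} is the graded root of $Y$.

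The step I expect to demand the most care is this last one: making rigorous that collapsing the flat stretches preserves the ordered sequence of local extrema---in particular handling plateaus of $\tau_Y$ that straddle a turning point---and checking that the root built from the finite sequence $\tau_{\Delta_Y|_{X_Y}}$ genuinely matches the root of the eventually-non-decreasing infinite sequence $\tau_Y$ at and beyond the index $N_Y$.
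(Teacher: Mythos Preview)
Your proof is correct and follows essentially the same approach as the paper's: verify the two clauses of Definition~\ref{def:deltaseq} (the paper checks $\Delta_Y(0)=1$ as you do), and for the graded root identification observe that removing the zeros of $\Delta_Y$ and the tail beyond $N_Y$ does not change the root. The paper states this last point in a single sentence, whereas you spell out why---namely that $\tau_Y$ and $\tau_{\Delta_Y|_{X_Y}}$ agree on $X_Y$ and that collapsing flat stretches preserves the sequence of local extrema---which is a welcome elaboration rather than a different argument.
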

\begin{proof}
It is clear that Property (\ref{deltaseqit1}) of Definition \ref{def:deltaseq} is satisfied.  Property (\ref{deltaseqit2}) holds, since $\Delta_Y(0) = 1$.  Therefore, $(X_Y,\Delta_Y|_{X_Y})$ is an abstract delta sequence.  For the second claim, we simply observe that the zeros of $\Delta_Y$ and the values of $\Delta_Y$ after $N_Y$ do not affect the graded root.  
\end{proof}

In particular, we have that $\mathbb{H}_{\mathrm{red}}(\Gamma_{\Delta_Y})$ (respectively $\widehat{\mathbb{H}}(\Gamma_{\Delta_Y})$) is isomorphic to $\HFred (-Y)$ (respectively $\widehat{HF}(-Y)$) by Theorem \ref{theo:Nemethi}. The duality of Heegaard Floer homology under orientation reversal \cite[Proposition 2.5]{OS5} implies   $\rank \mathbb{H}_{\mathrm{red}}(\Gamma_{\Delta_Y})= \rank \HFred (Y)$ and $\rank \widehat{\mathbb{H}}(\Gamma_{\Delta_Y}) =\rank \widehat{HF}(Y)$.

\vspace{0.2cm}

Next we discuss a practical method to generate semi-immersions between delta sequences of Seifert homology spheres from maps between their semigroups.

\begin{definition}\label{def:tight}
Suppose $Y$ and $Y'$ are Seifert homology spheres.  If $\phi : G_Y \to G_{Y'}$ is a one-to-one function such that 

\begin{enumerate}
\item \label{d:tight1} $\phi(x) \geq x$,  
\item \label{d:tight2} $\phi(x) - x \leq \frac{N_{Y'} - N_{Y}}{2}$,  
\end{enumerate}
for every $x\in G_Y$, then $\phi$ is called {\em rigid}.  
\end{definition}

\begin{lemma}\label{lem:tightimpliesrightveering}
Let $\phi:G_Y \to G_{Y'}$ be rigid.  Then, $\phi$ naturally determines a one-to-one semi-immersion on the associated abstract delta sequences by restricting $\phi$ to $S_Y$ and by extending $\phi$ to $Q_Y$ by $\phi(N_Y - x) = N_{Y'} - \phi(x)$.
\end{lemma}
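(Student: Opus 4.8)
The plan is to verify directly that the extended map, which I will denote $\widehat\phi:X_Y\to X_{Y'}$, satisfies the three requirements bundled into the definition of a one-to-one semi-immersion: that it is a morphism (i.e.\ $\widehat\phi(S_Y)\subseteq S_{Y'}$ and $\widehat\phi(Q_Y)\subseteq Q_{Y'}$), that it is injective, and that it satisfies the order condition $\widehat\phi(x)<\widehat\phi(y)$ whenever $x\in S_Y$ and $y\in Q_Y$ with $x<y$. Recall that $S_Y=G_Y\cap[0,N_Y]$, that $Q_Y=\{N_Y-x:x\in S_Y\}$, with the analogous descriptions for $Y'$, and that $\widehat\phi$ is defined to agree with $\phi$ on $S_Y$ and to send $N_Y-x\mapsto N_{Y'}-\phi(x)$.

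First I would record the basic consequence of rigidity that $N_{Y'}\geq N_Y$: combining $\phi(x)\geq x$ (Property~(\ref{d:tight1})) with $\phi(x)-x\leq \tfrac{N_{Y'}-N_Y}{2}$ (Property~(\ref{d:tight2})) forces $\tfrac{N_{Y'}-N_Y}{2}\geq 0$. To see that $\widehat\phi(S_Y)\subseteq S_{Y'}$, take $x\in S_Y$, so $x\in G_Y$ and $x\leq N_Y$; then $\phi(x)\in G_{Y'}$ by hypothesis, and Property~(\ref{d:tight2}) gives
$$\phi(x)\leq x+\frac{N_{Y'}-N_Y}{2}\leq N_Y+\frac{N_{Y'}-N_Y}{2}=\frac{N_Y+N_{Y'}}{2}\leq N_{Y'},$$
so that $\phi(x)\in G_{Y'}\cap[0,N_{Y'}]=S_{Y'}$. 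Consequently, for $N_Y-x\in Q_Y$ the value $\widehat\phi(N_Y-x)=N_{Y'}-\phi(x)$ lies in $\{N_{Y'}-z:z\in S_{Y'}\}=Q_{Y'}$, so $\widehat\phi$ is a morphism. Injectivity is then immediate: $\phi$ is one-to-one on $S_Y$, the map $x\mapsto N_{Y'}-\phi(x)$ is one-to-one on $Q_Y$, and the two images lie in the disjoint sets $S_{Y'}$ and $Q_{Y'}$.

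The crux is the order condition, and this is where both defining inequalities of rigidity are used together. Given $x\in S_Y$ and $y\in Q_Y$ with $x<y$, write $y=N_Y-x'$ with $x'\in S_Y$; then $x<N_Y-x'$ says $x+x'<N_Y$. Applying Property~(\ref{d:tight2}) to both $x$ and $x'$ and adding,
$$\widehat\phi(x)+\phi(x')=\phi(x)+\phi(x')\leq (x+x')+(N_{Y'}-N_Y)<N_Y+(N_{Y'}-N_Y)=N_{Y'},$$
whence $\widehat\phi(x)<N_{Y'}-\phi(x')=\widehat\phi(y)$, as required. I expect this additivity step to be the only subtle point: the bound $\tfrac{N_{Y'}-N_Y}{2}$ in Property~(\ref{d:tight2}) is calibrated precisely so that the two displacements sum to at most $N_{Y'}-N_Y$, which is exactly what is needed to push each element of $S_Y$ past its paired element of $Q_Y$. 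Everything else is a direct unwinding of the definitions.
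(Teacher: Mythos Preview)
Your proof is correct and follows essentially the same route as the paper's: you establish $N_{Y'}\geq N_Y$ from rigidity, bound $\phi(x)\leq \tfrac{N_Y+N_{Y'}}{2}\leq N_{Y'}$ to get $\widehat\phi(S_Y)\subseteq S_{Y'}$, and for the semi-immersion condition you add the two instances of Property~(\ref{d:tight2}) to obtain $\phi(x)+\phi(x')<N_{Y'}$ exactly as the paper does. Your version is in fact slightly more explicit about injectivity (which the paper leaves implicit), but otherwise the arguments are the same.
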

\begin{proof}
Let us first verify that $\phi$ defines a morphism from $\Delta_Y$ to $\Delta_{Y'}$. Observe that Properties (\ref{d:tight1}) and (\ref{d:tight2}) in Definition \ref{def:tight} force that $N_Y \leq N_{Y'}$. If $x\in S_Y$, then $x< N_Y$. Use Property (\ref{d:tight2}) in Definition \ref{def:tight} to see that
$$\phi(x) \leq \frac{N_{Y'}}{2} + x - \frac{N_Y}{2} < \frac{N_{Y'}+N_Y}{2} \leq N_{Y'}.$$
\noindent Hence $\phi(S_Y)\subseteq S_{Y'}$. Furthermore,  $\phi(N_Y-x)=N_{Y'}-\phi(x)$ by definition. Since  $\phi(x)\geq 0$, we have $\phi(N_Y-x) >0$. Therefore $\phi(Q_Y)\subseteq Q_{Y'}$.  

\vspace{0.2cm}

The last thing to check is that if $x_1 < N_Y-x_2$ for $x_1,x_2\in S_{Y}$ then $\phi (x_1) < N_{Y'}-\phi(x_2)$. Again it follows from Property (\ref{d:tight2}) in Definition~\ref{def:tight} that 
$$\phi(x_1) -x_1 +\phi(x_2) -x_2 \leq N_{Y'}-N_Y.$$
\noindent Hence
$$ \phi (x_1) + \phi(x_2) \leq N_{Y'}- (N_{Y}-x_1-x_2) <N_{Y'},$$
\noindent where the last inequality follows from $x_1<N_Y-x_2$.
\end{proof}

\subsection{Guide} The following diagram shows where various types of morphisms are defined and where they are going to be used in our argument. 
\noindent

\begin{equation*}
\begindc{\commdiag}[10]
\obj(20,60)[Iso]{
\parbox[b][3em][c]{0.4\textwidth}{Isomorphisms: Definition \ref{def:morph}\\ Rank equality by Proposition \ref{prop:iso}} \vspace{0.3cm}}
\obj(35,50)[Rig]{
\parbox[b][3em][c]{0.4\textwidth}{\vspace{0.2cm} Right-veerings: Definition \ref{def:immn}\\ Rank inequality by Proposition \ref{prop:rightveer}} }
\obj(12,50)[Emb]{
\parbox[b][3em][c]{0.4\textwidth}{\vspace{0.2cm} Embeddings: Definition \ref{def:embed}\\ Rank inequality by Corollary \ref{cor:embed} } }
\obj(9,44)[Dis]{
\parbox[b][3em][c]{0.4\textwidth}{\vspace{0.5cm} Disjoint Embeddings \\ Rank inequality by Corollary \ref{cor:disjointemb}} }
\obj(24, 40)[Imm]{
\parbox[b][3em][c]{0.5\textwidth}{\vspace{0.5cm} Immersions: Definition \ref{def:immers}\\ Rank inequality by Corollary \ref{cor:rankimmers}} }
\obj(35, 35)[Wel]{
\parbox[b][3em][c]{0.5\textwidth}{\vspace{0.5cm} Well-behaved semi-immersions: Section \ref{subsec:wel} \\ Rank inequality by Corollary \ref{cor:ranksemi}} }
\obj(5, 34)[Bra]{
\parbox[b][3em][c]{0.3\textwidth}{\vspace{0.5cm} Rank inequality for \\branched covers: \\ Theorem~\ref{thm:fiberbranched}, Section \ref{sec:bra}} }
\obj(15, 29)[Par]{
\parbox[b][3em][c]{0.3\textwidth}{\vspace{0.5cm} Partial order \\ rank inequality: \\ Theorem~\ref{thm:monotonicity}, Section \ref{sec:par}} }
\obj(26, 22)[Sem]{
\parbox[b][3em][c]{0.4\textwidth}{\vspace{0.5cm} Semi-immersions:\\ Definition \ref{def:immers} \\ No rank inequality} }
\obj(42, 24)[Pin]{
\parbox[b][3em][c]{0.4\textwidth}{\vspace{0.5cm} Rank inequality for\\ vertical pinches:\\ Theorem~\ref{thm:pinch}, Section \ref{sec:pin}} }

\mor(23,59)(33,51){}[1,0]
\mor(17,59)(12,51){}[1,0]
\mor(8,49)(6,45){}[1,0]
\mor(15,49)(18,40){}[1,0]
\mor(32,49)(22,40){}[1,0]
\mor(33,49)(36,35){}[1,0]
\mor(7,43)(3,35){}[1,0]
\mor(17,39)(12,30){}[1,0]
\mor(21,39)(21,23){}[1,0]
\mor(30,34)(21,23){}[1,0]
\mor(33,34)(37,25){}[1,0]
\enddc
\end{equation*}

\section{Proof of the Rank inequality for Branched Covers}\label{sec:bra}
We start with a remark about the content of the proof.
\begin{remark}
For $S^3$ or $\Sigma(2,3,5)$, we have $\HFred=0$. Hence Theorems \ref{thm:fiberbranched} and \ref{thm:monotonicity} trivially hold if one of the Seifert homology spheres being considered is either $S^3$ or $\Sigma(2,3,5)$. For this reason, we will not consider these manifolds in the proofs of Theorems \ref{thm:fiberbranched} and \ref{thm:monotonicity}. Also note that  $S^3$ or $\Sigma(2,3,5)$ cannot arise in the statement of Theorem \ref{thm:pinch}.
\end{remark}

\begin{proof}[Proof of Theorem \ref{thm:fiberbranched}]
  
Let $Y = \Sigma(p_1,\ldots,p_l)$ and $Y' = \Sigma(p_1,\ldots,p_{l-1},n p_l)$. Consider the associated abstract delta sequences $(X_Y,\Delta_Y)$ and $(X_{Y'},\Delta_{Y'})$ as described in Section \ref{ss:deltaforseif}. Our aim is to show that $n \rank \mathbb{H}_{\mathrm{red}}(\Gamma_{\Delta_Y})  \leq \rank \mathbb{H}_{\mathrm{red}}(\Gamma_{\Delta_{Y'}})$, and appeal to Theorem~\ref{theo:Nemethi}. By Corollary~\ref{cor:disjointemb}, it suffices to find $n$ disjoint embeddings of $\Delta_Y$ into $\Delta_{Y'}$.  
For $0 \leq k \leq n - 1$, define 
\[
\phi_k : X_Y \to X_{Y'}, z \mapsto n z + k p_1 \cdots p_{l-1}.   
\] 
We first need to see that each $\phi_k$ is an embedding. In particular we must show that each $\phi_k$ is a morphism. It is clear that $\phi_k$ takes elements of $G_Y$ to $G_{Y'}$.  Let $z \in X_Y$.  We observe that 
\begin{align*}
\phi_k(z) \leq \phi_{n-1}(z) = n z + (n - 1)p_1 \cdots p_{l-1} \leq n N_Y + (n - 1) p_1 \cdots p_{l-1}  = N_{Y'}
\end{align*}
This implies that $\phi_k(S_Y) \subset S_{Y'}$.  
Now, if $y = N_{Y} - x \in Q_{Y}$, where $x \in S_Y$, we have 
\begin{align}\label{eq:phik}
\phi_k(y) = n y + k p_1 \cdots p_{l-1} & = n N_Y + k p_1 \cdots p_{l-1} - nx \\
& = N_{Y'} - (n - 1)p_1 \cdots p_{l-1} + k p_1 \cdots p_{l-1} - n x \\
\label{eq:phik2}& = N_{Y'} - (n - 1 -k)p_1 \cdots p_{l-1} - n x.     
\end{align}
Since $0 \leq k \leq n - 1$, we have that $(n - 1 - k) p_1 \cdots p_{l-1} + n x=\phi_{n-1-k}(x) \in S_{Y'}$.  Therefore, $\phi_k(y) \in Q_{Y'}$.  Thus,  each $\phi_k$ is a morphism between the delta sequences $\Delta_Y$  and $\Delta_{Y'}$.  Furthermore, it is clear that each $\phi_k$ is order preserving (it is of the form $ax + b$ with $a>0$).  Therefore, Property (\ref{defn:emb1}) in Definition \ref{def:embed} is also satisfied.  

\vspace{0.2cm}

To complete showing that the $\phi_k$ are embeddings, it remains to check that $|\Delta_{Y'}(\phi_k(z))| \geq |\Delta_Y(z)|$ for all $z \in X_Y$, since the $\phi_k$ are injective.  The following claim establishes this.

\begin{claim}\label{claim:delta}
For each $z \in X_Y$, $\Delta_{Y'}(\phi_k(z)) = \Delta_Y(z)$.  
\end{claim}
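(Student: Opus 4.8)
The plan is to verify the identity separately on the positive part $S_Y$ and the negative part $Q_Y$ of $X_Y$, reducing each case to an explicit computation with the formulas recorded in Theorem~\ref{theo:mainmore}. Throughout I write $P = p_1\cdots p_l$, so that $G_Y$ is generated by the elements $P/p_i$, while $G_{Y'}$ is generated by $nP/p_i$ for $i=1,\dots,l-1$ together with $P/p_l$ (this last generator being $P'/(np_l)$, where $P' = nP$ is the product of the multiplicities of $Y'$). Since both $\Delta_Y$ and $\Delta_{Y'}$ are controlled by items~(\ref{i:theo3}) and~(\ref{i:theo6}) of Theorem~\ref{theo:mainmore}, the content of the claim is purely arithmetic.

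First I would treat $z\in S_Y$. Writing $z = P\sum_{i=1}^l x_i/p_i$ with $x_i\ge 0$, item~(\ref{i:theo6}) gives $\Delta_Y(z) = 1 + \sum_{i=1}^l \lfloor x_i/p_i\rfloor$. Substituting into the definition of $\phi_k$ and collecting terms against the generators of $G_{Y'}$ yields
\[
\phi_k(z) = \sum_{i=1}^{l-1} x_i\,\frac{P'}{p_i} + (n x_l + k)\,\frac{P'}{np_l},
\]
so that item~(\ref{i:theo6}) applied to $Y'$ gives $\Delta_{Y'}(\phi_k(z)) = 1 + \sum_{i=1}^{l-1}\lfloor x_i/p_i\rfloor + \lfloor (nx_l+k)/(np_l)\rfloor$. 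The claim on $S_Y$ thus reduces to the single floor identity $\lfloor (nx_l+k)/(np_l)\rfloor = \lfloor x_l/p_l\rfloor$. Writing $x_l = p_l q + r$ with $0\le r \le p_l-1$, this follows from the estimate $0 \le nr + k \le n(p_l-1) + (n-1) = np_l - 1 < np_l$, which is valid precisely because $0\le k \le n-1$.

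It remains to handle $z\in Q_Y$, and here I would bootstrap from the previous case using the symmetry of item~(\ref{i:theo3}). If $z = N_Y - x$ with $x\in S_Y$, then the computation already recorded in~(\ref{eq:phik2}) shows $\phi_k(N_Y - x) = N_{Y'} - \phi_{n-1-k}(x)$. Since $0 \le n-1-k \le n-1$, the point $\phi_{n-1-k}(x)$ lies in $S_{Y'}$, so the $S_Y$ case gives $\Delta_{Y'}(\phi_{n-1-k}(x)) = \Delta_Y(x)$; applying item~(\ref{i:theo3}) to both $Y$ and $Y'$ then yields $\Delta_{Y'}(\phi_k(z)) = -\Delta_{Y'}(\phi_{n-1-k}(x)) = -\Delta_Y(x) = \Delta_Y(z)$. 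The main point requiring care is that the formula in item~(\ref{i:theo6}) must be independent of the chosen representation $z = P\sum x_i/p_i$: the relations $p_i(P/p_i) = P$ allow one to transfer $p_i$ units from one coordinate to another without changing $z$, and such a move alters the floor contributions by $-1$ and $+1$, leaving $\sum_i\lfloor x_i/p_i\rfloor$ invariant. Granting this consistency, so that the applications of the formula for $Y$ and for $Y'$ genuinely compare the same quantity, the remainder is the direct calculation sketched above.
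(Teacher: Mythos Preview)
Your proof is correct and follows essentially the same route as the paper: split into $S_Y$ and $Q_Y$, use item~(\ref{i:theo6}) of Theorem~\ref{theo:mainmore} and the floor identity $\lfloor (nx_l+k)/(np_l)\rfloor = \lfloor x_l/p_l\rfloor$ on $S_Y$, then invoke the symmetry (item~(\ref{i:theo3})) together with the relation $\phi_k(N_Y-x)=N_{Y'}-\phi_{n-1-k}(x)$ on $Q_Y$. Your closing remark about the representation-independence of item~(\ref{i:theo6}) is a reasonable caution but is not needed for the argument---the paper simply takes the formula as stated.
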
 
\begin{proof}[Proof of Claim~\ref{claim:delta}]
Recall $X_Y=S_Y\cup Q_Y$. We begin with the case of $x  \in S_Y$.  Express $x$ by 
\[
x = p_1\cdots p_l \sum^l_{i=1} \frac{a_i}{p_i}.
\]
By Theorem \ref{theo:mainmore} part \eqref{i:theo6}, 
\begin{align*}
\Delta_{Y'}(\phi_k(x)) &= \Delta_{Y'}(n x + k p_1 \cdots p_{l-1}) \\
&= \sum^{l-1}_{i=1} \left \lfloor \frac{a_i}{p_i} \right \rfloor + \left \lfloor \frac{na_l + k}{np_l} \right \rfloor +1.
\end{align*}
Since $0 \leq k < n$, it is straightforward to check that   
\[
\left \lfloor \frac{na_l + k}{np_l} \right \rfloor = \left \lfloor \frac{a_l}{p_l} \right \rfloor.
\]
Therefore, 
\begin{equation}\label{eqn:branchedembedding}
\Delta_{Y'}(\phi_k(x)) = \sum^{l}_{i=1} \left \lfloor \frac{a_i}{p_i} \right \rfloor +1= \Delta_Y(x).   
\end{equation}  

\noindent On the other hand, if $y = N_Y - x \in Q_Y$, then by Equations (\ref{eq:phik})-(\ref{eqn:branchedembedding}) and Theorem \ref{theo:mainmore}  part \eqref{i:theo3},
\begin{align*}
\Delta_{Y'}(\phi_k(y)) &= \Delta_{Y'}(N_{Y'} - (n - 1 -k)p_1 \cdots p_{l-1} - n x) \\
&= - \Delta_{Y'}(n x + (n - 1 -k)p_1 \cdots p_{l-1} ) \\
&= -\Delta_Y(x) \\
&= \Delta_Y(y).    
\end{align*}
\end{proof}

It remains to see that for $i \neq j$, the images of $\phi_i$ and $\phi_j$ are disjoint.  Suppose that $\phi_i(z_1) = \phi_j(z_2)$ for some $z_1, z_2 \in X_Y$.  In this case, either both $z_1,z_2 \in S_Y$ or $z_1,z_2 \in Q_Y$.  First, if $z_1,z_2 \in S_Y$, then 
\[
\phi_i(z_1) \equiv ip_1\cdots p_{l-1} (\text{mod } n) \; \text{ and } \; \phi_j(z_2) \equiv j p_1 \cdots p_{l-1} (\text{mod } n).
\]
Therefore, 
\[
i p_1 \cdots p_{l-1} \equiv j p_1 \cdots p_{l-1} (\text{mod } n).
\]
Therefore, $i \equiv j (\text{mod } n)$ since $\gcd (p_k,n) = 1$ for all $1 \leq k \leq l-1$.  However, since $0 \leq i, j \leq n-1$, we must have that $i = j$.  A similar argument applies to the case of $z_1,z_2 \in Q_Y$.  
\end{proof}

The proof of Theorem~\ref{thm:fiberbranched} also implies a weaker rank inequality for $\HFhat$. 

\begin{proposition}\label{prop:fiberbranchedhat} We have
$\displaystyle
 \rank \HFhat(\Sigma(p_1,\ldots,p_l)) \leq \rank \HFhat(\Sigma(p_1,\ldots,p_{l-1},n p_l)).  
$
\end{proposition}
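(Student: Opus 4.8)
The plan is to reuse the embeddings $\phi_k$ constructed in the proof of Theorem~\ref{thm:fiberbranched}, but now extract information about $\widehat{\mathbb{H}}$ rather than $\mathbb{H}_{\mathrm{red}}$. By Theorem~\ref{theo:Nemethi} and the orientation-reversal duality, it suffices to show that $\rank \widehat{\mathbb{H}}(\Gamma_{\Delta_Y}) \leq \rank \widehat{\mathbb{H}}(\Gamma_{\Delta_{Y'}})$. Since a single embedding already yields a rank inequality for $\widehat{\mathbb{H}}$ by part~(1) of Corollary~\ref{cor:embed}, I do not even need the full strength of the $n$-fold disjointness; I would simply invoke the embedding $\phi_0$ (or any one of the $\phi_k$) from the proof of Theorem~\ref{thm:fiberbranched}.

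Concretely, I would first recall that $\phi_0 : X_Y \to X_{Y'}$, $z \mapsto nz$, was shown in the proof of Theorem~\ref{thm:fiberbranched} to be an embedding of delta sequences (it is an injective, order-preserving morphism satisfying Properties~(\ref{defn:emb1}) and~(\ref{defn:emb2}) of Definition~\ref{def:embed}, the latter holding with equality by Claim~\ref{claim:delta}). Then I would apply part~(1) of Corollary~\ref{cor:embed} to this embedding to conclude
$$
\rank \widehat{\mathbb{H}}(\Gamma_{\Delta_Y}) \leq \rank \widehat{\mathbb{H}}(\Gamma_{\Delta_{Y'}}).
$$
Finally, translating through Theorem~\ref{theo:Nemethi} part~(3) and the duality $\rank \widehat{\mathbb{H}}(\Gamma_{\Delta_Y}) = \rank \HFhat(Y)$ established in Section~\ref{ss:deltaforseif}, this gives exactly $\rank \HFhat(\Sigma(p_1,\ldots,p_l)) \leq \rank \HFhat(\Sigma(p_1,\ldots,p_{l-1},np_l))$, as claimed.

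There is essentially no obstacle here, which is why the inequality is \emph{weaker}: the factor of $n$ is lost because Corollary~\ref{cor:embed} provides no additivity statement for $\widehat{\mathbb{H}}$ analogous to part~(2) for $\mathbb{H}_{\mathrm{red}}$. Proposition~\ref{prop:hatcomp} only tells us that passing to a delta subsequence can only decrease $\rank \widehat{\mathbb{H}}$, so the $n$ disjoint embeddings collectively give no more than what one of them gives. The only minor point worth noting in the write-up is that one must exclude the cases $Y = S^3$ or $\Sigma(2,3,5)$, where $\HFred = 0$ but $\HFhat$ is still nonzero; in those degenerate cases the inequality holds trivially since $\rank \HFhat = 1$ for $S^3$ and the right-hand side is at least $1$ for any Seifert homology sphere. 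Thus the proof amounts to a one-line appeal to the already-constructed embedding together with the $\widehat{\mathbb{H}}$ half of Corollary~\ref{cor:embed}.
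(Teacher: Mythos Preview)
Your proposal is correct and matches the paper's own proof essentially verbatim: the paper simply recalls that an embedding of $\Delta_Y$ into $\Delta_{Y'}$ was constructed in the proof of Theorem~\ref{thm:fiberbranched}, then invokes Corollary~\ref{cor:embed} (part~(1)) and Theorem~\ref{theo:Nemethi}. Your additional remarks about why the factor of $n$ is lost and the handling of $S^3$ and $\Sigma(2,3,5)$ are accurate elaborations but not needed for the argument.
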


\begin{proof}
Let $Y = \Sigma(p_1,\ldots,p_l)$ and $Y' = \Sigma(p_1,\ldots,p_{l-1},n p_l)$ as before. In the proof of Theorem~\ref{thm:fiberbranched} we constructed an embedding of $\Delta_Y$ into $\Delta_{Y'}$. Hence Corollary~\ref{cor:embed} and Theorem~\ref{theo:Nemethi} finish the proof.
\end{proof}

\section{Proof of the Partial Order Inequality}\label{sec:par}
Our purpose is to prove Theorem \ref{thm:monotonicity}. First we prove a lemma about our numerical semigroups. Let $p_1,\dots,p_l$ be pairwise relatively prime positive integers with $p_i\geq 2$ for all $i=1,\dots,l$. Let $G$ be the numerical semigroup generated by $\displaystyle\frac{p_1\cdots p_l}{p_i}$ for $i=1,\dots,l$.  

\begin{lemma}\label{lem:normalform}
Every element $n$ of $G$ can be uniquely written as 
\begin{equation}\label{eq:normalform}
n=p_1\cdots p_l\left ( k+\sum _{i=1}^l\frac{x_i}{p_i} \right ),
\end{equation}

\noindent for some integers $k\geq 0$ and $0\leq x_i <p_i$, for all $i=1,\dots, l$.
\end{lemma}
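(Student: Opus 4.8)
The plan is to work throughout with the shorthand $P := p_1\cdots p_l$ and the generators $g_i := P/p_i$, each of which is a genuine integer since $p_i \mid P$. With this notation an element of $G$ is by definition a non-negative integer combination $\sum_i c_i g_i$, and the asserted normal form \eqref{eq:normalform} reads exactly $n = Pk + \sum_{i=1}^l x_i g_i$ with $k \geq 0$ and $0 \leq x_i < p_i$. So the lemma splits into an existence statement and a uniqueness statement, and I would treat them separately.

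First I would establish existence by a reduction of coefficients. Starting from any representation $n = \sum_i c_i g_i$ with $c_i \geq 0$ (available because $n \in G$), I apply the division algorithm to each coefficient, writing $c_i = p_i q_i + x_i$ with $q_i \geq 0$ and $0 \leq x_i < p_i$. The one algebraic identity that drives everything is $p_i g_i = p_i\cdot (P/p_i) = P$, so that $\sum_i p_i q_i g_i = P\sum_i q_i$. Setting $k := \sum_i q_i \geq 0$ then gives $n = Pk + \sum_i x_i g_i$, which is precisely the claimed form; note that $k \geq 0$ is automatic since every $q_i \geq 0$.

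Next I would prove uniqueness by reduction modulo each prime power factor. Suppose two normal forms coincide, $Pk + \sum_i x_i g_i = Pk' + \sum_i x_i' g_i$, and fix an index $j$. By pairwise coprimality, $p_j$ divides $P$ and also divides $g_i = P/p_i$ for every $i \neq j$, because the product $P/p_i = \prod_{m \neq i} p_m$ still contains the factor $p_j$; on the other hand $g_j = \prod_{m \neq j} p_m$ is coprime to $p_j$ and hence invertible modulo $p_j$. Reducing the equality modulo $p_j$ therefore kills every term except the $j$-th, leaving $x_j g_j \equiv x_j' g_j \pmod{p_j}$, so $x_j \equiv x_j' \pmod{p_j}$, and since both $x_j, x_j'$ lie in $[0,p_j)$ we conclude $x_j = x_j'$. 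Ranging over all $j$ forces $x_i = x_i'$ for every $i$, whence $Pk = Pk'$ and finally $k = k'$.

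I expect the uniqueness half to be the only part with real content, and within it the crux is the use of pairwise coprimality to make all but one summand vanish modulo $p_j$; this is essentially a Chinese Remainder Theorem phenomenon. The existence half is routine once the identity $p_i g_i = P$ is isolated, and I would keep it short.
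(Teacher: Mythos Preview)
Your proof is correct and follows essentially the same approach as the paper: both arguments obtain existence by applying the division algorithm to the coefficients of the generators and collecting the quotients into $k$, and both obtain uniqueness by reducing modulo each $p_j$ to force $x_j=x_j'$ and hence $k=k'$. Your version is slightly more explicit in noting that $g_j$ is a unit modulo $p_j$, but otherwise the two proofs are identical up to the choice between integer and fractional notation.
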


\begin{proof}
First we show that the form (\ref{eq:normalform}) exists for every element of $G$. Let $n\in G$, and write it as a linear combination of the generators
$$n=p_1\cdots p_l \sum _{i=1}^l \frac{a_i}{p_i}.$$
\noindent Then apply the division algorithm to each term $\displaystyle \frac{a_i}{p_i}$ to get non-negative integers $k_i$ and $x_i$ such that
$$a_i=k_ip_i+x_i, \; \mathrm{with} \; 0\leq x_i <p_i.$$ 
\noindent Let 
$$ k:= \sum _{i=1}^l k_i.$$
\noindent Then $n$ is in the form (\ref{eq:normalform}).

\vspace{0.2cm}

Next we show that the form (\ref{eq:normalform}) is unique.  Suppose 
\begin{equation}\label{eq:normuniq}
p_1\cdots p_l\left ( k_1+\sum _{i=1}^l\frac{x_i}{p_i} \right )=p_1\cdots p_l\left ( k_2+\sum _{i=1}^l\frac{y_i}{p_i} \right ).
\end{equation}
\noindent Taking reductions of both sides modulo $p_i$, we see that 
$$x_i \equiv y_i \pmod  {p_i} \;\;\;\;\;\; \mathrm{for}\; \mathrm{all} \; i=1,\dots, l.$$ 
\noindent Since both $x_i$ and $y_i$ are in $[0,p_i)$, this forces $x_i=y_i$ for all $i=1 ,\dots,  l$. After this observation we see that Equation (\ref{eq:normuniq}) forces $k_1=k_2$.
\end{proof}

\begin{definition}
We shall call the unique expression (\ref{eq:normalform}) in Lemma \ref{lem:normalform}, \emph{the normal form} of the element $n$ of the numerical semigroup $G$.
\end{definition}

\begin{proof}[Proof of Theorem \ref{thm:monotonicity}] 
Let $Y = \Sigma(p_1,\ldots,p_l)$ and $Y' = \Sigma(q_1,\ldots,q_l)$ with $p_i \leq q_i$ for all $i$, and consider the associated abstract delta sequences $(X_Y,\Delta_Y)$ and $(X_{Y'},\Delta_{Y'})$ as described in Section \ref{ss:deltaforseif}. By Corollary \ref{cor:rankimmers}, it suffices to construct an immersion of $\Delta_Y$ into $\Delta_{Y'}$. Let $n\in G_Y$ and write it in the normal form 
\begin{equation}\label{eq:normal}
n=p_1\cdots p_l\left ( k+\sum _{i=1}^l\frac{x_i}{p_i} \right ),
\end{equation}
\noindent for some integers $k\geq 0$ and $0\leq x_i <p_i$, for all $i=1,\dots, l$. Then 
$$n':=q_1\cdots q_l\left ( k+\sum _{i=1}^l\frac{x_i}{q_i} \right )
$$
\noindent is the normal form of an element $n'$ of $G_{Y'}$, since $p_i \leq q_i$ for all $i$. Define $\phi:G_Y\to G_{Y'}$ by 
$$ \phi \left ( p_1\cdots p_l\left ( k+\sum _{i=1}^l\frac{x_i}{p_i} \right  ) \right ) =q_1\cdots q_l\left ( k+\sum _{i=1}^l\frac{x_i}{q_i} \right ).$$
\noindent We claim that $\phi(S_Y)\subseteq S_{Y'}$. To see this, suppose $n\leq N_Y$ and $n\in G_Y$ with normal form (\ref{eq:normal}). Note that 
$ \displaystyle
k+\sum _{i=1}^l\frac{x_i}{p_i} \leq  l-2 -\sum _{i=1}^l\frac{1}{p_i}
$
. Then

$$  \frac{\phi(n)}{q_1\cdots q_l}=k+\sum _{i=1}^l\frac{x_i}{q_i}  \leq  k+\sum _{i=1}^l\frac{x_i}{p_i} \leq  l-2 -\sum _{i=1}^l\frac{1}{p_i} \leq  l-2 -\sum _{i=1}^l\frac{1}{q_i}=\frac{N_{Y'}}{q_1\cdots q_l},$$
\noindent which implies $\phi (n) \leq N_{Y'}$. Hence we have a map $\phi: S_Y \to S_{Y'}$. Next we extend this to a morphism $\phi :(X_Y,\Delta_Y)\to (X_{Y'},\Delta_{Y'})$ by requiring that $\phi(N_Y-x)=N_{Y'}-\phi (x)$, for all $x \in S_Y$.

\vspace{0.2cm}

We claim that $\phi$ is the required immersion. Let us first show that  $\phi$ satisfies
\begin{equation}\label{eq:quas}
\phi(x)+\phi(\tilde{x}) \leq \phi (x+\tilde{x}) \;\;\;\; \mathrm{for}\;\mathrm{all}\;x,\tilde{x}\in G_Y.
\end{equation}
\noindent Write $x$ and $\tilde{x}$ in normal form 
$$x=p_1\cdots p_l\left ( k+\sum _{i=1}^l\frac{x_i}{p_i} \right ),\;\;\;\;\tilde{x}=p_1\cdots p_l\left ( k+\sum _{i=1}^l\frac{\tilde{x}_i}{p_i} \right ).$$
\noindent Observe that both $\phi(x)+\phi(\tilde{x})$ and $\phi (x+\tilde{x})$ belong to $G_{Y'}$. A comparison of their normal forms shows the following:

\begin{itemize}
\item $\phi (x)+\phi (\tilde{x})= \phi (x+\tilde{x})$ if $x_i+\tilde{x}_i <p_i$ or $x_i+\tilde{x}_i\geq q_i$ for all $i=1,\dots, l$.

\item $\phi (x)+\phi (\tilde{x})< \phi (x+\tilde{x})$ if for some $i=1\dots l$, we have $p_i\leq x_i+\tilde{x}_i <q_i$. 
\end{itemize} 

Having shown that $\phi$ satisfies Inequality (\ref{eq:quas}), we will easily prove that $\phi$ is a semi-immersion. Suppose $x \leq y$  for some $x\in S_Y$ and $y\in Q_Y$. Write $y=N_Y -\tilde{x}$ for $\tilde{x}\in S_Y$, then 
$$\phi (x) + \phi (\tilde{x}) \leq \phi (x+\tilde{x}) \leq N_{Y'},$$
\noindent which implies
$$\phi(x) \leq N_{Y'} -\phi (\tilde{x})= \phi (N_Y-\tilde{x})=\phi(y).$$

Finally we verify that $\phi$ is an immersion. By the uniqueness of normal forms, $\phi$ is one-to-one. By part \eqref{i:theo6} of Theorem \ref{theo:mainmore}, $\Delta_Y(x)=\Delta_{Y'}(\phi(x))$, for all $x \in S_Y$. By symmetry  (part \eqref{i:theo3} of Theorem \ref{theo:mainmore}), we conclude $\Delta_Y(y)=\Delta_{Y'}(\phi(y))$, for all $y \in Q_Y$ as well. Hence $\phi$ is an immersion.
\end{proof}

\section{Proof of the Rank Inequality for Pinches}\label{sec:pin}
\subsection{Description}
The goal of this section is to prove Theorem \ref{thm:pinch}. It is clear that in order to prove Theorem~\ref{thm:pinch}, it suffices to prove $\rank \HFred(\Sigma(p_1,\ldots,p_l,qr)) \leq  \rank \HFred(\Sigma(p_1,\ldots,p_l,q,r))$.  We will therefore focus on this inequality for the rest of the section.  In order to obtain the latter, by Corollary~\ref{cor:ranksemi} it suffices to construct a well-behaved one-to-one semi-immersion from $\Delta_Y$ to $\Delta_{Y'}$, where $Y = \Sigma(p_1,\ldots,p_l,qr)$ and $Y' = \Sigma(p_1,\ldots,p_l,q,r)$.

\subsection{Numerical semigroups on two elements}
We are going to define two auxiliary (infinite) delta sequences and a well-behaved semi-immersion between them. This data will be used later in the next subsection in order to define the required well-behaved one-to-one semi-immersion of $\Delta_Y$ into $\Delta_{Y'}$.   Fix relatively prime integers $q,r \geq 2$ throughout this section.  We will study the semigroup $S_{q,r} := \{aq + br | a,b \geq 0\}$.  Recall that the  Frobenius number of $S_{q,r}$ is $qr-q-r$ \cite[Theorem 1.2]{BR}. In other words $qr-q-r\not \in  S_{q,r}$, and every integer $n\geq (q-1)(r-1)$ is in the semigroup $S_{q,r}$. We point out that since $q$ and $r$ are relatively prime, $\frac{(q-1)(r-1)}{2}$ is an integer.

\begin{lemma}\label{lem:semigroupcount} We have
$|\mathbb{N} \smallsetminus S_{q,r}| = |[0,qr-q-r] \cap S_{q,r} | = \frac{(q-1)(r-1)}{2}$.   
\end{lemma}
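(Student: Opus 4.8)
The plan is to prove both equalities in the statement $|\mathbb{N} \smallsetminus S_{q,r}| = |[0,qr-q-r] \cap S_{q,r}| = \frac{(q-1)(r-1)}{2}$ by exhibiting an explicit pairing of the relevant integers. The central observation I would exploit is a \emph{symmetry} of the semigroup $S_{q,r}$ about its Frobenius number $F := qr - q - r$: for every integer $n$ with $0 \leq n \leq F$, exactly one of $n$ and $F - n$ lies in $S_{q,r}$. This is the standard statement that numerical semigroups generated by two coprime integers are \emph{symmetric}. Once this symmetry is established, both equalities follow almost immediately.

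First I would prove the symmetry claim. Fix $n$ with $0 \leq n \leq F$ and suppose both $n \in S_{q,r}$ and $F - n \in S_{q,r}$, so that $n = aq + br$ and $F - n = cq + dr$ with $a,b,c,d \geq 0$. Adding these gives $F = qr - q - r = (a+c)q + (b+d)r$, which forces $qr = (a+c+1)q + (b+d+1)r$. Reducing modulo $q$ shows $q \mid (b+d+1)r$, and since $\gcd(q,r)=1$ this gives $q \mid (b+d+1)$; as $b+d+1 \geq 1$ we get $b + d + 1 \geq q$, and symmetrically $a + c + 1 \geq r$. Then $(a+c+1)q + (b+d+1)r \geq rq + qr > qr$, a contradiction, so $n$ and $F-n$ cannot both lie in $S_{q,r}$. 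Conversely, if neither $n$ nor $F-n$ lies in $S_{q,r}$, I would derive a contradiction by a parallel counting or direct argument; the cleanest route is to establish that for exactly one of each complementary pair membership holds, which can be packaged as: the map $n \mapsto F - n$ is a fixed-point-free involution on $[0,F] \cap \mathbb{Z}$ (note $F$ is odd since $(q-1)(r-1)$ is even and $F = (q-1)(r-1) - 1$, so $n = F-n$ is impossible) that interchanges $S_{q,r} \cap [0,F]$ with its complement in $[0,F]$.

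With the symmetry in hand, the counting is routine. The involution $n \mapsto F-n$ partitions the $F+1 = (q-1)(r-1)$ integers in $[0,F]$ into $\frac{(q-1)(r-1)}{2}$ complementary pairs, each containing exactly one element of $S_{q,r}$ and one gap. This immediately yields $|[0,F] \cap S_{q,r}| = \frac{(q-1)(r-1)}{2}$, giving the second equality. For the first equality, I note that every integer $n > F$ lies in $S_{q,r}$ (since the Frobenius number is the largest gap), so $\mathbb{N} \smallsetminus S_{q,r}$ is entirely contained in $[0,F]$; hence $|\mathbb{N} \smallsetminus S_{q,r}|$ equals the number of gaps in $[0,F]$, which is $(F+1) - |[0,F]\cap S_{q,r}| = (q-1)(r-1) - \frac{(q-1)(r-1)}{2} = \frac{(q-1)(r-1)}{2}$.

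The main obstacle is proving the symmetry cleanly, specifically the direction that at least one of $n, F-n$ lies in $S_{q,r}$ — the contradiction argument above handles ``not both,'' and ``at least one'' can be obtained by the same modular analysis run in reverse, or more efficiently by observing that ``not both'' together with the fixed-point-free involution already forces the two sets $S_{q,r} \cap [0,F]$ and its complement to have equal size only if the involution swaps them, which requires knowing each pair meets $S_{q,r}$. I would therefore prove the sharper statement $n \in S_{q,r} \iff F - n \notin S_{q,r}$ directly via the representation theory of $S_{q,r}$: writing any $n$ uniquely as $n \equiv bq \pmod{?}$ is delicate, so instead I expect to lean on the Apéry-set / standard-representation description, namely that each residue class modulo $q$ has a unique minimal representative in $S_{q,r}$, and track how this representative behaves under $n \mapsto F - n$. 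This bookkeeping is the only genuinely fiddly part; the rest is immediate from Lemma-level facts already quoted from \cite{BR}.
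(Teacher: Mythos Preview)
Your approach is correct but quite different from the paper's: the paper gives a one-line proof, simply citing Sylvester's theorem from \cite{BR} (that exactly half of the integers in $[1,(q-1)(r-1)]$ lie in $S_{q,r}$) and deducing both equalities from that. You instead reprove this symmetry from scratch, which is more self-contained and makes the lemma independent of the external reference; the paper's version is shorter but outsources the content. Note also that the very next lemma in the paper (Lemma~\ref{lemm:symm}) is exactly the symmetry statement you are establishing, and the paper derives it \emph{from} the present lemma rather than the other way around---so your ordering reverses the paper's logic.

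Your ``not both'' argument is clean and complete. The ``at least one'' direction, which you flag as the fiddly part, has a short direct proof that avoids Ap\'ery sets: given $0 \le n \le F$ with $n \notin S_{q,r}$, write $n = aq + br$ with $a,b \in \mathbb{Z}$ (B\'ezout), and adjust by multiples of $(r,-q)$ so that $0 \le a \le r-1$. Then $b \ge 0$ would force $n \in S_{q,r}$, so $b \le -1$, and hence
\[
F - n = (r-1-a)q + (-1-b)r
\]
has both coefficients nonnegative, giving $F - n \in S_{q,r}$. This completes the pairing argument and the rest of your counting goes through exactly as written.
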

\begin{proof}
This follows from Sylvester's Theorem which says that precisely half of the integers between $1$ and $(q-1)(r-1)$ belong to  $S_{q,r}$. See for example \cite[Therem 1.3]{BR}   for the proof.
\end{proof}

\begin{lemma}\label{lemm:symm}
Suppose $x\in[0,qr-q-r]$. Then $x\in S_{q,r}$ if and only if $qr-q-r-x\notin S_{q,r}$
\end{lemma}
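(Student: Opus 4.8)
The plan is to prove the symmetry statement $x \in S_{q,r} \iff qr-q-r-x \notin S_{q,r}$ for $x \in [0, qr-q-r]$ by a counting argument, leveraging Lemma~\ref{lem:semigroupcount} which we are permitted to assume. First I would set $F = qr-q-r$ (the Frobenius number) and consider the involution $\sigma(x) = F - x$ on the integer interval $[0,F]$. Since $F$ is not in $S_{q,r}$, we know $0 \in S_{q,r}$ but $F \notin S_{q,r}$, so the claim holds at the endpoints and $\sigma$ has no fixed point issues to worry about on the parity of $F$ (in fact $F = qr-q-r$ is odd precisely when exactly one of $q,r$ is even, but $\sigma$ maps integers to integers regardless).

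The cleanest route is to establish one direction directly and then deduce the full equivalence by counting. For the forward direction, I would assume $x \in S_{q,r} \cap [0,F]$ and show $F - x \notin S_{q,r}$; the key obstacle is ruling out that both $x$ and $F-x$ lie in the semigroup. Suppose for contradiction that $x = aq+br$ and $F - x = cq + dr$ with $a,b,c,d \geq 0$. Adding these gives $F = (a+c)q + (b+d)r$, which would exhibit $qr - q - r$ as an element of $S_{q,r}$, contradicting that $F$ is the Frobenius number (i.e. $F \notin S_{q,r}$). This handles the forward implication cleanly.

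For the reverse direction, rather than argue directly I would invoke a counting symmetry. The involution $\sigma$ pairs up the $F+1$ integers in $[0,F]$ (with no fixed point since $F$ is odd: as $q,r$ are coprime, not both are even, so $qr-q-r$ is odd, making $F/2$ a non-integer and $\sigma$ fixed-point-free). The forward direction shows $\sigma$ maps $S_{q,r} \cap [0,F]$ injectively into the complement $[0,F] \setminus S_{q,r}$. By Lemma~\ref{lem:semigroupcount}, both $S_{q,r} \cap [0,F]$ and its complement in $[0,F]$ have exactly $\frac{(q-1)(r-1)}{2}$ elements (the complement being $\mathbb{N} \setminus S_{q,r}$ restricted appropriately, all of which lies in $[0,F]$). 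Since an injection between two finite sets of equal cardinality is a bijection, $\sigma$ restricts to a bijection from $S_{q,r}\cap[0,F]$ onto $([0,F]\setminus S_{q,r})$, which immediately gives the reverse implication: if $x \notin S_{q,r}$ then $F - x \in S_{q,r}$, equivalently if $F-x \notin S_{q,r}$ then $x \in S_{q,r}$.

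The main obstacle I anticipate is making the cardinality bookkeeping airtight, specifically confirming that every non-gap of $S_{q,r}$ outside $[0,F]$ does not interfere, i.e. that the relevant counting is genuinely confined to $[0,F]$. This is guaranteed by the Frobenius property (every integer $\geq F+1$ lies in $S_{q,r}$, so all gaps lie in $[0,F]$) together with Lemma~\ref{lem:semigroupcount}, so the two sets being compared both have size exactly $\frac{(q-1)(r-1)}{2}$. Once this equality of cardinalities is in hand, the equivalence follows formally from the injection-plus-equal-cardinality argument, and no further case analysis on the parities of $q$ and $r$ is needed.
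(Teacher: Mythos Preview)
Your argument is correct and follows essentially the same approach as the paper: the forward implication comes from the fact that $qr-q-r\notin S_{q,r}$ (so $x$ and $F-x$ cannot both lie in the semigroup), and the converse is obtained by the counting in Lemma~\ref{lem:semigroupcount}. Your remarks about the parity of $F$ and the absence of a fixed point for $\sigma$ are harmless but unnecessary, since the counting argument goes through for any injection between finite sets of equal cardinality regardless of fixed points.
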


\begin{proof}
We already know $qr-q-r \notin S_{q,r}$, so  $qr - q - r - x$ is not  in the semigroup if $x\in S_{q,r}$.   Lemma~\ref{lem:semigroupcount} proves that the converse is also true. 
\end{proof}

\begin{lemma}\label{lem:psi}
There exists a unique bijective function $\psi : \mathbb{N} \to S_{q,r}$ such that: 
\begin{enumerate}
\item \label{lempsipart1} for $x_1,x_2 \in \mathbb{N}$, $x_1 < x_2$ implies $\psi(x_1) < \psi(x_2)$, 
\item \label{lempsipart2} if $x \geq \frac{(q-1)(r-1)}{2}$, then $\psi(x) = x + \frac{(q-1)(r-1)}{2}$.  
\end{enumerate}
\end{lemma}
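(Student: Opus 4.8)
The plan is to construct $\psi$ as the unique order-preserving enumeration of the infinite set $S_{q,r}$ and then check that this enumeration automatically satisfies condition (\ref{lempsipart2}). Set $g := \frac{(q-1)(r-1)}{2}$, which is an integer since $q$ and $r$ are relatively prime, and observe that the Frobenius number equals $qr-q-r = 2g-1$. Since $S_{q,r}$ is an infinite subset of $\mathbb{N}$ (it contains every integer $\geq 2g = (q-1)(r-1)$), listing its elements in increasing order defines a strictly increasing bijection $\psi : \mathbb{N} \to S_{q,r}$. This is the \emph{only} function satisfying condition (\ref{lempsipart1}), because an order-preserving bijection between $\mathbb{N}$ and any infinite subset of $\mathbb{N}$ (both of order type $\omega$) is uniquely determined. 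Thus condition (\ref{lempsipart1}) holds by construction and also settles the uniqueness assertion of the lemma; the remaining content is the verification of condition (\ref{lempsipart2}).

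First I would count how many elements of $S_{q,r}$ lie below $2g$. By Lemma~\ref{lem:semigroupcount}, the interval $[0, qr-q-r] = [0, 2g-1]$ contains exactly $g$ elements of $S_{q,r}$. These are precisely the first $g$ terms of the increasing enumeration, namely $\psi(0), \psi(1), \ldots, \psi(g-1)$, and in particular $\psi(x) < 2g$ whenever $x < g$. On the other hand, every integer $\geq 2g$ belongs to $S_{q,r}$, so the semigroup elements that are $\geq 2g$ are exactly $2g, 2g+1, 2g+2, \ldots$, occurring consecutively.

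Combining these two facts, the term immediately following $\psi(g-1)$ in the enumeration is the smallest semigroup element $\geq 2g$, which is $2g$ itself; hence $\psi(g) = 2g$. Since the remaining semigroup elements form the consecutive block $2g, 2g+1, 2g+2, \ldots$, a one-line induction gives $\psi(g+j) = 2g + j$ for every $j \geq 0$. Writing $x = g + j$, this says $\psi(x) = x + g = x + \frac{(q-1)(r-1)}{2}$ for all $x \geq g$, which is exactly condition (\ref{lempsipart2}).

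I do not anticipate a genuine obstacle: the only inputs are the infinitude and consecutivity of $S_{q,r}$ past the Frobenius number together with the exact count supplied by Lemma~\ref{lem:semigroupcount}, while uniqueness is a formal consequence of order-preservation. The sole point needing care is the bookkeeping identity $qr - q - r = 2g - 1$, which is what aligns the counting range of Lemma~\ref{lem:semigroupcount} with the threshold $2g$ appearing in condition (\ref{lempsipart2}); getting this off by one would misstate the index at which the formula $\psi(x) = x + g$ switches on.
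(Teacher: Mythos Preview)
Your proposal is correct and follows essentially the same approach as the paper: both define $\psi$ as the unique order-preserving enumeration of $S_{q,r}$ and then verify condition~(\ref{lempsipart2}) by combining the count from Lemma~\ref{lem:semigroupcount} with the fact that every integer past the Frobenius number lies in $S_{q,r}$. Your bookkeeping with $g=\frac{(q-1)(r-1)}{2}$ and the identity $qr-q-r=2g-1$ is slightly more explicit than the paper's, but the argument is the same.
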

\begin{proof}
First, define $\psi(0) = 0$.  Next, we define $\psi(i)$ to be the $i$th non-zero element in $S_{q,r}$, where the order structure on $S_{q,r}$ is just the induced one from $\mathbb{N}$.  This is clearly the unique order preserving bijection.  We now just want to show that part \eqref{lempsipart2} holds.  As discussed, for all $z > qr - q -r$, we have $z \in S_{q,r}$.  Therefore, if $\psi(x) = z$, where $z > qr - q -r$, then $\psi(x + k) = \psi(x) + k$ for all $k \geq 0$.  It thus suffices to show that $\psi(\frac{(q-1)(r-1)}{2}) = (q-1)(r-1)$.  This follows since there are exactly $\frac{(q-1)(r-1)}{2}$ elements of $[0,(q-1)(r-1)]$ which are not in $S_{q,r}$ by Lemma~\ref{lem:semigroupcount}.  
\end{proof}

\vspace{0.1cm}

 We can define delta functions, $\Delta^{qr}$ on $\mathbb{N}$ by $\Delta^{qr}(x) = 1 + \lfloor \frac{x}{qr} \rfloor$ and $\Delta_{q,r}$ on $S_{q,r}$ by $\Delta_{q,r}(aq + br) = 1 + \lfloor \frac{a}{r} \rfloor + \lfloor \frac{b}{q} \rfloor$.  We regard $(\mathbb{N},\Delta^{qr})$ and $(S_{q,r},\Delta_{q,r})$ as abstract delta sequences, even though both sets are infinite and the delta functions attain only positive values. The map $\psi$ is  a semi-immersion which is also one-to-one.  Therefore, we are still able to study good and bad points as defined in Section \ref{subsec:wel}. As before, $G(\psi)$ and $B(\psi)$ denote the sets of good and bad points of $\psi$ respectively. For any point $x$ belonging to either one of these sets, $d_\psi(x)$ denotes its defect.

\begin{lemma}\label{lem:goodbad}

The one-to-one semi-immersion $\psi:(\mathbb{N},\Delta^{qr})\to (S_{q,r},\Delta_{q,r})$ is well-behaved. In fact $\psi$ admits a control function $\theta_{q,r}:B(\psi)\to G(\psi)$ such that $d_\psi(b)=-d_\psi(\theta_{q,r}(b))=1$ for all $b\in B(\psi)$.
\end{lemma}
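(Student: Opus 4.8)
The plan is to analyze the integer‑valued function $f(x) := d_\psi(x) = \Delta^{qr}(x) - \Delta_{q,r}(\psi(x))$ on $\mathbb{N}$ (both delta functions are positive, so the absolute values in the definition of the defect play no role), and to reduce everything to two facts: that $f$ takes values only in $\{-1,0,1\}$, and that its partial sums $F(n) := \sum_{x=0}^{n} f(x)$ satisfy $F(n)\le 0$ for every $n$. Granting these, I would construct $\theta_{q,r}$ greedily: listing the bad points $b_1 < b_2 < \cdots$ (those with $f=1$) in increasing order, assign to $b_i$ the largest good point (one with $f=-1$) lying below $b_i$ that has not yet been used. Since $F(b_i)\le 0$ reads $|B(\psi)\cap[0,b_i]| \le |G(\psi)\cap[0,b_i)|$, at each stage there are at least $i$ good points below $b_i$ while only $i-1$ have been claimed, so a valid choice always exists and automatically $\theta_{q,r}(b_i) < b_i$. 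Here every point of $\mathbb{N}$ lies in $S_1$ and $Q_1=\emptyset$, so conditions \eqref{def:cont1} and \eqref{def:cont2} of Definition~\ref{d:control} collapse to the requirement $\theta_{q,r}(b)<b$, which holds by construction; condition \eqref{def:cont3}, together with the sharper assertion $d_\psi(b) = -d_\psi(\theta_{q,r}(b)) = 1$, is immediate from $f\in\{-1,0,1\}$, since a bad point then has defect exactly $1$ and every good point defect exactly $-1$.

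To see that $f\in\{-1,0,1\}$, set $g := \tfrac{(q-1)(r-1)}{2}$. For $x<g$ one has $\psi(x) < \psi(g) = 2g = (q-1)(r-1) < qr$ by Lemma~\ref{lem:psi}; as every element of $S_{q,r}$ below $qr$ has a unique representation $aq+br$ with $a,b\ge 0$, both $\Delta^{qr}(x)$ and $\Delta_{q,r}(\psi(x))$ equal $1$, so $f(x)=0$. For $x\ge g$ I would use $\psi(x) = x+g$ from Lemma~\ref{lem:psi} together with the classical denumerant formula, which expresses $\Delta_{q,r}(m)$ as $\tfrac{m}{qr}+1$ minus two fractional‑part terms lying in $[0,1)$. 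Subtracting this from $\Delta^{qr}(x) = \tfrac{x}{qr}+1-\{x/qr\}$ writes $f(x)$ as $-\tfrac{g}{qr}$ plus a signed sum of three quantities in $[0,1)$. Since $0 < \tfrac{g}{qr} < \tfrac12$, this forces $f(x)\in\left(-\tfrac32,\,2\right)$, and as $f$ is integer‑valued it lies in $\{-1,0,1\}$.

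The main obstacle is the partial‑sum inequality $F(n)\le 0$, equivalently $\sum_{x=0}^{n}\Delta^{qr}(x) \le \sum_{j=0}^{n}\Delta_{q,r}(\psi(j))$. Writing $s_n := \psi(n)$ and interpreting each side as a lattice‑point count, the left side equals $\#\{(c,y)\in\mathbb{Z}_{\ge 0}^2 : c\,qr + y \le n\}$, while the right side equals $\#\{(a,b)\in\mathbb{Z}_{\ge 0}^2 : aq+br \le s_n\}$. I would prove the inequality by an explicit injection from the first set to the second: given $(c,y)$ with $c\,qr+y\le n$, write $\psi(y) = a_0 q + b_0 r$ in its unique form with $0\le a_0 < r$, and send $(c,y)\mapsto (a_0 + cr,\, b_0)$. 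This lands in the target set, because
\[
(a_0+cr)q + b_0 r = \psi(y) + c\,qr \le \psi(n-c\,qr) + c\,qr \le \psi(n) = s_n,
\]
where the first inequality uses $y\le n-c\,qr$ and monotonicity of $\psi$, and the second uses that $\psi$ is strictly increasing on $\mathbb{N}$, hence $\psi(m+k)\ge \psi(m)+k$. It is injective since from $(a_0+cr,\,b_0)$ one recovers $b_0$, then $a_0 = (a_0+cr)\bmod r$ and $c = \lfloor (a_0+cr)/r\rfloor$, and finally $y$ via $\psi(y)=a_0 q + b_0 r$. This yields $F(n)\le 0$ for all $n$, closing the remaining gap and completing the plan.
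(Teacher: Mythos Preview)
Your argument is correct, and it takes a genuinely different route from the paper's own proof.  The paper proceeds by an explicit construction: it first pins down where bad points can live, showing that $B(\psi)\cap[kqr,(k+1)qr)\subseteq[kqr,\,kqr+\tfrac{(q-1)(r-1)}{2})$ for each $k\ge 1$, and then \emph{defines} the control function by the closed formula $\theta_{q,r}(b)=2kqr-b-1$.  The verification that $\theta_{q,r}(b)$ is good with defect $-1$ uses the semigroup symmetry of Lemma~\ref{lemm:symm} directly (if $\psi(b)-kqr\notin S_{q,r}$, then its reflection $qr-q-r-(\psi(b)-kqr)$ \emph{is} in $S_{q,r}$, which translates into $\psi(\theta_{q,r}(b))-kqr\in S_{q,r}$).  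No denumerant formula and no counting argument is needed; everything is done by explicit arithmetic with the Frobenius/symmetry structure of $S_{q,r}$.

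Your approach trades this explicit symmetry for two more analytic ingredients: Popoviciu's denumerant formula, which you use to squeeze $f$ into $\{-1,0,1\}$, and a lattice‑point injection establishing the running inequality $F(n)\le 0$, after which a greedy (Hall‑type) matching produces \emph{some} control function with the required defect property.  The upshot is a less explicit $\theta_{q,r}$, but the later application in the proof of Theorem~\ref{thm:pinch} only uses the defining properties of a control function together with the defect statement $|d_\psi(b)|=|d_\psi(\theta_{q,r}(b))|=1$, so your construction slots in without change.  What you gain is a clean conceptual picture (the whole lemma becomes ``partial sums never go positive, hence a greedy matching exists''); what you lose is the self‑containedness of the paper's argument and the closed formula for $\theta_{q,r}$, which the paper records explicitly but does not need in any essential way downstream.
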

\begin{proof}
Before constructing the control function we need to make a few preparations.  First observe the following  about the values of our delta functions. If $k$ is a non-negative integer, and $x$ is an integer in the interval $[kqr,(k+1)qr)$, then $\Delta^{qr}(x) = k+1$. If $x$ is also in $S_{q,r}$, then $\Delta_{q,r}(x) = k$ or $k+1$.  Moreover, $\Delta_{q,r}(x)=k+1$ if and only if $x-kqr \in S_{q,r}$.  

\vspace{0.2cm}

Next we show that $\psi$ shifts all its bad points by $\frac{(q-1)(r-1)}{2}$. To see this, suppose $b\in B(\psi)$. Then by definition $\Delta^{qr}(b)>\Delta_{q,r}(\psi(b))\geq 1$. So $\Delta^{qr}(b)$ is at least 2 which implies $b \geq qr$. By Lemma \ref{lem:psi} part \eqref{lempsipart2}, we have  $\psi(b) = b + \frac{(q-1)(r-1)}{2}$. 

\begin{claim}\label{cl:sm1} 
We have $[kqr, (k+1)qr)\cap B(\psi) \subseteq [kqr,kqr + \frac{(q-1)(r-1)}{2})$, for every integer $k\geq 1$.
\end{claim}
\begin{proof}[Proof of  Claim \ref{cl:sm1}]
Suppose to the contrary that  $b \geq kqr + \frac{(q-1)(r-1)}{2}$ and that $b$ is a bad point of $\psi$. Then $\psi(b) \geq kqr + (q-1)(r-1)$ by the discussion above.  Since $\psi(b) - kqr \geq (q-1)(r-1)$, we have  $\psi(b) - kqr \in S_{q,r}$.  This implies that $\Delta_{q,r}(\psi(b)) \geq k+1$.  Since $b$ is bad, we must have that $\Delta^{qr}(b) \geq k+2$, implying $b\geq (k+1)qr$.  This contradicts $b$ being in $[kqr, (k+1)qr)$.  
\end{proof}

We saw that every bad point $b$  of $\psi$ must be in  $[kqr,kqr + \frac{(q-1)(r-1)}{2})$ for some $k\geq 1$.  We therefore have $\psi (b)\in [kqr,(k+1)qr)$. These give $\Delta^{qr}(b) = k+1$ and $\Delta_{q,r}(\psi(b)) = k$, so the defect of $b$ is $1$.

\vspace{0.2cm}

We are now ready to construct our control function.  Fix $k\geq 1$, and consider $b \in [kqr,kqr + \frac{(q-1)(r-1)}{2})\cap B(\psi)$.    Define
 \begin{equation}\label{eqn:thetaqr}
\theta_{q,r}(b) := 2kqr - b - 1.
\end{equation}
This defines a map $\theta_{q,r}: B(\psi) \to \mathbb{N}$. We will check that $\theta_{q,r}$ satisfies all the properties of being a control function given in Definition~\ref{d:control}. We can see that $\theta_{q,r}(b) \in [kqr - \frac{(q-1)(r-1)}{2}, kqr)$, so $\Delta^{qr}(\theta_{q,r}(b)) = k$ and $\theta_{q,r}(b) < b$.  It is straightforward to check that $\theta_{q,r}$ is injective.  The following claim completes the proof, showing that $\theta_{q,r}(b)$ is good with the appropriate defect.  

\begin{claim}\label{cl:sm2} We have $\Delta_{q,r}(\psi(\theta_{q,r}(b))) = k+1$. Hence the defect of $\theta_{q,r}(b)$ is $-1$.  
\end{claim}
\begin{proof}[Proof of Claim \ref{cl:sm2}]
Since $\Delta_{q,r}(\psi(b)) = k$, we must have 
\[
b + \frac{(q-1)(r-1)}{2} - kqr \not \in S_{q,r}.
\]
Thus $b + \frac{(q-1)(r-1)}{2} - kqr < (q-1)(r-1)$, and Lemma \ref{lemm:symm} implies 
\[
\alpha_b = qr - q - r - \left ( b + \frac{(q-1)(r-1)}{2} - kqr \right ) \in S_{q,r}.
\]
Note that by construction, $\alpha_b \in [0,\frac{(q-1)(r-1)}{2})$.  Let's consider $\beta_b$ given by 
\[
\beta_b = kqr + \alpha_b = (2k+1)qr - q - r - b - \frac{(q-1)(r-1)}{2}.  
\]    
We have that $\beta_b \in [kqr, kqr + \frac{(q-1)(r-1)}{2})$.  
Since $\alpha_b \in S_{q,r}$ and $\beta_b \in [kqr, kqr + \frac{(q-1)(r-1)}{2})$, we have $\Delta_{q,r}(\beta_b) = k+1$.   
Furthermore, since $\theta_{q,r}(b) \geq \frac{(q-1)(r-1)}{2}$, Lemma \ref{lem:psi} gives
\[
\psi(\theta_{q,r}(b)) = \theta_{q,r}(b) + \frac{(q-1)(r-1)}{2} = 2kqr - b - 1 + \frac{(q-1)(r-1)}{2} = \beta_b,
\]  
and thus 
\[
\Delta_{q,r}(\psi(\theta_{q,r}(b))) = \Delta_{q,r}(\beta_b) = k+1.
\]    
\end{proof}
\end{proof}

\subsection{Constructing the map}
We complete the proof of Theorem~\ref{thm:pinch} in two steps.  The first is to find a rigid function (see Definition \ref{def:tight}) from $G_Y$ to $G_{Y'}$ where  $Y = \Sigma(p_1,\ldots,p_l,qr)$ and $Y' = \Sigma(p_1,\ldots,p_l,q,r)$, and $G_Y$ and $G_{Y'}$ are their associated semigroups given in Theorem \ref{theo:mainmore}.  The second is to show that the semi-immersion induced by Lemma~\ref{lem:tightimpliesrightveering} is well-behaved.  We would like a standard form for studying the elements of $G_Y$ and $G_{Y'}$.  We will always assume that an element $x$ of $G_Y$ is expressed in the form
\[
x =p_1 \cdots p_l \left [ z  + qr\left ( \sum^l_{i=1}  \frac{x_i}{p_i} \right ) \right ], 
\]
where $0 \leq x_i < p_i$, but we may have $z$ be arbitrarily large. Note that this is different from the normal form introduced in Section~\ref{sec:par}.  Like the normal form, this expression is unique.  We define a function $\pi : G_Y \to \mathbb{N}$ by $\pi(x) = z$.  

\vspace{0.2cm}

Similarly for $G_{Y'}$, we have a  decomposition
\[
x' = p_1 \cdots p_l \left [ aq  + br  + qr\left (\sum^l_{i=1}  \frac{x'_i}{p_i} \right ) \right ],
\]         
where $0 \leq x'_i < p_i$, but $a$ and $b$ may be arbitrarily large.  We define $\pi':G_{Y'} \to S_{q,r}$ by $\pi'(x') = aq + br$. Though the above decomposition is not unique,  the map $\pi '$ is well-defined.  

\vspace{0.2cm}

The values of $\Delta_Y$ and $\Delta_{Y'}$ can be  computed using Theorem \ref{theo:mainmore}:
\begin{equation}
\Delta_Y(x) = \Delta^{qr}(\pi(x)) \text{ and } \Delta_{Y'}(x') = \Delta_{q,r}(\pi'(x')).   
\end{equation}

Now define $\phi: G_Y \to G_{Y'}$ by 
\begin{equation}\label{eqn:phi}
\phi \left ( p_1 \cdots p_l \left [ z  + qr\left ( \sum^l_{i=1}  \frac{x_i}{p_i} \right ) \right ]\right ) = p_1 \cdots p_l \left [ \psi (z)  + qr\left ( \sum^l_{i=1}  \frac{x_i}{p_i} \right ) \right ],
\end{equation}
where $\psi$ is defined as in Lemma~\ref{lem:psi}.  Note that since $\psi(z) \in S_{q,r}$, the codomain of $\phi$ is  $G_{Y'}$.  

\begin{lemma}
The function $\phi$ is rigid. 
\end{lemma}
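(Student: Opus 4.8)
The plan is to verify the two defining conditions of Definition~\ref{def:tight} together with injectivity. The guiding observation is that $\phi$ alters only the ``$z$-coordinate'' of an element and leaves the fractional part $\sum_{i=1}^l x_i/p_i$ untouched. Writing $P = p_1\cdots p_l$, this gives, for $x = P\bigl[z + qr(\sum_{i=1}^l x_i/p_i)\bigr]\in G_Y$ in standard form, the clean formula
\[
\phi(x) - x = P\bigl(\psi(z) - z\bigr),
\]
so the entire lemma reduces to understanding the single-variable function $z \mapsto \psi(z) - z$ studied in Lemma~\ref{lem:psi}.

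First I would record injectivity. If $\phi(x) = \phi(x')$, I subtract and reduce modulo each $p_i$; since $qr\,\tfrac{P}{p_i}$ is a unit mod $p_i$ (the multiplicities $p_1,\dots,p_l,q,r$ are pairwise relatively prime) and $0\le x_i,x_i'<p_i$, this forces $x_i = x_i'$ for every $i$, whence $\psi(z)=\psi(z')$ and then $z = z'$ by injectivity of $\psi$; thus $x = x'$. For condition~\eqref{d:tight1}, note that $\psi$ is a strictly increasing map $\mathbb{N}\to\mathbb{N}$ with $\psi(0)=0$, so a one-line induction gives $\psi(z)\ge z$ for all $z$, and hence $\phi(x)-x = P(\psi(z)-z)\ge 0$.

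The heart of the argument is condition~\eqref{d:tight2}. I would first bound $\psi(z)-z$ uniformly: by Lemma~\ref{lem:psi}\eqref{lempsipart2} the difference equals $\frac{(q-1)(r-1)}{2}$ once $z\ge\frac{(q-1)(r-1)}{2}$, while in general $\psi(z)-z$ counts the elements of $[0,\psi(z)]\smallsetminus S_{q,r}$, which is at most $|\mathbb{N}\smallsetminus S_{q,r}| = \frac{(q-1)(r-1)}{2}$ by Lemma~\ref{lem:semigroupcount}. Therefore
\[
\phi(x) - x = P\bigl(\psi(z)-z\bigr) \le P\,\frac{(q-1)(r-1)}{2}.
\]
It then remains to compute $N_{Y'}-N_Y$ from the formula in Theorem~\ref{theo:mainmore}. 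Substituting the multiplicities of $Y = \Sigma(p_1,\dots,p_l,qr)$ (with $l+1$ fibers) and $Y' = \Sigma(p_1,\dots,p_l,q,r)$ (with $l+2$ fibers), the $qr$-weighted terms telescope and all but a few terms cancel, leaving
\[
N_{Y'}-N_Y = P(qr - q - r + 1) = P(q-1)(r-1).
\]
Combining the two displays yields exactly $\phi(x)-x \le \frac{N_{Y'}-N_Y}{2}$.

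The only genuinely delicate point is lining up these two computations: the maximal displacement $\psi(z)-z$ is bounded by \emph{half} the number of gaps of $S_{q,r}$, and the cancellation in $N_{Y'}-N_Y$ must produce precisely $P(q-1)(r-1)$ so that the factor of $\tfrac12$ matches. Everything else — injectivity and the monotonicity $\phi(x)\ge x$ — follows formally from the properties of $\psi$ already established, so I expect the bookkeeping in the $N_{Y'}-N_Y$ computation, rather than any conceptual difficulty, to be the main thing to get right.
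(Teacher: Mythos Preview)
Your proof is correct and follows essentially the same route as the paper: reduce everything to $\phi(x)-x = P(\psi(z)-z)$, use injectivity and monotonicity of $\psi$ for the first two conditions, and match the bound $\psi(z)-z\le\frac{(q-1)(r-1)}{2}$ against the direct computation $N_{Y'}-N_Y = P(q-1)(r-1)$. One small slip in your closing commentary: $\frac{(q-1)(r-1)}{2}$ \emph{is} the number of gaps of $S_{q,r}$, not half of it, but this does not affect the argument itself.
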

\begin{proof}
Recall from Lemma~\ref{lem:psi} that $\psi$ is injective. This implies $\phi$ is also injective. Since $\psi(\pi(x)) \geq \pi(x)$ for all $x \in \mathbb{N}$, we have  $\phi(x) \geq x$ for all $x \in G_Y$.  
We also have that 
\begin{align*}
N_{Y'} - N_Y & = p_1 \cdots p_l\left ( \left [ lq r - (q+r) - qr \left (\sum^l_{i=1} \frac{1}{p_i} \right )\right ]   - \left [ (l-1)qr -1 - qr \left (\sum^l_{i=1}  \frac{1}{p_i}\right ) \right ]\right )\\
& = p_1 \cdots p_l(qr - q - r + 1) = p_1 \cdots p_l(q - 1)(r-1).  
\end{align*}
Therefore by Lemma~\ref{lem:psi}, 
\begin{align*}
\phi(x) - x & = p_1 \cdots p_l (\psi(\pi(x)) - \pi(x)) \\
& \leq p_1 \cdots p_l \frac{(q-1)(r-1)}{2}  \\
&= \frac{N_{Y'} - N_Y}{2}.  
\end{align*}
Thus, $\phi$ is rigid.  
\end{proof}

\begin{proof}[Proof of Theorem~\ref{thm:pinch}]
As discussed, it suffices to show 
\[
\rank \HFred(Y) \leq \rank \HFred(Y')
\]
where $Y = \Sigma(p_1,\ldots,p_l,qr)$ and $Y' = \Sigma(p_1,\ldots,p_l,q,r)$. We have given a rigid map $\phi: G_Y \to G_{Y'}$ in Equation (\ref{eqn:phi}).  We study the induced one-to-one semi-immersion $\phi: (X_Y,\Delta_Y) \to (X_{Y'},\Delta_{Y'})$ (see Lemma \ref{lem:tightimpliesrightveering}).  We want to see that $\phi$ is well-behaved.  Hence we need to construct a control function $\theta_Y:B(\phi)\to G(\phi)$ in the sense of Definition \ref{d:control}. First we need to do a few preparations. Throughout we shall assume that every element $x\in S_Y$ is written in the form
\begin{equation}\label{eq:stand}
x=p_1\cdots p_l\left [\pi (x) + qr  \left (\sum _{i=1}^l \frac{x_i}{p_i}\right ) \right ] .
\end{equation}

\begin{claim}\label{cl:bg1}
Suppose $x\in S_Y$. Then $d_\phi (x) =d_\psi (\pi (x))$. In particular, $x\in B(\phi)$ if and only if $\pi (x) \in B(\psi)$. Similarly, $x\in G(\phi)$ if and only if $\pi (x) \in G(\psi)$. 
\end{claim}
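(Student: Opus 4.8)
The plan is to reduce the computation of the defect $d_\phi$ on $S_Y$ entirely to the defect $d_\psi$ of the auxiliary one-to-one semi-immersion $\psi$ from Lemma~\ref{lem:psi}, by verifying that $\phi$ intertwines the two projections; that is, that $\pi'(\phi(x)) = \psi(\pi(x))$ for every $x \in S_Y$. Once this commutation is established, the claim follows immediately from the defining identities $\Delta_Y(x) = \Delta^{qr}(\pi(x))$ and $\Delta_{Y'}(x') = \Delta_{q,r}(\pi'(x'))$ recorded just before the statement.

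First I would observe that, because $\phi$ is the one-to-one semi-immersion induced by a rigid map, it carries $S_Y$ into $S_{Y'}$ (this is part of Lemma~\ref{lem:tightimpliesrightveering}). Hence for $x \in S_Y$ both $\Delta_Y(x)$ and $\Delta_{Y'}(\phi(x))$ are positive, so the absolute values in the definition of the defect from Section~\ref{subsec:wel} are harmless; the same is true for $d_\psi$ since $\Delta^{qr}$ and $\Delta_{q,r}$ take only positive values. Writing $x$ in the standard form~(\ref{eq:stand}), the definition~(\ref{eqn:phi}) of $\phi$ gives
\[
\phi(x) = p_1\cdots p_l\left[\psi(\pi(x)) + qr\left(\sum_{i=1}^l \frac{x_i}{p_i}\right)\right].
\]
Here $\psi(\pi(x)) \in S_{q,r}$ and $0 \le x_i < p_i$, so the right-hand side is already in the shape used to define $\pi'$, and we read off $\pi'(\phi(x)) = \psi(\pi(x))$. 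Combining these facts yields
\[
d_\phi(x) = \Delta_Y(x) - \Delta_{Y'}(\phi(x)) = \Delta^{qr}(\pi(x)) - \Delta_{q,r}(\psi(\pi(x))) = d_\psi(\pi(x)),
\]
which is the first assertion.

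The two equivalences then follow directly from the definitions in Section~\ref{subsec:wel}: a point of $S_Y$ lies in $B(\phi)$ (respectively $G(\phi)$) exactly when its defect is positive (respectively negative), and likewise for $\psi$, so the equality $d_\phi(x) = d_\psi(\pi(x))$ gives $x \in B(\phi) \iff \pi(x) \in B(\psi)$ and $x \in G(\phi) \iff \pi(x) \in G(\psi)$. I do not anticipate a serious obstacle; the only point requiring care is that the decomposition defining $\pi'$ is not unique, so I must be certain that reading $\pi'(\phi(x)) = \psi(\pi(x))$ off the displayed expression for $\phi(x)$ is legitimate. This is precisely what the earlier remark that $\pi'$ is well-defined guarantees, so the argument goes through.
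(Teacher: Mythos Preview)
Your proof is correct and follows essentially the same approach as the paper: both arguments reduce to the one-line computation $d_\phi(x)=\Delta_Y(x)-\Delta_{Y'}(\phi(x))=\Delta^{qr}(\pi(x))-\Delta_{q,r}(\psi(\pi(x)))=d_\psi(\pi(x))$ via the identities $\Delta_Y=\Delta^{qr}\circ\pi$ and $\Delta_{Y'}=\Delta_{q,r}\circ\pi'$ together with the formula~(\ref{eqn:phi}) for $\phi$. Your version is simply more explicit about why $\pi'(\phi(x))=\psi(\pi(x))$ and why the absolute values can be dropped.
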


\begin{proof}[Proof of Claim \ref{cl:bg1}]
We simply calculate the defect of $x$ using Equation~\eqref{eq:stand} to see that
\[
d_\phi(x)=|\Delta_Y(x)| - |\Delta_{Y'}(\phi(x))| = \Delta^{qr}(\pi(x)) - \Delta_{q,r}(\psi(\pi(x)))=d_\psi(\pi(x)).  
\]
\end{proof}
\begin{claim}\label{cl:bg2}
Suppose $y\in Q_Y$ with $y= N_Y-x$ where $x\in S_Y$. Then $d_\phi (y) =d_\psi (\pi (x))$. In particular, $y\in B(\phi)$ if and only if $\pi (x) \in B(\psi)$. Similarly, $y\in G(\phi)$ if and only if $\pi (x) \in G(\psi)$. 
\end{claim}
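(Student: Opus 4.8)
The plan is to reduce the statement about $y\in Q_Y$ to the already-established statement about $x\in S_Y$ (Claim~\ref{cl:bg1}), exploiting the symmetry of the delta functions under the involution $n\mapsto N_Y-n$ recorded in part~\eqref{i:theo3} of Theorem~\ref{theo:mainmore}. Concretely, I would compute $d_\phi(y)$ directly from the definition $d_\phi(y)=|\Delta_Y(y)|-|\Delta_{Y'}(\phi(y))|$, using the extension rule for $\phi$ on $Q_Y$ furnished by Lemma~\ref{lem:tightimpliesrightveering}, namely $\phi(N_Y-x)=N_{Y'}-\phi(x)$.

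\medskip

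First I would apply the symmetry relation to rewrite both terms. Since $y=N_Y-x$ with $x\in S_Y$, part~\eqref{i:theo3} of Theorem~\ref{theo:mainmore} gives $\Delta_Y(y)=\Delta_Y(N_Y-x)=-\Delta_Y(x)$, so $|\Delta_Y(y)|=|\Delta_Y(x)|$. For the target, I would use $\phi(y)=N_{Y'}-\phi(x)$ together with the analogous symmetry on $Y'$, i.e. $\Delta_{Y'}(N_{Y'}-\phi(x))=-\Delta_{Y'}(\phi(x))$, to obtain $|\Delta_{Y'}(\phi(y))|=|\Delta_{Y'}(\phi(x))|$. Subtracting, this yields
\[
d_\phi(y)=|\Delta_Y(x)|-|\Delta_{Y'}(\phi(x))|=d_\phi(x),
\]
and then Claim~\ref{cl:bg1} identifies $d_\phi(x)=d_\psi(\pi(x))$, giving the desired equality $d_\phi(y)=d_\psi(\pi(x))$. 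The statements about membership in $B(\phi)$ and $G(\phi)$ follow immediately, since those sets are defined purely by the sign of the defect.

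\medskip

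The one point requiring a little care—and the place I expect the only genuine obstacle—is justifying that $\phi(x)\in S_{Y'}$ so that the $Y'$-symmetry relation $\Delta_{Y'}(N_{Y'}-\phi(x))=-\Delta_{Y'}(\phi(x))$ is applicable with $\phi(x)$ lying in the range $[0,N_{Y'}]$ where part~\eqref{i:theo3} holds. This is exactly what was verified in the course of proving Lemma~\ref{lem:tightimpliesrightveering}: rigidity of $\phi$ forces $\phi(S_Y)\subseteq S_{Y'}$ and $\phi(Q_Y)\subseteq Q_{Y'}$, so both $\phi(x)$ and $\phi(y)=N_{Y'}-\phi(x)$ lie in $[0,N_{Y'}]$ and the symmetry applies to each. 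With that in hand the proof is a short symmetric mirror of Claim~\ref{cl:bg1}, and no new computation beyond invoking Theorem~\ref{theo:mainmore}\eqref{i:theo3} twice is needed.
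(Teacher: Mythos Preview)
Your proposal is correct and follows essentially the same approach as the paper: both compute $d_\phi(y)$ by invoking the symmetry $\Delta_Y(N_Y-x)=-\Delta_Y(x)$ and $\Delta_{Y'}(N_{Y'}-\phi(x))=-\Delta_{Y'}(\phi(x))$ together with the extension rule $\phi(N_Y-x)=N_{Y'}-\phi(x)$, reducing everything to the $S_Y$ case already handled in Claim~\ref{cl:bg1}. The paper's proof is simply a terser one-line version of your argument, and your extra care in noting that $\phi(x)\in[0,N_{Y'}]$ (so that part~\eqref{i:theo3} of Theorem~\ref{theo:mainmore} applies) is a welcome clarification the paper leaves implicit.
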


\begin{proof}[Proof of Claim \ref{cl:bg2}]
Again we calculate the defect of $y$ using Equation~\eqref{eq:stand} to see that
\[
d_\phi(y)=|\Delta_Y(y)| - |\Delta_{Y'}(\phi(y))| = \Delta^{qr}(\pi(x)) - \Delta_{q,r}(\psi(\pi(x)))=d_\psi(\pi(x)).  
\]
\end{proof}

We are now ready to define the control function $\theta_Y$. First define it on $S_Y\cap B(\phi)$.  Suppose $x\in S_Y \cap B(\phi)$. Then we let
\[
\theta_Y(x):= p_1 \cdots p_l \left [ \theta_{q,r}(\pi (x)) + qr\left (\sum^l_{i=1}  \frac{x_i}{p_i} \right ) \right ],
\] 
where $\theta_{q,r}$ is as defined in Equation (\ref{eqn:thetaqr}). Note that Claim \ref{cl:bg1} implies that the term $\theta_{q,r}(\pi (x))$ above makes sense.  Next define $\theta_Y$ on $Q_Y\cap B(\phi)$.  Suppose $y\in Q_Y\cap B(\phi)$ with $y=N_Y-x$. Then we let
\[
\theta_Y(y):= N_Y- p_1 \cdots p_l \left [ \theta_{q,r}(\pi (x)) + qr\left (\sum^l_{i=1}  \frac{x_i}{p_i} \right ) \right ].
\] 
Note that Claim \ref{cl:bg2} implies that the term $\theta_{q,r}(\pi (x))$ above makes sense.

\vspace{0.2cm}

Having seen that  the map $\theta_Y$ is defined on  $B(\phi)$, let us now show that it has the correct codomain. We must show that $ 0 \leq \theta_Y(b) \leq N_Y$ for all $b \in B(\phi)$. Indeed, since $\theta_{q,r}$ is a control function, $0 \leq \theta_{q,r}(\pi (x))<\pi (x)$ for all $x\in S_Y\cap B(\phi)$.  This implies
\begin{align}
0 & \leq \theta_Y(x) < x < N_Y \; \mathrm{for}\; \mathrm{all} \; x \in S_Y\cap B(\phi),\label{est:a}\\
0 & < y < \theta_Y(y) \leq N_Y \; \mathrm{for}\; \mathrm{all} \; y \in Q_Y\cap B(\phi)\label{est:b}.
\end{align}
\noindent It is now clear that we have a map $\theta_Y:B(\phi)\to X_Y$ such that $ \theta_Y(S_Y\cap B(\phi)) \subseteq S_Y$ and $\theta_Y(Q_Y\cap B(\phi)) \subseteq Q_Y$. 

\vspace{0.2cm}

We start checking that $\theta_Y$ satisfies the properties listed in Definition \ref{d:control}. Since $\theta_{q,r}$ is a control function, Claim \ref{cl:bg1} and Claim \ref{cl:bg2} respectively imply that
\begin{align*}
\theta_Y(S_Y \cap B(\phi)) & \subseteq S_Y \cap G(\phi),\\
\theta_Y(Q_Y \cap B(\phi)) & \subseteq Q_Y \cap G(\phi).
\end{align*} 
\noindent Hence $\theta:B(\phi)\to G(\phi)$ satisfies Property (\ref{def:cont1}) of Definition~\ref{d:control}.  Clearly $\theta_Y$ is injective since $\theta_{q,r}$ is injective. Property (\ref{def:cont2}) was shown in Inequalities (\ref{est:a}) and (\ref{est:b}) above.  Finally Lemma~\ref{lem:goodbad} together with Claim \ref{cl:bg1} and Claim \ref{cl:bg2} imply that $|d_\phi(b)|=|d_\phi(\theta_Y(b))|=1$ for all $b\in B(\phi)$. Hence Property (\ref{def:cont3}) follows.

\vspace{0.2cm}

Therefore, we have shown that the one-to-one semi-immersion $\phi$ admits a control function, and hence it is well-behaved.  The theorem now follows from Corollary~\ref{cor:ranksemi}.

\end{proof}

\section{Botany}\label{s:botany}

In this section we will prove Theorem~\ref{thm:botany} and some other results related to the botany question. 

\begin{proof}[Proof of Theorem \ref{thm:botany}]
Without loss of generality assume the multiplicities of the singular fibers are in the increasing order,
$$1<p_1<p_2<\ldots<p_l.$$

\noindent Therefore $p_j> j$ for all $j=1,\dots,l$. Now 	$n=\rank \HFred (\Sigma (p_1,\ldots,p_l)) \geq 1$ rules out the possibility of $l\leq 2$, since $S^3$ is the only Seifert homology sphere with at most $2$ singular fibers and $\HFred (S^3)=0$. Part (\ref{i:bot1}) is trivial when $l=3$. Suppose  $l\geq 4$. We have $(p_1,p_2,p_3,p_4)\geq (2,3,5,7)$. A direct computation shows $\HFred (\Sigma(2,3,5,7))=13$. By repeated applications of Theorem~\ref{thm:pinch} and Theorem~\ref{thm:fiberbranched}, we have 
\begin{align*}
n = \rank \HFred (\Sigma(p_1,\ldots,p_l)) &\geq \rank \HFred (\Sigma(p_1,\ldots,p_{l-1}p_l))\\
&\geq p_l \rank \HFred (\Sigma(p_1,\ldots,p_{l-1}))\\
&\vdots \\
&\geq p_l \cdots p_5 \rank \HFred (\Sigma(p_1,p_2,p_3,p_4))\\
&\geq \frac{l!}{4!}\rank \HFred (\Sigma(2,3,5,7))\quad \text{(by Theorem \ref{thm:monotonicity})} \\
&> \frac{l!}{2},
\end{align*}
which establishes part (\ref{i:bot1}) of the theorem.

\vspace{0.2cm}

For part (\ref{i:bot2}), suppose to the contrary that $p_l\geq 6n+7$. One can directly verify that the rank of $\HFred (\Sigma(2,3,6n+7))$ is $n+1$. By repeatedly applying Theorem~\ref{thm:pinch} and Theorem~\ref{thm:fiberbranched}, we get
\begin{align*}
n =\rank \HFred (\Sigma(p_1,\ldots,p_l)) &\geq p_1\ldots p_{l-3}  \rank \HFred (\Sigma(p_{l-2},p_{l-1},p_l))\\
& \geq  \rank \HFred (\Sigma(2,3,6n+7)) \quad \text{(by Theorem \ref{thm:monotonicity})}\\
&=n+1,
\end{align*}

\noindent which is a contradiction.
\end{proof} 

 Recall that for a  Spin$^c$ structure $\spinc$ on a rational homology sphere $Y$, the group $HF^+(Y,\spinc)$ comes equipped with a $\mathbb{Q}$-valued grading \cite{OS2}.  The correction term, $d(Y,\spinc)$, is defined to be the minimal grading of an element of $HF^+(Y,\spinc)$ which lies in the image of the obvious map from $HF^\infty(Y,\spinc)$ to $HF^+(Y,\spinc)$.  In the case of a homology sphere, we omit the Spin$^c$ structure from the notation.  Furthermore, in this case, the correction term is always an even integer.

\begin{corollary}\label{cor:finiteness}
For fixed $n \in \mathbb{N}$, there are at most finitely many Seifert homology spheres $Y$ with $\rank \HFred (Y) \leq n$.  Therefore, there are at most finitely many integers $d$ that can occur as the correction term of a Seifert homology sphere $Y$ with $\rank \HFred (Y)= n$.  
\end{corollary}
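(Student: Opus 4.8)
The plan is to derive both assertions from the explicit bounds of Theorem~\ref{thm:botany}, after disposing of the rank-zero case by hand. I would first observe that the second sentence follows formally from the first: a homology sphere carries a single correction term $d(Y) \in 2\mathbb{Z}$, so once we know that only finitely many Seifert homology spheres satisfy $\rank \HFred(Y) = n$, the set of integers occurring as their correction terms is the image of a finite set, hence finite. Thus everything reduces to the finiteness statement in the first sentence.

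For that statement, fix $n$ and let $Y = \Sigma(p_1,\ldots,p_l)$ satisfy $\rank \HFred(Y) = m$ with $1 \leq m \leq n$. Theorem~\ref{thm:botany} then gives $l! < \max\{2m,7\} \leq \max\{2n,7\}$ and $\max_i p_i < 6m+7 \leq 6n+7$, so both the number of singular fibers and all of their multiplicities are bounded in terms of $n$ alone. Since a Seifert homology sphere is determined by its unordered tuple of pairwise relatively prime multiplicities, and there are only finitely many such tuples of bounded length with entries in a bounded range, only finitely many $Y$ satisfy $1 \leq \rank \HFred(Y) \leq n$.

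It remains to treat $m = 0$, where the bounds of Theorem~\ref{thm:botany} do not directly apply, and here I would show that the only Seifert homology spheres with vanishing $\HFred$ are $S^3$ and $\Sigma(2,3,5)$. By Theorem~\ref{theo:mainmore}(\ref{i:theo1}), every other $Y$ has $N_Y \geq 1$, so $Q_Y \neq \emptyset$ and $\kappa_Y \geq 1$. Using Proposition~\ref{prop:rank}(\ref{prop:rankitem2}), $\rank \HFred(\Gamma_{\Delta_Y}) = \kappa_Y + \min \tau_Y$, so it suffices to rule out $\min \tau_Y = -\kappa_Y$. But an equality $\tau_Y(z) = -\kappa_Y$ would force $\sum_{w \in S_Y,\, w < z} \Delta_Y(w) = 0$ and $\sum_{w \in Q_Y,\, w < z} \Delta_Y(w) = -\kappa_Y$ at once; since $\Delta_Y$ is positive on $S_Y$ and $0 = \min S_Y$, the first equality forces $z \leq 0$, while since $\Delta_Y$ is negative on $Q_Y$ and $N_Y = \max Q_Y$, the second forces $z > N_Y \geq 1$, a contradiction. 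Hence $\rank \HFred(Y) \geq 1$ whenever $N_Y \geq 1$, so the rank-zero locus is exactly $\{S^3, \Sigma(2,3,5)\}$; combining this with the previous paragraph finishes the proof.

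The bulk of the argument is bookkeeping on top of Theorem~\ref{thm:botany}, and I expect the main (though minor) obstacle to be the rank-zero computation: one wants to conclude $\HFred \neq 0$ purely from positivity of $N_Y$, via the delta-sequence formula for the rank, rather than appealing to an external classification of Seifert L-spaces.
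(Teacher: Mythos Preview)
Your proof is correct and follows the same route as the paper: invoke Theorem~\ref{thm:botany} to bound both the number of singular fibers and their multiplicities in terms of $n$, then note that only finitely many tuples survive. The paper's own proof is a single sentence to this effect and does not separately treat the rank-zero case; your explicit argument that $N_Y \geq 1$ forces $\rank \HFred(Y) \geq 1$ via Proposition~\ref{prop:rank}(\ref{prop:rankitem2}) is a nice self-contained justification of a point the paper takes for granted (it is implicit in Table~\ref{tab:rank} and in the remark opening Section~\ref{sec:bra}).
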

\begin{proof}
By Theorem~\ref{thm:botany}, $n$ easily provides an upper bound on the number of singular fibers, and their multiplicities for which $\rank \HFred = n$.  
\end{proof}

\begin{corollary}\label{cor:cassonfiniteness}
For a fixed integer $n$, there are at most finitely many Seifert homology spheres $Y$ with $\lambda(Y) = n$, where $\lambda$ denotes the Casson invariant.  
\end{corollary}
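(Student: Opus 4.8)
The plan is to bound $\rank \HFred(Y)$ in terms of $|\lambda(Y)|$ and then invoke Corollary~\ref{cor:finiteness}. The bridge between the two invariants is the theorem of Ozsv\'ath--Szab\'o (in the form later refined by Rustamov) expressing the Casson invariant of an integral homology sphere through Heegaard Floer data: for any integral homology sphere $Y$,
$$\lambda(Y) = \chi(\HFred(Y)) - \tfrac12 d(Y),$$
where $\chi$ denotes the graded Euler characteristic and $d(Y)$ is the correction term discussed above.

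First I would pin down the signs of the two terms on the right-hand side for a Seifert homology sphere $Y=\Sigma(p_1,\dots,p_l)$. By Theorem~\ref{theo:Nemethi} we have $HF^+(-Y)\cong \mathbb{H}(\Gamma_{\tau_Y})$, and by construction every grading on $\mathbb{H}(\Gamma_{\tau_Y})$ is even; hence $\HFred(Y)$ is supported in odd gradings, so that $\chi(\HFred(Y)) = -\rank \HFred(Y)$. Next, since $Y$ bounds the canonical negative definite plumbing, the correction term inequality of Ozsv\'ath--Szab\'o gives $d(Y)\geq 0$. Substituting, one obtains
$$\lambda(Y) = -\rank \HFred(Y) - \tfrac12 d(Y),$$
in which both summands are nonpositive, so $\lambda(Y)\leq 0$ and $\rank \HFred(Y) \leq |\lambda(Y)|$.

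With this inequality in hand the corollary is immediate: fixing $\lambda(Y)=n$ forces $\rank \HFred(Y)\leq |n|$ (and in fact forces $n\leq 0$, so there are no examples when $n>0$), whence Corollary~\ref{cor:finiteness} produces only finitely many Seifert homology spheres meeting this bound.

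The main obstacle is arranging for the two signs to cooperate, namely verifying both that $\HFred(Y)$ sits in a single (odd) parity so that $|\chi(\HFred(Y))| = \rank \HFred(Y)$, and that $d(Y)\geq 0$, so that the two summands reinforce rather than cancel in $|\lambda(Y)|$; had they pulled in opposite directions, a large reduced group could be masked by a large correction term and the bound would collapse. The odd-parity claim follows from Nemethi's even-graded model for $HF^+(-Y)$ together with the behavior of $\HFred$ under orientation reversal, which I would double-check against $\Sigma(2,3,7)$, where $\HFred$ is $\mathbb{Z}$ in degree $-1$, $d=0$, and $\lambda=-1$; the nonnegativity of $d$ is the standard consequence of bounding a negative definite $4$--manifold.
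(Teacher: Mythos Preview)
Your argument is correct and follows essentially the same route as the paper's proof: bound $\rank\HFred(Y)$ by $|\lambda(Y)|$ via the Ozsv\'ath--Szab\'o formula $\lambda(Y)=\chi(\HFred(Y))-\tfrac12 d(Y)$, using that $\HFred$ is supported in a single parity and that $d$ has a definite sign, and then invoke Corollary~\ref{cor:finiteness}. The only cosmetic difference is orientation: the paper reorients so that $Y$ bounds a \emph{positive}-definite plumbing, giving even gradings, $d(Y)\leq 0$, and hence $\lambda(Y)\geq \rank\HFred(Y)$, whereas you keep the standard orientation (negative-definite bounding, odd gradings, $d(Y)\geq 0$) and obtain $-\lambda(Y)\geq \rank\HFred(Y)$; these are the same inequality read through $\lambda(-Y)=-\lambda(Y)$.
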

\begin{proof}
We apply \cite[Theorem 1.3]{OS2} relating $\HFred$, the correction term, and the Casson invariant: 
\[
\lambda(Y) + \frac{d(Y)}{2} = \chi(\HFred(Y)).
\]
After a possible change in orientation, $Y$ bounds a positive-definite four-manifold, arising from a plumbing.  It is shown in \cite{OS1} that $HF^+(Y)$ is supported in even gradings.  Therefore, $\chi(\HFred(Y)) = \rank \HFred(Y)$.  Furthermore, in this case, $d(Y) \leq 0$.  Therefore, $\lambda(Y) \geq \rank \HFred(Y)$.  Corollary~\ref{cor:finiteness} now gives the proof.  
\end{proof}

\subsection{Botany of Seifert homology spheres with rank at most $12$} Here we list all the Seifert homology spheres whose rank of the reduced Heegaard Floer homology is at most 12. By Theorem~\ref{thm:botany}, these Seifert homology spheres necessarily have $3$ singular fibers excepting the $3$--sphere. The same result says that the maximum multiplicity of these singular fibers is no more than $79$. We looked at all the triples  of pairwise relatively prime integers $(p_1,p_2,p_3)$ with $1<p_1<p_2<p_3<79$ and calculated the rank of  reduced Heegaard Floer homology of the corresponding Seifert homology spheres. The triples listed in Table~\ref{tab:rank} are the only ones with rank no more than $12$.

\tiny

\begin{table}[h]
\begin{tabular}{|l|l|}
\hline
Rank & Triple\\
\hline
$n=0$ & $S^3, [2,3,5]$ \\
\hline
$n=1$&  $ [ 2, 3, 11 ], [ 2, 3, 7 ]$\\
\hline
$n=2$& $ [ 3, 4, 5 ], [ 2, 3, 17 ], [ 3, 4, 7 ], [ 2, 3, 13 ], [ 2, 5, 9 ],  [ 2, 5, 7 ]$\\
\hline
$n=3$		 & $  [ 2, 5, 13 ], [ 2, 3, 23 ],  [ 2, 7, 9 ],  [ 2, 5, 11 ], [ 2, 3, 19 ], [ 3, 5, 7 ] $\\
\hline
	
$n=4$&$  [ 3, 4, 11 ],
    [ 2, 7, 13 ],
    [ 3, 5, 8 ],
    [ 2, 3, 25 ],
    [ 2, 7, 11 ],
    [ 2, 3, 29 ]$\\
	
\hline
		
$n=5$&$ [ 4, 5, 7 ],
    [ 3, 4, 13 ],
    [ 3, 5, 11 ],
    [ 2, 3, 35 ],
    [ 2, 5, 17 ],
    [ 2, 3, 31 ],
    [ 2, 5, 19 ]$\\
	
\hline
	
$n=6$&$ [ 2, 7, 15 ],
    [ 2, 5, 21 ],
    [ 2, 5, 23 ],
    [ 4, 5, 9 ],
    [ 3, 7, 8 ],
    [ 3, 5, 13 ],
    [ 2, 9, 11 ],
    [ 3, 5, 14 ],
    [ 2, 3, 41 ],
    [ 2, 3, 37 ]$\\ 
	
\hline
	
$n=7$&$    [ 2, 3, 47 ],
    [ 3, 4, 17 ],
    [ 3, 7, 10 ],
    [ 3, 4, 19 ],
    [ 2, 7, 17 ],
    [ 2, 3, 43 ],
    [ 2, 9, 13 ]$\\ 
	
\hline

$n=8$&$   [ 2, 3, 49 ],
    [ 3, 7, 11 ],
    [ 2, 7, 19 ],
    [ 2, 5, 29 ],
    [ 2, 3, 53 ],
    [ 2, 11, 13 ],
    [ 4, 5, 11 ],
    [ 2, 9, 17 ],
    [ 3, 5, 17 ],
    [ 2, 5, 27 ],
    [ 5, 6, 7 ],
    [ 3, 5, 16 ],
    [ 4, 7, 9 ]$\\ 
	
\hline

$n=9$&$     [ 4, 5, 13 ],
    [ 2, 5, 33 ],
    [ 2, 5, 31 ],
    [ 2, 3, 59 ],
    [ 2, 3, 55 ],
    [ 3, 8, 11 ],
    [ 3, 7, 13 ],
    [ 3, 4, 23 ],
    [ 2, 7, 23 ] $ \\
	
\hline

$n=10$&$     [ 2, 3, 61 ],
    [ 3, 4, 25 ],
    [ 2, 11, 15 ],
    [ 2, 3, 65 ],
    [ 5, 7, 8 ],
    [ 2, 9, 19 ],
    [ 2, 7, 27 ],
    [ 3, 5, 19 ],
    [ 2, 7, 25 ],
    [ 2, 11, 17 ] $ \\
	
\hline
$n=11$&$       [ 2, 3, 71 ],
    [ 3, 5, 22 ],
    [ 2, 3, 67 ],
    [ 2, 5, 37 ],
    [ 3, 8, 13 ],
    [ 2, 5, 39 ],
    [ 4, 7, 11 ] $ \\
	
\hline

$n=12$&$    [ 5, 7, 9 ],
    [ 4, 5, 19 ],
    [ 4, 5, 17 ],
    [ 2, 11, 19 ],
    [ 2, 11, 21 ],
    [ 3, 7, 17 ],
    [ 3, 7, 16 ],
    [ 2, 5, 41 ],
    [ 2, 9, 23 ],$\\
		&  $[ 2, 5, 43 ],
    [ 2, 7, 29 ],
    [ 2, 3, 73 ],
    [ 3, 10, 11 ],
    [ 3, 4, 29 ],
    [ 2, 3, 77 ],
    [ 5, 6, 11 ],
    [ 3, 4, 31 ],
    [ 2, 13, 15 ],
    [ 3, 5, 23 ] $ \\
	
\hline

\end{tabular}
\caption{Seifert homology spheres with rank of reduced Heegaard Floer homology less than or equal to $12$.}
\label{tab:rank}
\end{table}

\normalsize

\section{Maps between Seifert homology spheres}\label{sec:nonzerodegree}
It is our goal in this section to prove Theorem~\ref{thm:nonzeroexist} and therefore we must study maps $f:Y' \to Y$ between Seifert homology spheres. First of all, it is clear that Theorem~\ref{thm:nonzeroexist} trivially holds if the degree of $f$ is zero.  Thus, for the rest of this section, we only study non-zero degree maps. Furthermore we always choose orientations of $Y$ and $Y'$ such that $\deg f>0$.  We will determine which pairs of Seifert  homology spheres admit non-zero degree maps between them and what their possible degrees are.  We will then use this in conjunction with the inequalities in Theorems~\ref{thm:fiberbranched} and \ref{thm:pinch} to prove Theorem~\ref{thm:nonzeroexist}.  It turns out that the problem of the existence of non-zero degree maps between Seifert fibered spaces with infinite fundamental groups is settled \cite{Hu, Ro2, Ro1}, and we will specialize this to the case of homology spheres.  

\vspace{0.2cm}
 Before beginning our discussion we set up some conventions which will be used throughout this section.   We shall introduce and use unnormalized Seifert invariants (see for example \cite[Sections 2.14 and 2.15]{NN}).  Let $Y$ be a Seifert homology sphere. Let $(e_0,(p_1',p_1),\dots,(p'_t,p_t))$ be its normalized Seifert invariants. We define the \emph{Euler number} of $Y$ to be
$$e(Y):=e_0+\sum_{i=1}^t \frac{p'_i}{p_i}.$$
The \emph{unnormalized Seifert invariants} of $Y$ is a set $\displaystyle \left \{\frac{\omega_1}{\alpha_1},\dots,\frac{\omega_k}{\alpha_k} \right \}$ with $k\geq t$ satisfying

\begin{enumerate}
\item $\alpha_i=p_i$ for all $i=1,\dots,t$,
\item  $\omega_i\equiv-p'_i \mod  p_i$ for all $i=1,\dots,t$,
\item $\gcd(\omega_i,\alpha_i)=1$ for all $i=1,\dots,t$,
\item $\displaystyle e(Y)=-\sum_{i=1}^k \frac{\omega_i}{\alpha_i}$.
\end{enumerate}
Note that the unnormalized Seifert invariants are not unique, but they uniquely determine the normalized Seifert invariants and hence the Seifert homology sphere $Y$.  When referring to a Seifert homology sphere $\Sigma(p_1,\ldots,p_t)$, we will also allow $p_i = 1$, unless stated otherwise.

\vspace{0.2cm}
		
We begin with the types of maps that can exist between Seifert fibered spaces.

\begin{theorem}[Rong, Theorem 3.2 of \cite{Ro2}]\label{thm:rongnonzero}
Suppose that $f:Y' \to Y$ is a map between $P^2$-irreducible Seifert fibered spaces with infinite fundamental group.  Then, $f$ is homotopic to a map $p \circ g \circ \pi$, where $\pi$ is a degree one map between Seifert fibered spaces, $g$ is a fiber-preserving branched cover branched along fibers, and $p$ is a covering.  Furthermore, if $Y$ is not a Euclidean manifold, we can choose $p = id$.  
\end{theorem}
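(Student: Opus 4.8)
The plan is to work at the level of fundamental groups. A $P^2$-irreducible Seifert fibered space with infinite fundamental group is aspherical (its universal cover is $\mathbb{R}^3$), so both $Y$ and $Y'$ are $K(\pi,1)$ spaces, and homotopy classes of maps $Y' \to Y$ are in bijection with conjugacy classes of homomorphisms $f_* : \pi_1(Y') \to \pi_1(Y)$. First I would reduce the theorem to a statement about such homomorphisms and then realize the resulting algebraic factorization by honest geometric maps homotopic to $f$.

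The central tool is the Seifert structure visible on $\pi_1$. Each Seifert fibered space $Y$ with orientable base orbifold $B$ sits in a central extension
\[
1 \to \langle h \rangle \to \pi_1(Y) \to \pi_1^{\mathrm{orb}}(B) \to 1,
\]
where $h$ is the class of a regular fiber. The key algebraic input is that the fiber subgroup $\langle h \rangle$ is intrinsic to the group (it is the unique maximal normal infinite cyclic subgroup, coinciding with the center when $\pi_1^{\mathrm{orb}}(B)$ is non-elementary), so $f_*$ must carry the fiber class $h'$ of $Y'$ to a power $h^m$. This integer $m$ records the \emph{vertical} part of the map; it forces $f$ to be homotopic to a fiber-preserving map, and the branched-covering factor $g$ is precisely the one raising the fiber degree by $m$ while branching over the exceptional fibers whose multiplicities require it.

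Quotienting by the fiber subgroups, $f_*$ descends to a homomorphism $\overline{f}_* : \pi_1^{\mathrm{orb}}(B') \to \pi_1^{\mathrm{orb}}(B)$ of $2$-orbifold groups, and here I would invoke the classification of such homomorphisms. When the base is a hyperbolic $2$-orbifold, $\overline{f}_*$ is induced by an orbifold branched covering composed with a pinch that collapses sub-orbifolds; these factors lift to the degree-one vertical pinch $\pi$ and, together with $m$, to $g$. The covering factor $p$ arises only from residual index of $f_*(\pi_1 Y')$ in $\pi_1(Y)$ that cannot be absorbed into $\pi$ and $g$; in the hyperbolic-base case the rigidity of hyperbolic $2$-orbifold maps lets one absorb this index into the base map, so $p = \mathrm{id}$, whereas the flat (Euclidean) base case genuinely needs the covering because the center can be larger and the base maps are not rigid.

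The hard part will be the geometric realization step, namely translating the abstract factorization of $f_*$ into actual fiber-preserving maps of Seifert spaces realizing $f$ up to homotopy. This requires the deformation theory of maps between aspherical Seifert manifolds, careful bookkeeping of the Seifert invariants so that $g$ carries the correct branch data over the exceptional fibers, and a separate, more delicate treatment of the Euclidean geometries, which is exactly where the covering factor $p$ cannot be eliminated and the conclusion $p = \mathrm{id}$ fails.
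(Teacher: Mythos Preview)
The paper does not prove this statement at all: Theorem~\ref{thm:rongnonzero} is quoted verbatim from Rong's paper \cite{Ro2} and used as a black box in Section~\ref{sec:nonzerodegree}. There is therefore no ``paper's own proof'' to compare your proposal against.

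Your sketch is a plausible outline of how Rong's argument proceeds (asphericity, characteristic fiber subgroup, descent to the orbifold fundamental groups, and geometric realization), and the intuition about where the Euclidean case requires the extra covering factor is correct in spirit. But as written it is only a high-level plan: the substantive content lies precisely in the steps you flag as ``the hard part,'' namely the classification of homomorphisms between $2$-orbifold groups and the geometric realization of the algebraic factorization by honest fiber-preserving maps with the correct branch data. None of those steps is carried out here, so this is a reasonable roadmap but not a proof. If your goal is to supply a proof for this paper, the appropriate move is simply to cite \cite{Ro2}, as the authors do.
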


Rather than define $P^2$-irreducible manifolds, we point out that Seifert homology spheres are always $P^2$-irreducible.  Theorem~\ref{thm:rongnonzero} is actually more specific, stating that the map $\pi$ is a special type of degree one map, called a {\em vertical pinch} which is defined below.  Since Seifert homology spheres never have Euclidean geometry, we may further assume $f$ is homotopic to $g \circ \pi$ as above.  

\vspace{0.2cm}

The following is a standard fact about non-zero degree maps, but we give a proof for completeness.
\begin{proposition}\label{fact:nonzero}
If $f:Y' \to Y$ is a non-zero degree map between closed, connected, orientable three-manifolds, the index of $f_*(\pi_1(Y'))$ as a subgroup of $\pi_1(Y)$ divides the degree of $f$, and thus is finite.  In particular, if $Y'$ is an integer homology sphere and the degree of $f$ is one, then $Y$ is an integer homology sphere as well.
\end{proposition}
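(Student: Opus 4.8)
The plan is to factor $f$ through the covering space of $Y$ determined by the image subgroup and then compare fundamental classes. First I would set $H := f_*(\pi_1(Y')) \leq \pi_1(Y)$ and let $p : \tilde{Y} \to Y$ be the connected covering corresponding to $H$, whose number of sheets equals the index $m := [\pi_1(Y) : H]$ (a priori possibly infinite). Since $Y$ is orientable, so is $\tilde{Y}$, and since $p_*(\pi_1(\tilde Y)) = H = f_*(\pi_1(Y'))$, the lifting criterion produces a map $\tilde{f} : Y' \to \tilde{Y}$ with $p \circ \tilde{f} = f$. Everything then reduces to understanding $m$.

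Next I would rule out $m = \infty$. If the index were infinite, $\tilde{Y}$ would be a connected, non-compact $3$-manifold, so $H_3(\tilde{Y};\mathbb{Z}) = 0$; consequently $\tilde f_*$ vanishes on $H_3$ and $f_* = p_* \circ \tilde f_* = 0$ on $H_3$, contradicting $\deg f \neq 0$. Hence $m$ is finite and $\tilde{Y}$ is a closed oriented $3$-manifold. For the divisibility statement I would use that an $m$-sheeted covering satisfies $p_*[\tilde{Y}] = m[Y]$ and that $\tilde f_*[Y'] = (\deg \tilde f)[\tilde{Y}]$ for some integer $\deg \tilde f$. Combining these, $(\deg f)[Y] = f_*[Y'] = p_* \tilde f_*[Y'] = m (\deg \tilde f)[Y]$, so $\deg f = m \cdot \deg \tilde f$ and therefore $m \mid \deg f$; in particular $m$ is finite, as claimed.

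For the final assertion I would specialize to $\deg f = 1$, which forces $m \mid 1$, hence $m = 1$ and $f_* : \pi_1(Y') \to \pi_1(Y)$ is surjective. Abelianizing shows $H_1(Y';\mathbb{Z}) \to H_1(Y;\mathbb{Z})$ is surjective; as $Y'$ is an integer homology sphere we have $H_1(Y';\mathbb{Z}) = 0$, whence $H_1(Y;\mathbb{Z}) = 0$. A standard application of the universal coefficient theorem and Poincar\'e duality then upgrades $H_1(Y) = 0$ to the vanishing of all reduced homology of the closed oriented $3$-manifold $Y$ except in degrees $0$ and $3$, i.e.\ $Y$ is an integer homology sphere.

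The main obstacle I anticipate is the infinite-index case: the whole argument hinges on the vanishing of $H_3$ for a non-compact $3$-manifold, together with correctly identifying the number of sheets of the covering with the index and with the multiplicative behaviour of the degree under composition with a covering. Once the factorization $f = p \circ \tilde f$ is in place these ingredients are routine, but care is needed to keep the fundamental classes and their integer coefficients straight throughout.
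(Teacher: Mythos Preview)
Your argument is correct and follows essentially the same route as the paper: factor $f$ through the covering $p:\tilde Y\to Y$ corresponding to $f_*(\pi_1(Y'))$, use $H_3(\tilde Y)=0$ in the non-compact case to rule out infinite index, and then use multiplicativity of degree to obtain the divisibility; the degree-one conclusion via surjectivity on $\pi_1$ and abelianization is identical. Your write-up is slightly more explicit about fundamental classes and about invoking Poincar\'e duality/UCT at the end, but there is no substantive difference in strategy.
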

\begin{proof}
Let $\widetilde{\pi}:\widetilde{Y}\to Y$ be the covering corresponding to the subgroup $f_*(\pi_1(Y'))$. Then we can lift $f$ to a map $\widetilde{f}:Y'\to \widetilde{Y}$ such that $f=\widetilde{\pi}\circ \widetilde{f}$. If the index of $f_*(\pi_1(Y'))$ is not finite then $\widetilde{Y}$ is not compact hence $H_3(\widetilde{Y})$ is trivial. This implies that $f:H_3(Y') \to H_3(Y)$ factors through the trivial group. Therefore, $\deg f=0$. If the index of $f_*(\pi_1(Y'))$ is finite then it is equal to $\deg \widetilde{\pi}$ and we have $\deg f=\deg \widetilde{\pi} \deg \widetilde{f}$. In particular, the index divides $\deg f$.  In the case that $\deg f =1$, we see that $f_*:\pi_1(Y') \to \pi_1(Y)$ is surjective.  Abelianizing this map shows that $H_1(Y')$ surjects onto $H_1(Y)$ if $\deg f=1$. Hence, $Y$ is an integral homology sphere if $Y'$ is.

\end{proof}

We also note that the only Seifert homology spheres with finite fundamental group are $S^3$ and $\Sigma(2,3,5)$.  Since $\HFred(S^3) = \HFred(\Sigma(2,3,5)) = 0$, Theorem~\ref{thm:nonzeroexist} is immediately satisfied if $\pi_1(Y)$ is finite.  Thus, it suffices to consider the case that $\pi_1(Y)$ is infinite.  By Proposition~\ref{fact:nonzero}, we may assume that $\pi_1(Y')$ is also infinite.  

\vspace{0.2cm}

Therefore, given a non-zero degree map, $f$, between Seifert homology spheres with infinite fundamental group, we may factor $f \simeq g \circ \pi$ as in Theorem~\ref{thm:rongnonzero} where the codomain of $\pi$/domain of $g$ is also a Seifert homology sphere with infinite fundamental group.  In order to prove Theorem~\ref{thm:nonzeroexist}, we therefore analyze the pairs of Seifert homology spheres with infinite fundamental group which admit non-zero degree maps between them via the next two propositions.  We will use the abbreviation ISHS for a Seifert homology sphere with infinite fundamental group. 

\begin{proposition}\label{prop:classificationpinch}
Let $\pi:Y' \to Y$ be a degree one map between ISHS's.  Then $Y$ can be obtained from $Y'$ by a sequence of moves (up to permutation of the multiplicities of the singular fibers) of the form 
\[\Sigma(p_1,\ldots,p_l) \to \Sigma(p_1,\ldots,p_k,p_{k+1} \cdots p_l).\]
\end{proposition}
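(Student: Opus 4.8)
The plan is to invoke Rong's structural theorem to reduce to the case of a vertical pinch, and then to read off the effect of a vertical pinch on the multiplicities directly from the base orbifold, using the pairwise coprimality of the multiplicities (equivalently, the homology sphere hypothesis) to merge coprime cone points into a single cone point whose order is their product.

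First I would dispose of the extraneous factors in Rong's factorization. By Theorem~\ref{thm:rongnonzero} (with $p = \mathrm{id}$, since an ISHS is never Euclidean) we may write $\pi \simeq g \circ \pi_0$, where $\pi_0$ is a vertical pinch and $g$ is a fiber-preserving branched cover branched along fibers. Since degree is multiplicative under composition, $\deg \pi_0 = 1$, and $\deg g \geq 1$, the hypothesis $\deg \pi = 1$ forces $\deg g = 1$; but a degree one branched cover is a homeomorphism. Hence $\pi$ is homotopic to a vertical pinch, and it remains to identify the target of a vertical pinch out of $Y' = \Sigma(p_1,\ldots,p_l)$.

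Next I would analyze the pinch on the base orbifold. A vertical pinch covers a degree one pinch of $S^2(p_1,\ldots,p_l)$ obtained by choosing disjoint embedded disks $D_1,\ldots,D_s$, each enclosing some of the cone points, and collapsing each $D_j$ to a single point. If $D_j$ encloses the cone points of orders $p_{i_1},\ldots,p_{i_r}$, then $\partial D_j$ represents the product $x_{i_1}\cdots x_{i_r}$ in the orbifold fundamental group $\langle x_1,\ldots,x_l \mid x_i^{p_i} = 1,\ x_1\cdots x_l = 1\rangle$, and the collapsed point becomes a cone point whose order is the order of this element. Because $Y'$ is a homology sphere the $p_i$ are pairwise relatively prime, so by the Chinese Remainder Theorem $\lcm(p_{i_1},\ldots,p_{i_r}) = p_{i_1}\cdots p_{i_r}$, and the new cone point has order exactly $p_{i_1}\cdots p_{i_r}$. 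I would then carry out the Seifert invariant bookkeeping on the fiber side, checking against the defining relation~(\ref{e:seifert}) and the Euler number, to confirm that the pinched space is precisely the Seifert homology sphere $Y$ whose singular fibers carry the merged multiplicities.

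Collapsing all $s$ disks at once thus realizes the partition of $\{p_1,\ldots,p_l\}$ into the blocks cut out by the disks and exhibits $Y$ as $\Sigma$ of the block products; since an arbitrary partition is achieved by collapsing one disk at a time, and a single collapse is, up to permuting the multiplicities, exactly a move $\Sigma(p_1,\ldots,p_l)\to\Sigma(p_1,\ldots,p_k,p_{k+1}\cdots p_l)$, iterating yields the desired sequence of moves. The main obstacle is the third step: making Rong's vertical pinch explicit enough to compute the Seifert invariants of the target, and confirming via coprimality that a disk enclosing cone points of orders $p_{i_1},\ldots,p_{i_r}$ collapses to a singular fiber of multiplicity exactly $p_{i_1}\cdots p_{i_r}$, with the orientation and Euler number conventions arranged so that the target is genuinely a Seifert homology sphere rather than some other Seifert space.
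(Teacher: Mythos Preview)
Your approach is correct and runs parallel to the paper's, with one substantive difference in how you identify the target of a vertical pinch.

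Your reduction step is a valid variant of the paper's: the paper invokes Rong's Corollary~3.3 (Theorem~\ref{the:rorong}) directly to say that any degree one map between such spaces is a composition of vertical pinches, whereas you go through the factorization of Theorem~\ref{thm:rongnonzero} and kill the branched-cover factor by the degree argument (a degree-one branched cover is a homeomorphism). Both routes arrive at the same place.

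The difference is in the analysis of the pinch itself. The paper does not redo the geometry; it quotes \cite[Theorem~3.2]{Ro1}, which says that if a composition of vertical pinches takes $Y'$ with invariants $\{q'_i/q_i\}$ to $Y$, then $Y$ admits unnormalized invariants $\{p'_j/p_j\}$ and a partition $\{1,\ldots,k\}=\bigcup_j I_j$ with $\sum_{n\in I_j} q'_n/q_n = p'_j/p_j$ and $\lcm_{n\in I_j}\{q_n\} = p_j$. Pairwise coprimality of the $q_n$ then turns the $\lcm$ into the product in one line. Your route---reading off the new multiplicity from the base orbifold and then doing the Seifert-invariant bookkeeping---reaches the same conclusion but is more hands-on. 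One caution: the orbifold-fundamental-group heuristic is misleading as stated, since the element $x_{i_1}\cdots x_{i_r}$ in $\pi_1^{\mathrm{orb}}(S')$ typically has \emph{infinite} order (the boundary of a subdisk in a hyperbolic $2$-orbifold is represented by a hyperbolic isometry), so the new cone-point order is not the order of that element in the original group. The correct computation is exactly the Seifert-invariant one you already defer to, and carrying it out recovers precisely Rong's $\lcm$ condition; you should drop the group-theoretic step and go straight to the invariants (or to the citation).
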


\begin{proposition}\label{prop:classificationbranched}
If $f:Y' \to Y$ is a fiber-preserving branched cover between ISHS's, then $Y'$ admits a map to $Y$ with the same degree as $f$ which is obtained by a composition (up to permutation of the multiplicities of the singular fibers) of branched covers of the form: 
\begin{itemize}
\item $\phi: \Sigma(p_1,\ldots,n p_l) \to \Sigma(p_1,\ldots,p_l)$ or
\item $\psi: \Sigma(p_1,\ldots,p_l,n) \to \Sigma(p_1,\ldots,p_l)$, 
\end{itemize}
where $\phi$ (respectively $\psi$) is the $n$-fold cyclic branched cover, branched over the singular fiber of order $p_l$ (respectively a regular fiber).  
\end{proposition}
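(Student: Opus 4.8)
The plan is to analyze a fiber-preserving branched cover $f:Y' \to Y$ through the map it induces on the base $2$-orbifolds together with its behavior along the fibers, and then to extract from the homology-sphere condition that the base map must be trivial. Write $Y \to \mathcal{O}$ and $Y' \to \mathcal{O}'$ for the Seifert fibrations, where $\mathcal{O} = S^2(p_1,\dots,p_l)$ and $\mathcal{O}' = S^2(q_1,\dots,q_k)$ have underlying surface $S^2$ and pairwise relatively prime cone orders, since $Y$ and $Y'$ are homology spheres. As $f$ is fiber-preserving it descends to a branched covering $\bar f : S^2 \to S^2$ of the underlying surfaces, of some degree $m$, and along a regular fiber $f$ restricts to a circle covering of some degree $n$; the branch points of $\bar f$ are exactly the images of the fibers along which $f$ is branched, and $\deg f = mn$. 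Using the standard local models for fiber-preserving branched covers of fibered solid tori \cite{NN}, each point $x \in \mathcal{O}$ carrying a fiber of order $p$ (with $p = 1$ for a regular fiber) together with each preimage $y$ of $x$ under $\bar f$, with ramification index $e$, determines the order of the corresponding fiber of $Y'$ over $y$; in particular one reads off the multiplicativity of Euler numbers $e(Y') = \tfrac{m}{n}\, e(Y)$, where $e$ is as defined above.

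The first, and main, step is to show that $m = 1$, i.e.\ that $\bar f$ is a homeomorphism. I would apply Riemann--Hurwitz to $\bar f : S^2 \to S^2$, using that the cover also has genus $0$, to constrain the number of preimages of the cone points and their ramification indices. The preimages of the cone points of $\mathcal{O}$ become the cone points of $\mathcal{O}'$, whose orders must be pairwise relatively prime and must realize the vanishing orbifold first homology forced by $Y'$ being a homology sphere. Concretely I would argue that a degree-$m$ base branched covering with $m > 1$ produces either higher genus upstairs or two cone points of $\mathcal{O}'$ sharing a common factor, either of which contradicts that $Y'$ is a Seifert homology sphere. This combinatorial classification of genus-$0$ orbifold branched covers subject to the pairwise-coprimality constraint is the delicate point of the argument (and is exactly the content supplied by the MathOverflow discussion with Agol acknowledged above); I expect it to be the main obstacle, since the numerical bookkeeping in the remaining steps is routine.

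Granting $m = 1$, the map $\bar f$ is a homeomorphism of $S^2$, so all branching of $f$ is vertical: each fiber of $Y$ (including regular fibers, recorded as multiplicity one) has a unique corresponding fiber of $Y'$, and the local models show that the order of each such fiber of $Y'$ is $n_i$ times the order of the corresponding fiber of $Y$, for some integer $n_i \geq 1$. The homology-sphere condition on $Y'$ forces the resulting orders, hence the factors $n_i > 1$, to be pairwise relatively prime; comparing Euler numbers via $e(Y') = \tfrac{m}{n}\, e(Y)$ with $m = 1$ then gives $\prod_i n_i = n$, so that $\deg f = mn = \prod_i n_i$. I would then build the required map $Y' \to Y$ by branching along these fibers one at a time: a singular fiber of order $p$ with $n_i > 1$ is produced by a move of type $\phi$, raising its order to $n_i p$, while a new singular fiber of order $n_i$ over a regular point is produced by a move of type $\psi$. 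Comparing unnormalized Seifert invariants confirms that this composition has total space exactly $Y'$, and since a move of index $n_i$ has degree $n_i$, the composition has degree $\prod_i n_i = \deg f$. This exhibits the desired map $Y' \to Y$, of the same degree as $f$, as a composition of the two elementary branched covers, completing the proof.
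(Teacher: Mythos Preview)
Your outline is correct and follows essentially the same route as the paper. The paper isolates your ``main step'' $m=1$ as a separate proposition (Proposition~\ref{prop:degreeisfiberdegree}) and proves it by exactly the Riemann--Hurwitz/coprimality analysis you describe; once that is in hand, the paper cites \cite[Theorem~2.2]{Hu} for how unnormalized Seifert invariants transform under a fiber-preserving branched cover (playing the role of your ``local models'') and \cite[Theorem~1.2]{NeumannRaymond} for the Euler-number relation $e(Y')=\tfrac{d}{d_{\mathrm{fib}}^{2}}e(Y)$, then reads off $d=\prod_i d_i$ and the factorization into elementary moves, just as you do.
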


Before giving the proofs, we will see how these two propositions lead to a proof of Theorem~\ref{thm:nonzeroexist}.  
\begin{proof}[Proof of Theorem~\ref{thm:nonzeroexist}]
As discussed, we only need to consider the case of a non-zero degree map $f:Y' \to Y$ between ISHS's.  We therefore factor $f$ as $g \circ \pi$ as in Theorem~\ref{thm:rongnonzero}.  Since degree is multiplicative under composition, we can prove Inequality \eqref{eqn:rankinequality} for $\pi$ and $g$ separately.  The inequality for $\pi$ follows from Proposition~\ref{prop:classificationpinch} and Theorem~\ref{thm:pinch}.  On the other hand, for $g$, we observe that by combining Theorems~\ref{thm:fiberbranched} and ~\ref{thm:pinch}, we obtain 
\[ n \rank \HFred(\Sigma(p_1,\ldots,p_l)) \leq \rank \HFred(\Sigma(p_1,\ldots,np_l)) \leq \rank \HFred(\Sigma(p_1,\ldots,p_l,n)) . \]
The result now follows from Proposition~\ref{prop:classificationbranched}, since an $n$-fold branched covering is a degree $n$ map.  
\end{proof}

The rest of this section is devoted to the proofs of Proposition~\ref{prop:classificationpinch} and Proposition~\ref{prop:classificationbranched}, which will thus complete the proof of Theorem~\ref{thm:nonzeroexist}.

\vspace{0.2cm}

In order to prove Proposition~\ref{prop:classificationpinch}, we recall a special kind of degree one map.  We begin by fixing a Seifert fibered space $M$ with a separating torus $T$ which is vertical (i.e. $T$ is foliated by fibers).  Decompose $M$ along $T$ into two components, $M_1$ and $M_2$.  Furthermore, suppose that there exists an essential simple closed curve on $T$ which bounds a 2-sided surface in $M_2$.  A degree one map $f$ from $M$ to $M_1 \cup_T D^2 \times S^1$ is a {\em vertical pinch} if $f|_{M_1}$ is the identity and $f$ maps $M_2$ onto $D^2 \times S^1$.  It is straightforward to check that there exists a vertical pinch $\Sigma(p_1,\ldots,p_l,q,r) \to \Sigma(p_1,\ldots,p_l,qr)$ (in this case $M_1 = D^2(p_1,\ldots,p_l)$ and $M_2 = D^2(q,r)$).  With this definition, we are now able to describe the degree one maps that appear in Theorem~\ref{thm:rongnonzero}.   

\begin{theorem}[Rong, Corollary 3.3 of \cite{Ro2}]\label{the:rorong}
Let $f:Y' \to Y$ be a degree one map between closed, $P^2$-irreducible Seifert fibered spaces with infinite fundamental group.  Then, $f$ is a composition of vertical pinches.  
\end{theorem}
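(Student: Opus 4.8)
The statement is quoted from Rong \cite{Ro2}, so the most direct route is simply to cite Corollary 3.3 there; what follows is a sketch of the geometric content I would otherwise have to supply. The starting point is that a degree one map is $\pi_1$-surjective, which is precisely Proposition~\ref{fact:nonzero}. Since $Y$ and $Y'$ are $P^2$-irreducible Seifert fibered spaces with infinite fundamental group, both are aspherical, and so $f$ is determined up to homotopy by the induced surjection $f_* : \pi_1(Y') \to \pi_1(Y)$. The problem is thereby reduced to a group-theoretic one: understand the surjections between such fundamental groups and identify which homotopy classes they represent.

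The plan is then to exploit the canonical central extension
\[
1 \to \mathbb{Z} \to \pi_1(Y) \to \pi_1^{\mathrm{orb}}(B) \to 1,
\]
where $B$ is the base orbifold, together with its analogue for $Y'$. When the base is hyperbolic the fiber subgroup $\mathbb{Z}$ is the center of $\pi_1(Y)$ and is therefore characteristic; consequently $f_*$ carries the fiber class of $Y'$ into that of $Y$ and descends to a surjection $\pi_1^{\mathrm{orb}}(B') \to \pi_1^{\mathrm{orb}}(B)$ of hyperbolic $2$-orbifold groups. I would then classify these surjections: the key observation is that a surjection realized by a \emph{degree one} map must collapse a subsurface of $B'$ carrying a collection of cone points onto a single disk with one cone point, which at the level of orbifold groups is the abelianization-type quotient $\mathbb{Z}/q * \mathbb{Z}/r \twoheadrightarrow \mathbb{Z}/qr$ (using $\gcd(q,r) = 1$). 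Lifting each such collapse to the total space produces exactly a vertical pinch, and composing them expresses $f$ as a composition of vertical pinches.

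The hard part is the geometric realization and classification step. One must show that the abstract surjection can be homotoped to a map that is fiber-preserving away from a union of disjoint vertical tori, and that each Seifert piece collapsed by the map is of the simple form $D^2(q,r) \to D^2 \times S^1$ rather than something more elaborate. This rests on the normal-form theory for vertical (incompressible) surfaces and tori in Seifert spaces and on Rong's detailed pinching analysis; it is precisely the technical heart of \cite{Ro2}, and for the purposes of the present paper it is cleanest to invoke that reference rather than to reprove it.
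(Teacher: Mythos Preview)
The paper does not prove this theorem at all; it is stated with the attribution ``Rong, Corollary 3.3 of \cite{Ro2}'' and used as a black box in the proof of Proposition~\ref{prop:classificationpinch}. Your opening sentence---that the most direct route is simply to cite Corollary 3.3 of \cite{Ro2}---is therefore exactly what the paper does, and nothing more is required.

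The sketch you append goes beyond the paper. It is a reasonable outline of Rong's argument (central extension, characteristic fiber subgroup in the hyperbolic-base case, induced surjection on orbifold fundamental groups, then the geometric realization step), and you correctly flag that the realization/classification step is the technical heart that belongs in \cite{Ro2} rather than here. One caveat: your description of the collapsed orbifold piece as arising from $\mathbb{Z}/q * \mathbb{Z}/r \twoheadrightarrow \mathbb{Z}/qr$ is tailored to the homology-sphere situation relevant to this paper rather than to the general statement of the theorem, so it would not serve as a proof of the result as stated; but since the paper only uses the theorem in that specialized setting, this is harmless for the present purposes.
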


\begin{proof}[Proof of Proposition~\ref{prop:classificationpinch}]
Let $f:Y' \to Y$ be a degree one map between ISHS's.  We write the unnormalized Seifert invariants of $Y'$ as $\{\frac{q'_1}{q_1},\ldots,\frac{q'_k}{q_k}\}$. By Theorem~\ref{the:rorong}, $f$ is a composition of vertical pinches. We now apply \cite[Theorem 3.2]{Ro1} which states that there are unnormalized Seifert invariants $\{\frac{p'_1}{p_1},\ldots,\frac{p'_l}{p_l}\}$ for $Y$ and a partition $\{1,\ldots,k\} = I_1 \cup \ldots\cup  I_l$ such that for each $j$, 
\begin{equation}\label{eqn:pinchcondition} 
\sum_{n \in I_j} \frac{q'_n}{q_n} = \frac{p'_j}{p_j}, \ \lcm_{n \in I_j}\{q_n\} = p_j.
\end{equation}  
Since $q_i$ (respectively $p_i$) are relatively prime, it is straightforward to check that Condition~\eqref{eqn:pinchcondition} implies that $p_j = \prod_{n \in I_j} q_n$.  Therefore, the relation between $Y'$ and $Y$ is as in the statement of the proposition.       
\end{proof}

Now suppose that $f:Y' \to Y$ is a fiber-preserving branched cover between ISHS's.  Fix unnormalized Seifert invariants for $Y'$ and $Y$ as $\{\frac{q'_1}{q_1},\ldots,\frac{q'_k}{q_k}\}$ and $\{\frac{p'_1}{p_1},\ldots,\frac{p'_l}{p_l}\}$ respectively, arranging that $p_j \geq 2$ and $\frac{p'_j}{p_j} \neq 0$ for each $j$.  
We use $S'$ and $S$ to denote the base orbifolds of $Y'$ and $Y$ respectively. The orders of the orbifold points correspond to the multiplicities of the singular fibers of the Seifert fibered space sitting over it.  Since $f$ is fiber-preserving, we obtain an induced map between the base orbifolds $F: S' \to S$.  We call the degree of $F$ the {\em orbifold degree} of $f$.  Fix a regular fiber, $h'$, of $Y'$ which $f$ maps to a regular fiber, $h$, in $Y$, not contained in the branch set of $f$.  The degree of $f|_{h'}:h' \to h$ is called the {\em fiber degree of $f$}.  It follows that the degree of $f$ is the product of the fiber degree and the orbifold degree.  While there are many fiber-preserving branched covers between Seifert fibered spaces with arbitrary orbifold degree, we will see this does not happen for ISHS's.  
The following argument was shown to us by Ian Agol.  We point out that the ideas are also similar to those used in the proof of \cite[Lemma 2.1]{Hu}.   

\begin{proposition}\label{prop:degreeisfiberdegree}
Let $f:Y' \to Y$ be a fiber-preserving branched cover between ISHS's.  Then, the fiber degree of $f$ equals the degree of $f$.
\end{proposition}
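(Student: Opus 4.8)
The plan is to prove that the orbifold degree of $f$ equals $1$; since the text has already observed that $\deg f$ is the product of the fiber degree and the orbifold degree, this is exactly what is needed. Write $Y = \Sigma(p_1,\ldots,p_l)$ and $Y' = \Sigma(q_1,\ldots,q_k)$, let $F\colon S' \to S$ be the induced map of base orbifolds, and let $d$ denote its degree (the orbifold degree) and $m$ the fiber degree, so $\deg f = dm$. The underlying map of surfaces is a degree $d$ branched cover $S^2 \to S^2$, and away from the singular fibers of $Y$ and the branch locus, $f$ restricts to an honest $dm$-fold covering whose base shadow is $F$. By Riemann--Hurwitz for $F$ on the underlying spheres, $\sum_y (e_y-1) = 2(d-1)$ where $e_y$ is the local degree of $F$ at $y\in S'$; hence $d\geq 2$ forces genuine ramification of $F$, which can occur only over the cone points of $S$.

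The model case to isolate is the one where $F$ behaves as a genuine orbifold covering over the cone points, i.e.\ each preimage cone order $q_y$ divides the corresponding $p_i$ with $q_y = p_i/e_y$. Here the key point is that $Y'$ is a homology sphere, so its multiplicities $q_1,\ldots,q_k$ are pairwise relatively prime: over a single cone point $x_i$ of $S$, at most one preimage can retain an order exceeding $1$ (two ramified preimages would give two multiplicities of $Y'$ sharing a prime factor of $p_i$), and distinct cone points of $S$ contribute coprime orders automatically. Consequently $S' = S^2(q_1,\ldots,q_k)$ with each $q_j$ dividing a distinct $p_{i(j)}$, so that $\sum_j (1-\tfrac{1}{q_j}) \leq \sum_i (1-\tfrac{1}{p_i})$ and thus $\chi^{\mathrm{orb}}(S') \geq \chi^{\mathrm{orb}}(S)$. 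On the other hand, for an orbifold covering $\chi^{\mathrm{orb}}$ is multiplicative, $\chi^{\mathrm{orb}}(S') = d\,\chi^{\mathrm{orb}}(S)$, and since $Y$ is an ISHS we have $\chi^{\mathrm{orb}}(S) < 0$; hence $d\geq 2$ would give $\chi^{\mathrm{orb}}(S') = d\,\chi^{\mathrm{orb}}(S) < \chi^{\mathrm{orb}}(S) \leq \chi^{\mathrm{orb}}(S')$, a contradiction. This forces $d = 1$ in the model case.

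The main obstacle is that the actual $f$ also carries fiber-direction branching, which can \emph{raise} cone orders (as in $\Sigma(\ldots,p_l)\rightsquigarrow\Sigma(\ldots,np_l)$) or create new cone points over regular fibers, so $F$ need not literally be an orbifold covering and the clean inequality $\chi^{\mathrm{orb}}(S')\geq\chi^{\mathrm{orb}}(S)$ can fail. The work is therefore to disentangle, at each cone point, the base-direction ramification from the fiber-direction branching, using the local structure of fiber-preserving branched covers of Seifert-fibered solid tori to express each upstairs multiplicity in terms of the base ramification index and the fiber degree $m$. As a global consistency check I would use the rational Euler number, which satisfies $|e(Y)| = 1/(p_1\cdots p_l)$ and $|e(Y')| = 1/(q_1\cdots q_k)$ for homology spheres (the order of $H_1$) and transforms as $e(Y') = \tfrac{d}{m}\,e(Y)$ under a fiber-preserving branched cover, giving $m\,p_1\cdots p_l = d\,q_1\cdots q_k$. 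Once the fiber-direction branching is separated off (it contributes orbifold degree $1$), the residual base-direction cover is an orbifold covering to which the argument of the previous paragraph applies, and pairwise coprimality rules out any base ramification; combined with Proposition~\ref{fact:nonzero} this yields $d=1$ and hence that the fiber degree equals the degree.
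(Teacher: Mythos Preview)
Your outline has the right overall shape---show the orbifold degree $d$ equals $1$---but there are two genuine gaps.

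\textbf{Gap in the model case.} The key claim ``over a single cone point $x_i$ of $S$, at most one preimage can retain an order exceeding $1$'' is false when $p_i$ is composite. For example, if $p_i = 6$ and two preimages have local degrees $e = 3$ and $e = 2$ (possible when $d \geq 5$), their cone orders are $2$ and $3$, which are coprime; nothing forces them to share a prime factor of $p_i$. Consequently your inequality $\sum_j (1 - 1/q_j) \leq \sum_i (1 - 1/p_i)$, i.e.\ $\chi^{\mathrm{orb}}(S') \geq \chi^{\mathrm{orb}}(S)$, is not established. (In fact the local inequality $\sum_j (1-1/q_{ij}) \leq 1 - 1/p_i$ already fails for $p_i = 6$ with preimages of orders $2$ and $3$.) The correct statement in this direction is weaker: over $x_i$, at most one preimage can have local degree equal to $1$ (since two such preimages would both carry order $p_i$), and every other preimage has local degree at least $r_i$, the smallest prime dividing $p_i$. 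This only bounds $|B_i| \leq (d + r_i - 1)/r_i$, which is exactly what the paper uses.

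\textbf{The reduction is not carried out.} Even granting the model case, your passage from the general fiber-preserving branched cover to an orbifold covering is only a sketch: you assert that the fiber-direction branching ``contributes orbifold degree $1$'' and can be ``separated off'', leaving a residual genuine orbifold cover, but you do not construct such a factorization, and there is no reason the intermediate space would be an ISHS (so the coprimality hypothesis needed downstream could fail). The relation $m_{ij} = p_i/\gcd(d_{ij},p_i)$ mixes the base ramification $d_{ij}$ with arithmetic of $p_i$ in a way that does not cleanly factor.

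For comparison, the paper proceeds directly: assuming $d \geq 2$, it combines the Riemann--Hurwitz inequality $\sum_i |B_i| \geq (l-2)d + 2$ for the underlying sphere cover with the bound $|B_i| \leq (d + r_i - 1)/r_i$ just mentioned, and then a short case analysis on $l$ and the $r_i$ (using that an ISHS has $l \geq 3$ and excludes $(2,3,5)$) produces a contradiction. No factorization of $f$ or orbifold-Euler-characteristic multiplicativity is needed.
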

\begin{proof}

We argue by contradiction.  Therefore, we assume that the map $F: S' \to S$ on the underlying topological spaces has degree at least two, and thus is a non-trivial branched cover.  Let $d$ denote this degree.  The first claim is that the orbifold points of $S$ must be contained within the branch set.  Otherwise, there would exist an orbifold point $x \in S$ with order at least two which would lift to more than one orbifold point in $S'$ with the same order.  Therefore, $Y'$ would have two singular fibers with the same multiplicity (of at least two), contradicting $Y'$ being an integer homology sphere.   

\vspace{0.2cm}

We write $x_1,\ldots, x_l$ to denote the orbifold points of $S'$, where we write the order of $x_i$ as $p_i$.  We also let $B_i = f^{-1}(x_i)$ and write $B_i = \{a_{i1},\ldots,a_{i|B_i|}\}$.  Denote by $m_{ij}$ the order of the orbifold point $a_{ij}$ (which may be 1).  In other words, 
\[
Y' = \Sigma(q_1,\ldots,q_k) \cong \Sigma(m_{11},\ldots,m_{1|B_1|},\ldots,m_{l1},\ldots,m_{l|B_l|}).
\]
We would like to see how the $m_{ij}$, $p_i$, and $d$ are related.  Let $d_{ij}$ denote the local degree of $F$ (between the underlying topological spaces) at $a_{ij}$; clearly $\sum_j d_{ij} = d$ for each $i$.  We also have $m_{ij} = \frac{p_i}{\gcd(d_{ij},p_i)}$ for all $i$ and $j$.        

\vspace{0.2cm}
    
Fix some $1 \leq i \leq l$ and let $r_i$ denote the smallest prime which divides $p_i$.  We claim that 
\begin{equation}\label{eqn:multiplicitybounds}
|B_i| \leq \frac{d + r_i - 1}{r_i}.  
\end{equation}
This is seen as follows.  Since $Y'$ is an integer homology sphere, $\gcd(m_{ij},m_{ij'}) = 1$ for all $j, j'$.  In particular, there is at most one $j$ such that $d_{ij} = 1$.  We denote this index by $j_*$, should it exist.  For all $j \neq j_*$, some non-trivial factor of $d_{ij}$ must divide $p_i$.  Therefore, we must have that $d_{ij} \geq r_i$.  Thus, we see 
\[
d = \sum_j d_{ij} = \sum_{j \neq j_*} d_{ij} + d_{ij_*} \geq (|B_i| - 1) r_i + 1, 
\]
which gives Inequality \eqref{eqn:multiplicitybounds}.  We will apply this relation to contradict the Riemann-Hurwitz formula applied to $F$, which states \begin{equation}\label{eqn:riemannhurwitz}
\sum^l_{i = 1} |B_i| \geq (l-2)d + 2,
\end{equation}
because $S$ and $S'$ are spheres.

\vspace{0.2cm}

Without loss of generality, we may assume that $2 \leq p_1 < \ldots < p_l$.  First, suppose that $l \geq 4$.  By Inequality \eqref{eqn:multiplicitybounds}, we have 
\[
|B_1| \leq \frac{d + 1}{2}, \
|B_2| \leq \frac{d + 2}{3}, \
|B_3| \leq \frac{d + 4}{5}, \text{ and } 
|B_i| \leq \frac{d + 6}{7} \text{ for } i \geq 4.
\]
Therefore, we have 
\begin{align*}
\sum^l_{i=1} |B_i| & \leq \sum^l_{i=1} \frac{d + r_i - 1}{r_i} \\
& \leq \frac{d + 1}{2} + \frac{d + 2}{3} + \frac{d + 4}{5} + (l-3)\frac{(d + 6)}{7} \\
& = \frac{31d}{30} + \frac{59}{30} + (l-3)\frac{(d+6)}{7} \\
& = \frac{31d}{30} + \frac{59}{30} + (l-3)d + \frac{6(l-3)(1-d)}{7} \\
& = (l-2)d + 2 + \frac{d-1}{30} + \frac{6(l-3)(1-d)}{7} \\
& = (l-2)d + 2 - (d-1) \left [ \frac{6(l-3)}{7} -\frac{1}{30}\right ] \\
& < (l-2)d + 2,  
\end{align*}
since $l - 3 \geq 1$ and $d \geq 2$.  This contradicts Inequality \eqref{eqn:riemannhurwitz}.  

\vspace{0.2cm}

Therefore, we assume that $l \leq 3$.  Since $Y$ is an ISHS, we have $l=3$.  We begin with the case that $p_1 > 2$.  In this case, Inequality \eqref{eqn:multiplicitybounds} implies that for all $i$,  
\[
|B_i| \leq \frac{d + 2}{3}, 
\]
and this inequality fails to be strict for at most one $i$ (namely, equality is only possible if $i = 1$ and $p_1 = 3$, since we put the $p_i$ in increasing order).  Therefore, 
\[
\sum^l_{i=1} |B_i| < 3\frac{d+2}{3} = d + 2,   
\]
contradicting Inequality \eqref{eqn:riemannhurwitz}.  Thus, we let $p_1 = 2$.  Note that since $\pi_1(Y)$ is infinite by assumption, we cannot have $(p_1,p_2,p_3) = (2,3,5)$.  Therefore, we may assume $p_2 \geq 3$ and $p_3 \geq 7$.  We again apply Inequality \eqref{eqn:multiplicitybounds} to see
\begin{align*}
\sum |B_i| & \leq \frac{d+1}{2} + \frac{d+2}{3} + \frac{d+6}{7} \\
& = \frac{41d}{42} + \frac{85}{42} \\
& < d + 2,
\end{align*}    
contradicting Inequality \eqref{eqn:riemannhurwitz}.  
\end{proof}

\begin{proof}[Proof of Proposition~\ref{prop:classificationbranched}]
We let $g:Y' \to Y$ be a fiber-preserving branched cover between ISHS's, which necessarily has degree equal to the fiber degree by Proposition~\ref{prop:degreeisfiberdegree}.  Let $d$ denote this degree.  We suppose that $Y'$ (respectively $Y$) has unnormalized Seifert invariants $\{ \frac{q'_1}{q_1}, \ldots,\frac{q'_k}{q_k} \}$ (respectively $\{ \frac{p'_1}{p_1},\ldots,\frac{p'_l}{p_l}\}$), where we again allow $p_j = 1$.  We now apply \cite[Theorem 2.2]{Hu}, which describes how the Seifert invariants change under fiber-preserving branched covers, to see that $\{\frac{d q'_1}{q_1},\ldots,\frac{d q'_k}{q_k}\}$ also gives a set of unnormalized Seifert invariants for $Y$.  Let $d_i = \gcd(d,q_i)$.  Therefore, we may write $Y$ as $\Sigma(\frac{q_1}{d_1},\ldots,\frac{q_l}{d_l})$.  Next, \cite[Theorem 1.2]{NeumannRaymond} states that if $f:Y' \to Y$ is a fiber-preserving branched cover between closed, aspherical, oriented Seifert fibered spaces, then we have that $e(Y') = \frac{d}{d_{fib}^2}e(Y)$, where $d_{fib}$ denotes the fiber degree of $f$.  In our current setting, $d_{fib} = d$, which gives $e(Y') = \frac{e(Y)}{d}$.  For a Seifert homology sphere, we have $e(\Sigma(r_1,\ldots,r_m)) = -\frac{1}{r_1 \cdots r_m}$.  We therefore see that 
\[
d \cdot \prod \frac{q_i}{d_i} = q_1 \cdots q_l.  
\]
In particular, we have $d = d_1 \cdots d_l$.  In other words, $Y$ can be obtained from $Y'$ by a sequence of the moves described in the statement of the proposition and the degree of the map is as predicted.  
\end{proof}

\section{Discussion and further questions}
We now speculate about some possible generalizations of Theorem~\ref{thm:monotonicity} and Theorem~\ref{thm:nonzeroexist}. 

\subsection{Botany} Computational evidence suggests that Theorem~\ref{thm:monotonicity} should hold for $\HFhat$.  Therefore, we state this as a conjecture.  
\begin{conjecture}\label{conj:hatmonotonicity}  
Suppose that $(p_1,\ldots,p_l) \leq (q_1,\ldots,q_l)$.  Then, 
\[
\rank \HFhat(\Sigma(p_1,\ldots,p_l)) \leq \rank \HFhat(\Sigma(q_1,\ldots,q_l)).  
\] 
\end{conjecture}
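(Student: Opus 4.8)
The plan is to imitate the proof of Theorem~\ref{thm:monotonicity}, but to strengthen its combinatorial conclusion from an immersion to a genuine embedding. The reason such a strengthening is forced is visible in Proposition~\ref{prop:rank}: while $\rank \mathbb{H}_{\mathrm{red}}$ is controlled by the pair $(\kappa_\Delta, \min\tau_\Delta)$ and is therefore stable under the order-relaxing morphisms of Section~\ref{s:absdel}, the rank of $\widehat{\mathbb{H}}$ equals $2c_\Delta + 1$, where $c_\Delta$ counts the valleys of the graded root. Among our morphisms, only the order-preserving ones are known to control $c_\Delta$: by Proposition~\ref{prop:hatcomp} together with Corollary~\ref{cor:embed}, an embedding $\Delta_Y \hookrightarrow \Delta_{Y'}$ yields $\rank \widehat{\mathbb{H}}(\Gamma_Y) \leq \rank \widehat{\mathbb{H}}(\Gamma_{Y'})$, hence $\rank \HFhat(Y) \leq \rank \HFhat(Y')$ via Theorem~\ref{theo:Nemethi}. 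So the goal reduces to producing, whenever $(p_1,\ldots,p_l) \leq (q_1,\ldots,q_l)$, an embedding in the sense of Definition~\ref{def:embed} of the delta sequence of $Y=\Sigma(p_1,\ldots,p_l)$ into that of $Y'=\Sigma(q_1,\ldots,q_l)$.

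I would reuse verbatim the normal-form map $\phi:G_Y \to G_{Y'}$ built in the proof of Theorem~\ref{thm:monotonicity}, extended to $Q_Y$ by $\phi(N_Y-x)=N_{Y'}-\phi(x)$. By uniqueness of normal forms (Lemma~\ref{lem:normalform}) this $\phi$ is injective, and parts~\eqref{i:theo3} and~\eqref{i:theo6} of Theorem~\ref{theo:mainmore} give $\Delta_{Y'}(\phi(z))=\Delta_Y(z)$ for all $z\in X_Y$, so Property~(\ref{defn:emb2}) of Definition~\ref{def:embed} holds with equality. The whole content of being an embedding rather than merely an immersion (Definition~\ref{def:immers}) lies in Property~(\ref{defn:emb1}): the \emph{biconditional} cross-order relation between $S_Y$ and $Q_Y$. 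The forward implication is exactly the semi-immersion property already extracted from the subadditivity Inequality~(\ref{eq:quas}). Writing $y=N_Y-\tilde{x}$ with $\tilde{x}\in S_Y$ and using $S_{Y'}\cap Q_{Y'}=\emptyset$, the missing reverse implication is precisely
\[
x + \tilde{x} \geq N_Y \quad \Longrightarrow \quad \phi(x) + \phi(\tilde{x}) \geq N_{Y'},
\]
i.e.\ a lower bound on $\phi(x)+\phi(\tilde{x})$ complementing the upper bound in Inequality~(\ref{eq:quas}).

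The hard part will be exactly this reverse inequality, and it is where the $\widehat{\mathbb{H}}$ problem genuinely departs from the reduced one. Recall from the proof of Theorem~\ref{thm:monotonicity} that $\phi$ is \emph{strictly} subadditive precisely when a partial carry occurs, namely when $p_i \leq x_i + \tilde{x}_i < q_i$ for some $i$; in that regime $\phi(x)+\phi(\tilde{x}) < \phi(x+\tilde{x})$, so the image of the negative generator indexed by $\tilde{x}$ can slide strictly to the left of the positive generator indexed by $x$. This is exactly the move that can introduce a new valley into the graded root and raise $c_\Delta$, so the naive $\phi$ need not be an embedding; this parallels the fact that Theorem~\ref{thm:pinch}, proved only through well-behaved semi-immersions, carries no $\HFhat$ statement. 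To push the argument through, one would need a genuinely new combinatorial input controlling univalent vertices: either a refinement-and-rerouting of $\phi$ on the offending pairs (those with $p_i \leq x_i+\tilde{x}_i < q_i$), in the spirit of Theorem~\ref{theo:embedd}, that is proved to create no additional leaf; or a direct counting lemma bounding the number of local minima of $\tau_Y$ in terms of $G_Y \cap [0,N_Y/2]$ and showing this count is monotone as the multiplicities grow. The branched-cover case $q_l=np_l$, where no partial carry can occur and Proposition~\ref{prop:fiberbranchedhat} already gives the clean embedding and the $\HFhat$ inequality, is consistent with this picture and suggests that it is precisely the non-multiplicative coprime increases that obstruct an order-preserving realization.
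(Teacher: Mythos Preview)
The statement you are addressing is labeled \emph{Conjecture}~\ref{conj:hatmonotonicity} in the paper, and the paper offers no proof of it; it is presented as an open problem motivated by computational evidence. So there is no ``paper's own proof'' against which to compare your proposal.

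Your proposal is not a proof either, and you are candid about this. You correctly reduce the question to upgrading the normal-form map $\phi$ from an immersion to an embedding, and you correctly isolate the missing ingredient as the reverse implication in Property~(\ref{defn:emb1}) of Definition~\ref{def:embed}, equivalently the inequality $x+\tilde{x}\geq N_Y \Rightarrow \phi(x)+\phi(\tilde{x})\geq N_{Y'}$. You then observe that partial carries $p_i\leq x_i+\tilde{x}_i<q_i$ make $\phi$ strictly subadditive, so the naive $\phi$ need not be an embedding, and you conclude that ``a genuinely new combinatorial input'' is required. That is an accurate diagnosis of why the argument of Theorem~\ref{thm:monotonicity} does not immediately yield the $\HFhat$ statement, and it matches the paper's own stance in leaving this as a conjecture. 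But no such input is supplied, so the proposal remains a strategy sketch rather than a proof; in particular neither of the two suggested routes (a refinement-and-rerouting argument, or a direct leaf-counting monotonicity lemma) is carried out.
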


Assuming Conjecture~\ref{conj:hatmonotonicity}, it is in fact easy to see that the above inequality can be made  strict if at least one $q_i$ is much larger than $p_i$ by pushing the argument in the proof of Proposition~\ref{prop:fiberbranchedhat}.  Therefore, Conjecture~\ref{conj:hatmonotonicity} would imply that there are only finitely many Seifert homology spheres with a fixed rank of $\HFhat$.  A similar argument as in the proof of Theorem~\ref{thm:botany} would then guarantee an algorithmic solution to the botany problem for the hat-flavor of the Heegaard Floer homology of Seifert homology spheres. 

\begin{conjecture}\label{conj:botany} 
Given $n\geq 1$, there are at most finitely many Seifert homology spheres $Y$ such that $\rank \HFhat (Y)=n$. Moreover, there is an algorithm to find all such $Y$. 
\end{conjecture}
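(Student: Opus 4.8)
The plan is to mirror the proof of Theorem~\ref{thm:botany}, working throughout under the assumption of Conjecture~\ref{conj:hatmonotonicity} (which I am permitted to assume), and to reduce the finiteness statement to two growth estimates for $\rank \HFhat$: one controlling the multiplicities of the singular fibers for a fixed number of fibers, and one controlling the number of fibers itself. Once both bounds are in place, only finitely many pairwise relatively prime tuples $(p_1,\ldots,p_l)$ can satisfy $\rank \HFhat(\Sigma(p_1,\ldots,p_l)) = n$, and the algorithmic half of the statement follows immediately: one runs N\'emethi's algorithm from Section~\ref{s:Calc} (via Theorem~\ref{theo:Nemethi} and Proposition~\ref{prop:rank}) on each of the finitely many candidate tuples and retains those whose graded root has exactly $n$ as the rank of $\widehat{\mathbb{H}}$.

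First I would bound the multiplicities. Fix $l$ and suppose some multiplicity is very large. By Conjecture~\ref{conj:hatmonotonicity} together with the embedding constructed in the proof of Proposition~\ref{prop:fiberbranchedhat}, the branched-cover operation $\Sigma(\ldots,p_l)\to\Sigma(\ldots,np_l)$ can only increase $\rank\HFhat$, and in fact increases it strictly once $n$ is large, since the $n$ disjoint images used there force the associated delta subsequence, and hence the graded root, to acquire strictly more univalent vertices. As $\rank\HFhat = 2c_\Delta+1$ counts exactly these leaves by Proposition~\ref{prop:rank}, this yields a bound $\max\{p_1,\ldots,p_l\} < C(n,l)$ for each fixed $l$.

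Second, and this is the crux, I would bound $l$ itself. By Conjecture~\ref{conj:hatmonotonicity} the componentwise-minimal pairwise relatively prime $l$-tuple --- concretely $\Sigma$ on the first $l$ primes --- realizes the smallest value of $\rank\HFhat$ among all Seifert homology spheres with $l$ singular fibers, so it suffices to show that this minimal value tends to infinity with $l$. Equivalently, one must produce a lower bound, growing in $l$, on the number of leaves of $\Gamma_{\tau_Y}$, that is, on the number $c_\Delta$ of negative-to-positive transitions of $\Delta_Y$ on $[0,N_Y]$. Here I would exploit Theorem~\ref{theo:mainmore}: part~\eqref{i:theo4} shows $\Delta_Y$ ranges over $[-(l-2),l-2]$, parts~\eqref{i:theo5} and~\eqref{i:theo6} locate its positive values and compute them, and part~\eqref{i:theo3} supplies the symmetry $\Delta_Y(n)=-\Delta_Y(N_Y-n)$; the aim is to show that reaching the extreme values $\pm(l-2)$ forces on the order of $l$ many sign changes in the staircase of $\Delta_Y$.

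The hardest part will be precisely this last lower bound on the leaf count in terms of $l$. The $\HFred$ toolkit offers no shortcut: the vertical-pinch inequality (Theorem~\ref{thm:pinch}) is proved through well-behaved semi-immersions, and the framework of Section~\ref{s:absdel} deliberately relinquishes the $\widehat{\mathbb{H}}$ inequality for such morphisms, so no pinch-type operation is available to reduce the number of singular fibers for the hat flavor. Consequently the bound on $l$ cannot be imported from the $\HFred$ argument and must instead come from a direct combinatorial analysis of the semigroup $G_Y$ and the behavior of $\Delta_Y$ near its extremes. Once this estimate is secured, the two bounds confine the search to a finite, explicit set of tuples, and the proof concludes exactly as in Theorem~\ref{thm:botany}, with the computation on each surviving tuple carried out algorithmically by N\'emethi's method.
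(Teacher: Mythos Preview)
The statement is a \emph{conjecture}; the paper does not prove it. What the paper offers is the one-paragraph heuristic immediately preceding Conjecture~\ref{conj:botany}: assume Conjecture~\ref{conj:hatmonotonicity}, sharpen Proposition~\ref{prop:fiberbranchedhat} to a strict inequality when one multiplicity is much larger, deduce finiteness, and then run an argument ``similar to the proof of Theorem~\ref{thm:botany}'' for the algorithm. Your proposal follows exactly this outline and is simply more explicit about each step, so at the level of overall strategy you are aligned with the paper.

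Where you go further than the paper is in flagging the bound on $l$ as the real obstacle, and your diagnosis is correct. Theorem~\ref{thm:botany} bounds $l$ for $\HFred$ by chaining the pinch inequality (Theorem~\ref{thm:pinch}) with the branched-cover inequality (Theorem~\ref{thm:fiberbranched}); but Theorem~\ref{thm:pinch} is proved via well-behaved semi-immersions, for which the paper explicitly gives up any $\widehat{\mathbb{H}}$-inequality. The paper's sketch does not say how $l$ would be bounded for $\HFhat$; it asserts finiteness without isolating this step. So you are not missing an argument that the paper supplies --- you are identifying a point the paper's heuristic glosses over.

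One remark on your proposed fix: you need not abandon the embedding machinery entirely. The map $z\mapsto p_l z$ gives an honest embedding of the delta sequence of $\Sigma(p_1,\ldots,p_{l-1})$ into that of $\Sigma(p_1,\ldots,p_l)$ (check that it sends $S_Y\to S_{Y'}$, $Q_Y\to Q_{Y'}$, is order-preserving, and preserves $\Delta$ via Theorem~\ref{theo:mainmore}\eqref{i:theo6}), so Corollary~\ref{cor:embed} yields $\rank\HFhat(\Sigma(p_1,\ldots,p_{l-1}))\le\rank\HFhat(\Sigma(p_1,\ldots,p_l))$ unconditionally. This reduces the problem to showing the rank of the minimal $l$-tuple is \emph{strictly} increasing in $l$ infinitely often, which is closer to the ``push the proof of Proposition~\ref{prop:fiberbranchedhat}'' idea the paper gestures at than to a from-scratch combinatorial leaf count. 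Either way, the residual work you describe is genuine and is not carried out in the paper.
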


\subsection{Non-zero degree maps} As pointed out in the introduction, we did not directly make use of the branched covers or vertical pinches in the proofs of Theorem \ref{thm:fiberbranched} and  Theorem~\ref{thm:pinch}.  It would be interesting to see the roles of these maps in the rank inequalities.

\begin{problem}
Prove Theorem~\ref{thm:fiberbranched} and  Theorem~\ref{thm:pinch} by directly using the corresponding maps.
\end{problem}

Proving Theorem~\ref{thm:fiberbranched} and  Theorem~\ref{thm:pinch} this way could also yield a generalization of Theorem~\ref{thm:nonzeroexist}. Now, we would like to ask about  possible generalizations of Theorem~\ref{thm:nonzeroexist}.  First, we point out that the statement is too strong to generalize to all three-manifolds.  One example can be seen by taking $Y = \Sigma_g \times S^1$, the product of a genus $g$ surface with a circle, for $g \geq 3$ (which is a Seifert fibered space).  Since $Y$ covers itself non-trivially by a degree $k$ map $f_k$, for any $k$, an analogue of Theorem~\ref{thm:nonzeroexist} would imply that $\rank \HFred(\Sigma_g \times S^1) = 0$.  However, this has been computed to be non-trivial \cite{hfk,jabukamark}.  We cannot have an analogous inequality even if we restrict to individual Spin$^c$ structures.  By choosing $\spinc \in \text{Spin}^c(\Sigma_g \times S^1)$ such that $\rank\HFred(\Sigma_g \times X^1,\spinc) \neq 0$, we see that $|k| \rank\HFred(\Sigma_g \times S^1,\spinc) \leq \rank\HFred(\Sigma_g \times S^1,f_k^*\spinc)$ cannot hold for $|k| \gg 0$, since for all $k$, $\rank\HFred(\Sigma_g \times S^1, f_k^*\spinc) \leq \rank \HFred(\Sigma_g \times S^1)$, the latter of which is finite.  It also seems unlikely that this is something special to having non-trivial first homology.  We expect that there are self-maps between integer homology spheres with non-trivial $\HFred$ which have $\mathrm{deg} \geq 2$ .  Since there is strong evidence that all aspherical integer homology spheres have non-trivial $\HFred$, again, the inequality in Theorem~\ref{thm:nonzeroexist} still seems very unlikely.   Therefore, we instead propose a weaker inequality.

\begin{conjecture}
If $f: Y' \to Y$ is a non-zero degree map between integer homology spheres, then 
\begin{align*}
&\rank \HFred(Y') \geq \rank \HFred(Y), \\
&\rank \HFhat(Y') \geq \rank \HFhat(Y).  
\end{align*}
\end{conjecture}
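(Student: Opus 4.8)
The plan is to break the general statement into two geometrically distinct pieces and attack each with a separate Floer-theoretic input, mirroring the split between branched covers (Theorem~\ref{thm:fiberbranched}) and vertical pinches (Theorem~\ref{thm:pinch}) that drives the Seifert case. First I would invoke the structure theory for non-zero degree maps available after geometrization (in the spirit of the Wang/Rong/Hu results used in Section~\ref{sec:nonzerodegree}): up to homotopy, a non-zero degree map $f:Y'\to Y$ of integer homology spheres can be arranged to factor through intermediate homology spheres so that each elementary factor is either a finite covering or a degree-one map, and on each JSJ piece $f$ restricts accordingly. Since the two inequalities are multiplicative under composition, it then suffices to prove $\rank\HFred(Y')\ge\rank\HFred(Y)$ and $\rank\HFhat(Y')\ge\rank\HFhat(Y)$ in the two cases (i) $f$ a covering and (ii) $f$ degree one.

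For case (i) the natural mechanism is a transfer, or more robustly an equivariant refinement of Heegaard Floer homology together with a Smith-type localization inequality, exactly the strategy carried out for the covering maps treated in \cite{LM,LT}. Concretely, for a finite cover $Y'\to Y$ I would compare the Borel-equivariant theory of $Y'$ with the fixed-point data recorded by $Y$; a localization bound would yield that $\rank\HFred(Y')$ (and $\rank\HFhat(Y')$) dominates the corresponding rank of $Y$, consistent with and in fact stronger than the numerical gain already seen in Theorem~\ref{thm:fiberbranched}.

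For case (ii) I would exploit that a degree-one map makes $Y'$ a domination of $Y$: Proposition~\ref{fact:nonzero} gives $\pi_1(Y')\twoheadrightarrow\pi_1(Y)$, and geometrically one expects $Y'$ to be assembled from $Y$ by splicing homology-sphere pieces along incompressible tori, the general analogue of a vertical pinch. The target is a splicing/gluing inequality asserting that $\rank\HFred$ does not decrease under such satellite-type operations, which would specialize to Theorem~\ref{thm:pinch} in the Seifert setting. A candidate route is to represent the degree-one collapse by a cobordism $W$ (the trace of the surgeries realizing the splicing) and to prove that the induced cobordism map is surjective onto the image of $HF^\infty$, thereby forcing the rank of $\HFred(Y)$ to be bounded by that of $\HFred(Y')$.

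The hardest part will be case (ii). Heegaard Floer homology is functorial under four-dimensional cobordisms rather than three-dimensional maps, so there is no ready-made homomorphism realizing the inequality; in the covering case equivariant functoriality and localization can stand in for this, but a genuine degree-one collapse forces one to understand $HF$ under gluing along tori, where no general rank-monotonicity formula is known. Making the required surjectivity onto the image of $HF^\infty$ precise, and controlling the bordered/gluing bookkeeping across an arbitrary JSJ decomposition, is the central obstacle, and is precisely why the statement is posed only as a conjecture here.
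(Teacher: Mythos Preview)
The statement you are addressing is a \emph{conjecture}, not a theorem: the paper poses it in the final discussion section and offers no proof. So there is nothing in the paper to compare your argument against, and to be clear, what you have written is not a proof either---it is a programmatic outline, and you yourself concede in the last paragraph that the central step is an unresolved obstacle.

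That said, several of the earlier steps in your outline are also not established and deserve to be flagged as gaps rather than routine reductions. First, the factorization you invoke at the outset---that any non-zero degree map between integer homology spheres decomposes, up to homotopy and through homology-sphere intermediates, into finite coverings and degree-one maps---is exactly the content of Rong's theorem \emph{for Seifert fibered spaces}; no such structure result is known for general (e.g.\ hyperbolic or mixed JSJ) integer homology spheres, and the standard lift to the cover associated to $f_*\pi_1(Y')$ neither lands in a homology sphere nor produces a degree-one residual map in general. Second, the Smith-type localization inequalities you cite for case (i) are presently available only for rather restricted covers (essentially $\mathbb{Z}/p$-actions in the works you allude to), not for arbitrary finite covers of arbitrary $3$-manifolds. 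Third, your case (ii) sketch---build a cobordism from the splice data and argue surjectivity onto the image of $HF^\infty$---is a reasonable heuristic, but no such surjectivity or rank-monotonicity under torus gluing is known, which is precisely why the paper leaves the statement as a conjecture. In short: your plan identifies the right shape of an attack, but every one of its three load-bearing inputs is itself an open problem.
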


\vspace{0.2cm}     

\subsection{Correction terms} It is also natural to expect that there should be inequalities for the correction terms of Seifert homology spheres analogous to those in Theorems~\ref{thm:fiberbranched} and \ref{thm:monotonicity}.  However, it turns out that this is not the case.  The following calculation shows neither the $d$ invariant nor $-d$ satisfies the inequality in Theorems~\ref{thm:fiberbranched} and \ref{thm:monotonicity}.
$$d(-\Sigma (3,5,7))=-2> -4 = d (-\Sigma (3,5,14)) < -2 = d (-\Sigma(3,5,28)).$$

\noindent The correction term does not satisfy the inequality in Theorem~\ref{thm:pinch}  either, as the following observation shows.
$$d (-\Sigma (2,5,21))=0 > -2 = d(- \Sigma (2,3,5,7)) > -4 = d(-\Sigma(3,5,14)).$$
\noindent We therefore pose it as a question.
	
\begin{question}	What is the relationship between the correction terms in the settings of Theorem~\ref{thm:fiberbranched}, Theorem~\ref{thm:pinch}, and Theorem~\ref{thm:monotonicity} 
\end{question}

\bibliography{References}

\begin{thebibliography}{10}

\bibitem{BR}
Matthias Beck and Sinai Robins.
\newblock {\em Computing the continuous discretely}.
\newblock Undergraduate Texts in Mathematics. Springer, New York, 2007.
\newblock Integer-point enumeration in polyhedra.

\bibitem{CK}
Mahir~B. Can and {\c{C}}a\u{g}r\i\; Karakurt.
\newblock Calculating {H}eegaard {F}loer homology by counting lattice points in
  tetrahedra.
\newblock Preprint, arXiv:1211.4934, 2012.

\bibitem{FS}
Ronald Fintushel and Ronald~J. Stern.
\newblock Instanton homology of {S}eifert fibred homology three spheres.
\newblock {\em Proc. London Math. Soc. (3)}, 61(1):109--137, 1990.

\bibitem{FMK}
Shinji Fukuhara, Yukio Matsumoto, and Koichi Sakamoto.
\newblock Casson's invariant of {S}eifert homology {$3$}-spheres.
\newblock {\em Math. Ann.}, 287(2):275--285, 1990.

\bibitem{Hu}
Hong Huang.
\newblock Branched coverings and nonzero degree maps between {S}eifert
  manifolds.
\newblock {\em Proc. Amer. Math. Soc.}, 130(8):2443--2449 (electronic), 2002.

\bibitem{jabukamark}
Stanislav Jabuka and Thomas~E. Mark.
\newblock On the {H}eegaard {F}loer homology of a surface times a circle.
\newblock {\em Adv. Math.}, 218(3):728--761, 2008.

\bibitem{LM}
Tye Lidman and Ciprian Manolescu.
\newblock Monopole {F}loer homology and covering spaces.
\newblock In preparation.

\bibitem{LT}
Robert Lipshitz and David Treumann.
\newblock Noncommutative {H}odge-to-de {R}ham spectral sequence and the
  {H}eegaard {F}loer homology of double covers.
\newblock Preprint, arXiv:1203.2963, 2012.

\bibitem{MOS}
Ciprian Manolescu, Peter Ozsv{\'a}th, and Sucharit Sarkar.
\newblock A combinatorial description of knot {F}loer homology.
\newblock {\em Ann. of Math. (2)}, 169(2):633--660, 2009.

\bibitem{MOST}
Ciprian Manolescu, Peter Ozsv{\'a}th, Zolt{\'a}n Szab{\'o}, and Dylan Thurston.
\newblock On combinatorial link {F}loer homology.
\newblock {\em Geom. Topol.}, 11:2339--2412, 2007.

\bibitem{MOT}
Ciprian Manolescu, Peter Ozsv{\'a}th, and Dylan Thurston.
\newblock Grid diagrams and {H}eegaard {F}loer invariants. ar{X}iv:0910.0078.

\bibitem{N}
Andr{\'a}s N{\'e}methi.
\newblock On the {O}zsv\'ath-{S}zab\'o invariant of negative definite plumbed
  3-manifolds.
\newblock {\em Geom. Topol.}, 9:991--1042, 2005.

\bibitem{NN}
Andr{\'a}s N{\'e}methi and Liviu~I. Nicolaescu.
\newblock Seiberg-{W}itten invariants and surface singularities.
\newblock {\em Geom. Topol.}, 6:269--328 (electronic), 2002.

\bibitem{NeumannRaymond}
Walter~D. Neumann and Frank Raymond.
\newblock Seifert manifolds, plumbing, {$\mu $}-invariant and orientation
  reversing maps.
\newblock In {\em Algebraic and geometric topology ({P}roc. {S}ympos., {U}niv.
  {C}alifornia, {S}anta {B}arbara, {C}alif., 1977)}, volume 664 of {\em Lecture
  Notes in Math.}, pages 163--196. Springer, Berlin, 1978.

\bibitem{NW}
Walter~D. Neumann and Jonathan Wahl.
\newblock Casson invariant of links of singularities.
\newblock {\em Comment. Math. Helv.}, 65(1):58--78, 1990.

\bibitem{OS2}
Peter Ozsv{\'a}th and Zolt{\'a}n Szab{\'o}.
\newblock Absolutely graded {F}loer homologies and intersection forms for
  four-manifolds with boundary.
\newblock {\em Adv. Math.}, 173(2):179--261, 2003.

\bibitem{OS1}
Peter Ozsv{\'a}th and Zolt{\'a}n Szab{\'o}.
\newblock On the {F}loer homology of plumbed three-manifolds.
\newblock {\em Geom. Topol.}, 7:185--224 (electronic), 2003.

\bibitem{hfk}
Peter Ozsv{\'a}th and Zolt{\'a}n Szab{\'o}.
\newblock Holomorphic disks and knot invariants.
\newblock {\em Adv. Math.}, 186(1):58--116, 2004.

\bibitem{OS5}
Peter Ozsv{\'a}th and Zolt{\'a}n Szab{\'o}.
\newblock Holomorphic disks and three-manifold invariants: properties and
  applications.
\newblock {\em Ann. of Math. (2)}, 159(3):1159--1245, 2004.

\bibitem{OS4}
Peter Ozsv{\'a}th and Zolt{\'a}n Szab{\'o}.
\newblock Holomorphic disks and topological invariants for closed
  three-manifolds.
\newblock {\em Ann. of Math. (2)}, 159(3):1027--1158, 2004.

\bibitem{OS3}
Peter Ozsv{\'a}th and Zolt{\'a}n Szab{\'o}.
\newblock Holomorphic triangle invariants and the topology of symplectic
  four-manifolds.
\newblock {\em Duke Math. J.}, 121(1):1--34, 2004.

\bibitem{Ro2}
Yongwu Rong.
\newblock Maps between {S}eifert fibered spaces of infinite {$\pi_1$}.
\newblock {\em Pacific J. Math.}, 160(1):143--154, 1993.

\bibitem{Ro1}
Yongwu Rong.
\newblock Degree one maps of {S}eifert manifolds and a note on {S}eifert
  volume.
\newblock {\em Topology Appl.}, 64(2):191--200, 1995.

\bibitem{SW}
Sucharit Sarkar and Jiajun Wang.
\newblock An algorithm for computing some {H}eegaard {F}loer homologies.
\newblock {\em Ann. of Math. (2)}, 171(2):1213--1236, 2010.

\end{thebibliography}
\bibliographystyle{plain}

\end{document}